\tikzstyle{decision} = [diamond, draw, fill=blue!20, 
\tikzstyle{block} = [rectangle, draw, fill=blue!20, 
\tikzstyle{line} = [draw, -latex']
\tikzstyle{cloud} = [draw, ellipse,fill=red!20, node distance=3cm,
\tikzset{main node/.style={circle,fill=blue!20,draw,minimum size=1cm,inner sep=0pt},  }
\numberwithin{equation}{section}
\title{Full extremal process in four-dimensional membrane model}
\author{Hao Ge}
\address{(Hao Ge) \href{mailto:haoge@pku.edu.cn}{haoge@pku.edu.cn}, Peking University}
\author{Xinyi Li}
\address{(Xinyi Li) \href{mailto:xinyili@bicmr.pku.edu.cn}{xinyili@bicmr.pku.edu.cn}, Peking University}
\author{Jiaxi Zhao}
\address{(Jiaxi Zhao) \href{mailto:jiaxi.zhao@u.nus.edu}{jiaxi.zhao@u.nus.edu}, National University of Singapore}
\begin{document}

\begin{abstract}
	We investigate the extremal process of four-dimensional membrane models as the size of the lattice $N$ tends to infinity. 
	We prove the cluster-like geometry of the extreme points and the existence as well as the uniqueness of the extremal process. The extremal process is characterized by a distributional invariance property and a Poisson structure with explicit formulas. Our approach follows the same philosophy as the two-dimensional Gaussian free field (2D GFF) via the comparison with the modified branching random walk. 
	The proofs leverage the ``Dysonization'' technique and a careful treatment of the correlation structure, which is more intricate than the 2D GFF case. As a by-product, we also obtain a simpler proof of a crucial sprinkling lemma.
\end{abstract}

\keywords{full extremal process, membrane model, Poisson point process, Dysonization, branching random walk}

\maketitle

\section{introduction}
\subsection{The Gaussian free field and membrane model}

In the realms of physics and probability theory, the Gaussian free field (GFF, gradient model) and membrane models (Laplacian model) stand out as extensively studied lattice approximations to continuous entities. Recent years have witnessed the establishment of numerous connections between the GFF and various other statistical physics models, including Schramm-Loewner evolution \cite{schramm2009contour}, conformal loop ensemble \cite{sheffield2012conformal}, and Gaussian multiplicative chaos \cite{kahane1985chaos}. Substantial theoretical strides have been made in understanding the GFF\cite{sheffield2007gaussian, schramm2009contour, Rodriguez2012PhaseTA}. These notable advancements include the limit law of the maximum, characteristics of level sets, and a comprehensive Poisson characterization of the full extremal process of GFF in 2D \cite{biskup2019intermediate, biskup2016extreme, biskup2014conformal, biskup2018full}, which is also intimately related to Liouville quantum gravity. 

Less is known about the membrane models, which can be viewed as a distribution over the space of functions on 4D lattice $h: \mbZ^4 \rightarrow \mbR$. A discrete 4D membrane models $\lbb h_v^{V} \rbb_{v \in \mbZ^4}$ on a domain $V \subset \mbZ^4$ with Dirichlet boundary condition $h_v^{V}\Big|_{V^c} = 0$ , is a mean zero Gaussian process indexed by $\mbZ^d$ which 
takes the value $0$ on $V^c$ and has a probability distribution function (PDF) as \cite{schweiger2020maximum}
\bequ
	\mbP\lp dh_{\cdot}^{V} \rp = \frac{1}{Z_{V}}\exp\lp -\half  \sum_{v \in \mbZ^4} \lp \Delta h_v^{V} \rp^2\rp \prod_{v \in V} dh_v^{V} \prod_{v \in V^c} \delta_0\lp dh_v^{V} \rp.
\eequ 
Here, we focus our attention on the 4D membrane models on a 4-cube $V_N := \lp \lb 0, N \rb \cap \mbZ \rp^4$ of side length $N$ and denote it by $h_v^{N}$. Here the discrete Laplacian is defined as
\bequn
	\Delta h_v^{V} = \sum_{u \sim v} h_u^{V} - 4h_v^{V},
\eequn
where $u \sim v$ denotes the vertices that are adjacent to $v$.

Note that the Hamiltonian of membrane models differs from the GFF by replacing the inner product between gradient with Laplacian. Throughout the paper, we will use $\lbb h_v^{\nabla, V} \rbb_{v \in \mbZ^2}$ to denote the 2D GFF with distribution
\bequ
	\mbP\lp dh_{\cdot}^{\nabla, V} \rp = \frac{1}{Z_{V}^{\nabla}}\exp\lp -\half  \sum_{v \in \mbZ^2} \lp \nabla h_v^{\nabla, V} \rp^2\rp \prod_{v \in V} dh_v^{V} \prod_{v \in V^c} \delta_0\lp dh_v^{V} \rp.
\eequ
Here the discrete gradient is defined as the following 
\bequn
	\nabla h_v^{\nabla, V} = \lp h_u^{\nabla, V} - h_v^{\nabla, V} \rp_{{u \sim v}}^T,
\eequn
where $u \sim v$ denotes that vertex $u$ is adjacent to $v$.

The work \cite{kurt2009maximum} establishes the entropy repulsion in the 4D 
membrane models and the expectation of the maximum of the field in square lattices $V_N := [0,N]^4 \cap \mbZ^4$, i.e.
\begin{equation}
	m_N := \max_{x\in V_N} h_v \sim 2\sqrt{2\gamma}\log N,
\end{equation}
with $\gamma = 8/\pi^2$ satisfies that the asymptotic of the Green
function with respect to the 4D membrane model satisfies $G^{V_N}(x, x) \sim \gamma \log N + O(1)$
as $N \rightarrow \infty$ for $x$ “deep” inside $V_N$.

The tightness of maximum sequence with respect to $N$ and the limiting law is proved in \cite{schweiger2020maximum} by leveraging a general criterion of 
log-correlated field settled in \cite{ding2017convergence}. More recently,  \cite{li2022intermediate} established the scaling limit of the intermediate level sets of the membrane models using second-moment 
calculation, parallel to  similar results on GFF\cite{biskup2019intermediate}.

With the rich structure of the extremal process of GFF in mind, a natural question arises: \textit{Does a similar full extremal process exist in the context of the 4D membrane model?} In this work, we provide 
a positive answer to the above question by establishing the convergence of the extremal process of the 4D membrane models. 


\subsection{Main results}\label{sec:Mainresults}
We consider the associated extremal process depending on $N, r$ with support on $[0, 1]^4 \times \mbR$ of 4D membrane models
\bequ\label{equ:def-extremal-process}
	\eta_{N, r}:= \sum_{x \in V_N} \mathbf{1}_{h_x = \max_{z \in \Lambda_r(x)} h_z} \delta_{x/N} \otimes \delta_{h_x- m_N},
\eequ
where $\Lambda_r(x):=\{z\in\mbZ^2: \lv z - x \rv_1 \leq r\}$. 
This defines a random measure on $[0, 1]^4 \times \mbR$ given explicitly by
\bequ
	\eta_{N, r}(A \times B) = \sum_{x \in V_N} \mathbf{1}_{h_x = \max_{z \in \Lambda_r(x)} h_z} \mathbf{1}_{x/N \in A} \mathbf{1}_{h_x- m_N \in B}, \quad A \subset [0, 1]^4, B \subset \mbR.
\eequ
Given a function $f \in C_c\lp \lb 0, 1 \rb^4 \times \mbR \rp$, we define the inner product between $\eta_{N, r}$ and $f$ as 
\bequn
	\la \eta_{N, r}, f \ra := \sum_{x \in V_N} f(x/N, h_x-m_N).
\eequn
The main result of this paper is the following characterization of the extremal process as a Poisson point process.
\begin{Thm}[Poisson structure of limit extremal process]\label{thm:Poisson-limit}
There is a unique random Borel measure $Z (dx)$ on $[0, 1]^4$ satisfying $Z([0, 1]^4) < \infty$ a.s.\ and $Z(A) > 0$ 
a.s.\ for any open set $A \subset [0,1]^4$ such that $\forall r_N \rightarrow \infty$, any weak subsequential limit $\eta$ 
of the processes $\eta_{N, r_N}, N = 1, 2, \cdots$ takes the form of a PPP, i.e.
\bequ\label{equ:poisson-limit}
	\eta_{N, r_N} \rightarrow \eta\overset{\text{law}}{=} \PPP\lp Z\lp dx \rp \otimes e^{-\pi t} dt\rp.
\eequ
\end{Thm}

The key ingredients of the proof are precise analysis of the geometry of the maxima of 4D membrane models, akin to the approach used for the Gaussian Free Field (GFF)\cite{ding2014extreme}. We employ the critical covariance structure and the technique of distributional invariance following the methodology of \cite{biskup2016extreme}. The geometry of the near maxima landscape of the 4D membrane model is stated below.
\begin{Thm}[Geometry of the set of near maxima for 4D membrane model]\label{thm:4Dm-geo}
		There exists an absolute constant $c > 0$ such that
		\bequ
			\begin{aligned}
				\lim_{r \rightarrow \infty}\lim_{N \rightarrow \infty} \mbP\lp  \exists v, u \in V_N: r \leq \lv v - u \rv \leq \frac{N}{r}, \   h_u^{N}, h_v^{N} \geq m_N- c\log\log r  \rp = 0.
			\end{aligned}
		\eequ
\end{Thm}

This theorem arises from an in-depth analysis of the covariance structure of the 4D membrane model (see \Cref{prop:4D-cov}). The covariance matrix of this model is represented by the discrete Green's function associated with the bi-Laplacian operator. Unlike the Green's function for the Gaussian Free Field (GFF), which has explicit interpretations in discrete harmonic analysis and potential theory \cite{lawler2013intersections}, the Green's function for the 4D membrane model lacks a probabilistic interpretation. This discrepancy presents a significant challenge in characterizing the extremal process limit for 4D membrane models.

We addressed this challenge through a detailed analysis of the field correlation structure, as demonstrated in \Cref{lem:sprinkling}, referred to as the sprinkling lemma. While the sprinkling lemma for GFF was initially established in \cite{ding2014extreme}, we offer a simpler proof in this paper. 

The geometry interpretation of \Cref{thm:4Dm-geo} can be summarized as the sites of a membrane that attains the high value that is only smaller than $m_N$ by a constant are cluster-like, i.e. 
two points in this set are either close to each other ($r > \lv v - u \rv$) or far from each other 
($\lv v - u \rv > \frac{N}{r}$). Based on this theorem, one can derive an important property of the limiting measure of the extremal process, i.e., invariance under Dysonization. 
\begin{Thm}[Invariance under Dysonization]\label{thm:dysoninv}
	For any subsequential limit $\eta$ and any function $f \in C_c\lp \lb 0, 1 \rb^4 \times \mbR \rp$, we have 
	\bequ\label{equ:inveq}
		\mbE\lb e^{-\la \eta, f \ra}\rb = \mbE\lb e^{-\la \eta, f_t \ra}\rb, \quad \forall t > 0,
	\eequ
	where $f_t$ is defined as
	\bequ\label{f_t}
		f_t(x, h) = -\log \mbE\lb e^{-f\lp x, h + W_t - \frac{\pi t}{2} \rp} \rb, \quad x \in \lb 0, 1 \rb^4, h \in \mbR,
	\eequ
	and the expectation is taken w.r.t.\ the standard Brownian motion $W_t$.
\end{Thm}
This theorem contains the most lengthy proof of this work, which uses both the tightness results and a framework  outlined by \cite{biskup2016extreme} to 
prove the invariance under Dysonization. The tightness results rely on a delicate analysis
of the tail behavior of several maxima summation statistics of the 4D membrane model which is obtained in \Cref{sec:mBRW} by 
the comparison theorem between 4D membrane model and the modified branching random walk (MBRW). Moreover, the characteristic of the geometry of the field also comes
into the proof of the Dysonization invariance property.

Then we use this invariant property under ``Dysonization'' to extract the Poisson law that the limiting extremal process will satisfy:
\begin{Thm}\label{thm:ppp}
	Suppose $\eta$ is a point process on $[0, 1]^4 \times \mbR$ such that \eqref{equ:inveq} holds for some $t > 0$ 
	and all continuous $f : [0, 1]^4 \times \mbR \rightarrow [0, \infty)$ with compact support. Assume also that 
	$\eta([0, 1]^4 \times [0, \infty)) < \infty$ and $\eta([0, 1]^4 \times \mbR) > 0$ a.s. Then there is a 
	random Borel measure $Z$ on $[0, 1]^4$, satisfying $Z ([0, 1]^4) \in (0, \infty)$ a.s., such that
	\begin{equation}
		\eta \overset{\text{law}}{=} \PPP\lp Z\lp dx \rp \otimes e^{-\pi h} dh \rp.
	\end{equation}
\end{Thm} 
The proof follows the same routine as in 
\cite{biskup2016extreme}, which applies the technique developed by Liggett \cite{liggett1978random} on interacting particle systems and 
invariant measure of Markov chains. So the proof is omitted here. For 
readers interested in the argument, we refer to Theorem 3.2 and Section 3.2 in \cite{biskup2016extreme}. 

Thanks to Theorem \ref{thm:ppp}, the uniqueness of the Borel measure $Z$ is the only remaining piece for Theorem \ref{thm:Poisson-limit}. 
To prove this, we follow the framework outlined by \cite{biskup2016extreme} to prove the uniqueness of its Laplace transformation.
This amounts to the construction of an auxiliary field associated with the 4D membrane field and analyze its covariance structure.

Among all the ingredients of the proof, the proof of \Cref{thm:ppp} is relatively standard compared with \Cref{thm:4Dm-geo} and \Cref{thm:dysoninv}. Compared with the GFF case, 
the proof of the extremal process of 4D membrane model is more intricate due to the more complicated covariance structure while the overall strategy shares the same structure. Therefore
our main contribution lies in handling the detailed covariance structure of the 4D membrane model and the geometry of the near maxima.

These current results provide the first step toward a complete understanding of the extremal process of 4D membrane models. 
The exact characterization of the extremal processes remain open for this model. In comparison, the conformal symmetry and 
the freezing behavior of the extremal process was established in \cite{biskup2014conformal} and \cite{biskup2018full} respectively for GFF. Moreover, 
it is also 
very natural to consider its relation to the uniform spanning forests and Ising 
models in 
(the critical) dimension 4. Some relation between the biharmonic field and uniform spanning forest has already been established in \cite{sun2013uniform}.

The paper is organized as follows: we review some established results on 4D membrane models in \Cref{sec:literature}. To precisely estimate the extreme statistics and extremal processes of 4D membrane model, we leverage the previously introduced modified branching random walk (MBRW) for finer approximation of the 4D membrane model. We combine the estimate of the MBRW along with comparison theorems of Gaussian processes to obtain control on the 4D membrane models in \Cref{sec:mBRW} and present the complete 
proof in \Cref{sec:proof}. 

\section{Basic facts and established results on 4D membrane model}\label{sec:literature}

In this section, we review some basic facts and properties of 4D membrane models. 

One of the most important properties that both membrane models and GFF enjoy is the Gibbs-Markov property, also known as the domain Markov property. This is a cornerstone for us to characterize the geometry of near maxima of these models and establish the limit law of the extremal process. For Gibbs-Markov property in general setting, we suggest \cite{georgii2011gibbs} for a detailed exposition.
\begin{Thm}[Gibbs-Markov property of (4D) membrane model \cite{li2022intermediate}]
	The (4D) membrane models satisfy the Gibbs-Markov property. Specifically, for any $U \subsetneq V \subsetneq \mbZ^d$, one has the following decomposition
	\bequ\label{G-M}	
		\begin{aligned}
		h_u^{V} & = \mbE \lb h_u^{V} \Big| h_v^{V}, v \in V \backslash U \rb + h_u^{U, V}, \quad h_u^{U, V} \overset{\text{law}}{=} h_u^{U}, \\ h_u^{U, V} & \perp \mbE \lb h_u^{V} \Big| h_v^{V}, v \in V \backslash U \rb,
		\end{aligned}
	\eequ
	which is exactly the $L^2$ decomposition of conditional expectation. The $h_u^{U, V}$ part has the same law as the (4D) membrane models on the domain $U$. 
\end{Thm}
\begin{remark}
By Gaussianity of the membrane model, the conditional expectation of the field in $U$, i.e. $\mbE \lb h_u^{V} \Big| h_v^{V}, v \in V \backslash U \rb$ is a linear combination of the field outside $U$, i.e., $h_v^{V}, v \in V \backslash U$. Moreover, since the bi-Laplacian operator has range $2$, one further has reduction $$\mbE \lb h_u^{V} \Big| h_v^{V}, v \in V \backslash U \rb =  \mbE \lb h_u^{V} \Big| h_v^{V}, v \in \p_2 U \rb,$$ where $\p_k U := \lbb x \in U^c, \dist\lp x, U \rp \leq k \rbb.$ Here the distance refers to graph distance. Note that (4D) membrane models differs from the GFF in the part that Laplacian operator for GFF has range $1$, thus one has  $$\mbE \lb h_u^{\nabla, V} \Big| h_v^{\nabla, V}, v \in V \backslash U \rb =  \mbE \lb h_u^{\nabla, V} \Big| h_v^{\nabla, V}, v \in \p_1 U \rb.$$
\end{remark}
We refer to \cite{kurt2009maximum} for some comments on the relation between the Gibbs-Markov decomposition property, the Green's function, and the boundary condition for the corresponding Biharmonic equation. 


Similar to GFF, the membrane model is defined in any dimension. 4D is important for this model due to its criticality, corresponding to the fact that 2D is the critical dimension of GFF. This is characterized by the 
the following proposition on the covariance of 4D membrane models, indicating that it falls in the regime of log-correlated field.
\begin{proposition}[Covariance of 4D membrane models \cite{schweiger2020maximum}]\label{prop:4D-cov}
	The covariance of the 4D membrane models $h_v$ satisfies
	\begin{itemize}
		\item
		There exists constant $\alpha_0'$ such that for $u, v \in V_N$, one has
		\bequ
			\begin{aligned}
				\Var\lp h_v^{N} \rp & \leq \frac{8}{\pi^2}\log N + \alpha_0; 		\\
				\mbE\lb \lp h_u^{N} - h_v^{N} \rp^2 \rb & \leq \frac{8}{\pi^2}\lp 2\log \lp 1 + \lv u - v \rv \rp - \lv \Var\lp h_v^{N} \rp - \Var\lp h_u^{N} \rp \rv + 4\alpha_0 \rp.
			\end{aligned}
		\eequ
		\item
		For any $\delta > 0$ there is a constant $\alpha^{\delta} > 0$ such that for all $u, v \in V_N$ with $\min\lp d_N\lp u \rp, d_N\lp v \rp \rp \geq \delta N$, one has
		\bequ\label{far-bound}
			\lv \Cov\lp h_v^{N}, h_u^{N}\rp - \log\frac{N}{1 + \lv u - v \rv} \rv \leq \alpha^{\delta}.
		\eequ 
	Here	$d_N\lp v \rp := \dist\lp v, \p V_N \rp$
		\item 
		There are a constant $\theta_0 > 0$, a continuous function $f_1: \lp 0, 1 \rp^4
		\rightarrow \mbR $ and a function $f_2: \mbZ^4 \times \mbZ^4 \rightarrow \mbR$
		such that the following holds. For all $L, \epsilon > 0, \theta > \theta_0$ there
		exists $N_0' = N_0'\lp L, \epsilon, \theta \rp$ such that for all
		$x \in \lb 0, 1 \rb^4, N \geq N_0', Nx \in \mbZ^4$ and
		$d\lp x \rp \geq \frac{\lp \log N \rp^{\theta}}{N}$ and for all
		$u, v \in \lb 0, L \rb^4 \cap \mbZ^4$, 
		\bequ
			\lv \Cov\lp h^{N}_{Nx + v}, h^{N}_{Nx + u} \rp - \frac{8}{\pi^2}\log N - f_1\lp x \rp - f_2\lp u, v \rp \rv < \epsilon.
		\eequ
		\item 
		There are a constant $\theta_1 > 0$ and a continuous function $f_3: D^4 \rightarrow \mbR$, where $D^4 := \lbb \lp x, y \rp, x, y \in \lp 0, 1 \rp^4, x \neq y \rbb$ such that the following holds: for all $L, \epsilon > 0, \theta > \theta_1$ there exists $N_1' = N_1'\lp L, \epsilon, \theta \rp$ such for all $x, y \in \lb 0, 1 \rb^4, Nx, Ny \in \mbZ^4$ and $d\lp x \rp \geq \frac{\lp \log N \rp^{\theta}}{N}$, $\min\lp d\lp x \rp, d\lp y \rp \rp \geq \frac{\lp \log N \rp^{\theta}}{N}$ and $\lv x - y \rv \geq \frac{1}{L}$ we have	
		\bequ\label{non-diag-G}
			\lv \Cov\lp  h^{N}_{Nx},  h^{N}_{Ny} \rp - f_3\lp x, y \rp \rv < \epsilon.
		\eequ
	\end{itemize}
\end{proposition}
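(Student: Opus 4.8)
The plan is to reduce everything to sharp asymptotics for the discrete \emph{biharmonic} Green's function on $V_N$: since the field has density $\propto\exp\lp-\tfrac12\langle h,\Delta^2 h\rangle\rp$, its covariance $\Cov\lp h_u^{N},h_v^{N}\rp$ equals $G^{V_N}\lp u,v\rp$, the solution of $\Delta^2G^{V_N}\lp\cdot,v\rp=\delta_v$ in $V_N$ that vanishes off $V_N$ (with the range-$2$ boundary data the Hamiltonian enforces). I would split $G^{V_N}$ into a translation-invariant whole-space kernel plus a boundary correction and treat each separately. A structural fact for the whole-space part is the operator identity $\lp\Delta^2\rp^{-1}=\lp\Delta^{-1}\rp^2$, i.e.\ the biharmonic kernel is the self-convolution $g\ast g$ of the simple random walk Green's function $g$ on $\mbZ^4$; this factorization is destroyed by Dirichlet data (the two boundary conditions of a biharmonic problem do not compose), which is precisely why $G^{V_N}$ carries no probabilistic representation and why the GFF toolkit cannot be taken over wholesale.

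First I would pin down the whole-space kernel. Since in $d=4$ the membrane model has no infinite-volume limit, the right object is the subtracted ``biharmonic potential kernel'' $a^{\text{bih}}\lp x\rp:=\lp2\pi\rp^{-4}\int_{\lb-\pi,\pi\rb^4}\frac{1-\cos\lp x\cdot\theta\rp}{\lambda\lp\theta\rp^2}\,d\theta$, where $\lambda\lp\theta\rp=\sum_{j=1}^4\lp2-2\cos\theta_j\rp$ is the symbol of $-\Delta$. Near the origin $\lambda\lp\theta\rp\sim\lv\theta\rv^2$, so in dimension $4$ the integrand behaves like $\lv\theta\rv^{-2}$ and $a^{\text{bih}}$ grows logarithmically; subtracting the continuum fundamental solution of $\Delta^2$ on $\mbR^4$ (the borderline $d=2m$ case of the polyharmonic kernel, $\propto\log\lv x\rv$) and estimating the remainder by stationary phase should give $a^{\text{bih}}\lp x\rp=\tfrac{8}{\pi^2}\log\lv x\rv+c_0+O\lp\lv x\rv^{-\epsilon}\rp$, which fixes $\gamma=8/\pi^2$. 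Combined with polarization, $\mbE\lb\lp h_u^{N}-h_v^{N}\rp^2\rb=\Var h_u^{N}+\Var h_v^{N}-2\Cov\lp h_u^{N},h_v^{N}\rp$, this reduces the first bullet to an $O(1)$ bound on the variance, which comes out of the next step.

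Next I would handle the boundary. Comparing $V_N$ with a large box $V_M$ as $M\to\infty$ and invoking the Gibbs--Markov decomposition gives $G^{V_N}\lp u,v\rp=a^{\text{bih}}\lp u-v\rp+R_v^{N}\lp u\rp+o(1)$, where $R_v^{N}$ is the discrete biharmonic extension to $V_N$ of $-a^{\text{bih}}\lp\cdot-v\rp$ on $\p_2V_N$. The core estimate is a precise expansion of $R_v^{N}$: after rescaling space by $N$ it should converge, up to a multiplicative constant, to the regular part of the \emph{continuum} biharmonic Dirichlet Green's function $\mathcal G$ on $\lb0,1\rb^4$ (smooth off the diagonal, with a $\log$ singularity on it). Combined with Step 1, for $d_N\lp u\rp,d_N\lp v\rp\geq\delta N$ the singular $\log\lv u-v\rv$ pieces cancel and one gets $\Cov=\tfrac{8}{\pi^2}\log\tfrac{N}{1+\lv u-v\rv}+O_\delta(1)$, the second bullet. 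Pushing both points to a common macroscopic point $x$ with bounded lattice offsets yields the third bullet: $\tfrac{8}{\pi^2}\log N$ is the leading term, $f_1$ is the regular part of $\mathcal G$ on the diagonal (continuous in $x$), and $f_2\lp u,v\rp$ collects the lattice-scale contribution, of order $-\log\lp1+\lv u-v\rv\rp$, together with a constant; taking two well-separated macroscopic points $x\neq y$ yields the fourth, with $f_3$ the off-diagonal value of $\mathcal G$, continuous on $D^4$. The quantitative rate in the discrete-to-continuum comparison of biharmonic extensions is exactly what forces the hypothesis $d\lp x\rp\geq\lp\log N\rp^{\theta}/N$, and continuity of $f_1$ and $f_3$ follows from interior elliptic regularity for $\Delta^2$ on the cube.

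The hard part will be everything downstream of the Fourier computation: it must be carried out with no maximum principle and no hitting-time representation, since $\Delta^2$ is fourth order. Controlling the boundary correction $R_v^{N}$ is the delicate step --- showing it is genuinely of size $\log N+O(1)$ rather than larger, extracting its continuum limit, and doing so while tracking the \emph{two} layers of boundary data on $\p_2V_N$; the discrete-to-continuum comparison of biharmonic functions then requires genuine quantitative boundary regularity for the biharmonic equation, in place of the soft coupling arguments that suffice in the GFF setting.
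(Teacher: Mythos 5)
You should first note that the paper contains no proof of this proposition at all: it is imported verbatim from \cite{schweiger2020maximum} (which in turn rests on sharp estimates for the discrete bilaplacian Green's function), so the only meaningful comparison is with that cited work. Measured against it, your outline follows essentially the same route: identify $\Cov(h_u^N,h_v^N)$ with the Dirichlet Green's function of $\Delta^2$, analyse the translation-invariant part through the Fourier representation of a subtracted full-space kernel, and treat the boundary correction by a discrete-to-continuum comparison with the continuum biharmonic Green's function on the cube, necessarily without a maximum principle or random-walk representation. In that sense the strategy is the right one, and your structural remarks (the $g\ast g$ factorization breaking under Dirichlet data, the absence of an infinite-volume limit in $d=4$, interior regularity giving continuity of $f_1,f_3$) are accurate.

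However, as a proof of the four bullets the proposal has a genuine gap: everything you defer to "the hard part" is precisely the content of the cited result. The quantitative expansion of the boundary correction $R_v^N$ (showing it is $\tfrac{8}{\pi^2}\log N+O(1)$ uniformly in the stated regimes, with the two layers of data on $\partial_2 V_N$ tracked), and the discrete-to-continuum comparison of biharmonic functions with an explicit rate — which is what produces the hypothesis $d(x)\geq (\log N)^{\theta}/N$ and the uniformity needed for the third and fourth bullets — are asserted as "should give" rather than established, so none of the four estimates is actually derived. There is also a concrete computational slip in the one step you do carry out: with your normalization $\lambda(\theta)=\sum_{j=1}^4(2-2\cos\theta_j)$, the logarithmic coefficient of the subtracted kernel is $\tfrac{2\pi^2}{(2\pi)^4}=\tfrac{1}{8\pi^2}$, not $\tfrac{8}{\pi^2}$; the constant $\gamma=8/\pi^2$ appearing in the proposition corresponds to the averaged Laplacian $\tfrac{1}{2d}\sum_{u\sim v}(h_u-h_v)$ used in \cite{kurt2009maximum,schweiger2020maximum}, so the normalization must be fixed before the claimed constant comes out of the stationary-phase estimate.
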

Next, we recall a lemma in \cite{kurt2009maximum}.
\begin{proposition}[Lemma 2.11 in \cite{kurt2009maximum}]\label{prop:var-decomp}
	Letting $0 < n < N$, let $A_N \subset  \mbZ^d$ be a box of side-length $N$ $v_B \in \mbZ^d$ and $A_n \subset A_N$ be a concentric box of side-length $n$. Letting $0 < \epsilon < \half$, there exists $c > 0$ such that, for all $v \in A_n$ with $\lv v - v_B \rv \leq \epsilon n$,
	\bequn
		\Var\lp \mbE\lb h_v^{N} \Big| \mcF_{\p_2 A_n} \rb - \mbE\lb h_{v_B}^{N} \Big| \mcF_{\p_2 A_n} \rb \Big| \mcF_{\p_2 A_N} \rp \leq c\epsilon.
	\eequn
\end{proposition}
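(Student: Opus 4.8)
The plan is to peel off the Gaussian conditionings, turn the statement into a pathwise interior regularity estimate for discrete biharmonic functions, and control the ``$L^2$ mass'' of that function via \Cref{prop:4D-cov}. First I would set $\phi_u:=\mbE\lb h_u^N \,\Big|\, \mcF_{\p_2 A_n}\rb$ for $u\in A_n$. Since $\phi_v-\phi_{v_B}$ is Gaussian, the conditional variance in the statement is deterministic and bounded by $\Var(\phi_v-\phi_{v_B})$ (conditioning never increases the variance of a Gaussian; if $h^N$ lives on $A_N$ itself the extra conditioning on $\mcF_{\p_2 A_N}$ is in fact vacuous). So it suffices to prove $\Var(\phi_v-\phi_{v_B})\le c\epsilon$. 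By the Gibbs--Markov property, $h^N|_{A_n}\overset{\text{law}}{=}h^{A_n}+\phi$ with $\phi$ independent of $h^{A_n}$, whence, for every $w\in A_n$,
\[
	\Var(\phi_w-\phi_{v_B})=\mbE\big[(h_w^N-h_{v_B}^N)^2\big]-\mbE\big[(h_w^{A_n}-h_{v_B}^{A_n})^2\big]\ \ge\ 0 .
\]
Moreover, because the precision operator of the membrane model is the discrete bi-Laplacian, the conditional mean is pathwise the discrete biharmonic extension of the boundary data: $\Delta^2\phi_u=0$ for all $u\in A_n$.

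Next I would prove a uniform $O(1)$ bound on the local increments of $\phi$. Fix $R:=n/4$ and $B:=\{w:|w-v_B|\le R\}$. Since $A_n\subset A_N$ are concentric with common centre $v_B$, every $w\in B$ lies at distance $\gtrsim n$ from $\p A_n$ and $\gtrsim N$ from $\p A_N$, so $w$ and $v_B$ are ``deep inside'' both boxes. Applying the first two items of \Cref{prop:4D-cov} to the membrane models on $A_N$ and on $A_n$, \emph{both} second moments on the right-hand side of the displayed identity equal $\tfrac{16}{\pi^2}\log(1+|w-v_B|)$ up to an absolute additive error; the logarithmic terms cancel on subtraction, giving $\sup_{w\in B}\Var(\phi_w-\phi_{v_B})\le C_0$ for an absolute constant $C_0$.

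Finally I would invoke the interior gradient estimate for discrete biharmonic functions: since $\phi-\phi_{v_B}$ is pathwise biharmonic on $A_n$, one has $\big\|\nabla(\phi-\phi_{v_B})\big\|_{L^\infty(B_{R/2})}\le \tfrac{C}{R}\big(\tfrac1{|B|}\sum_{w\in B}(\phi_w-\phi_{v_B})^2\big)^{1/2}$, hence for $v$ with $\ell:=|v-v_B|\le R/4$,
\[
	(\phi_v-\phi_{v_B})^2\ \le\ \frac{C^2\ell^2}{R^2}\cdot\frac1{|B|}\sum_{w\in B}(\phi_w-\phi_{v_B})^2 .
\]
Taking expectations and using the previous paragraph, $\Var(\phi_v-\phi_{v_B})\le \tfrac{C^2\ell^2}{R^2}\,C_0\le 16C^2C_0\,\epsilon^2\le c\epsilon$ whenever $|v-v_B|\le\epsilon n$ and $\epsilon\le\tfrac1{16}$ (so $\ell\le R/4$); in particular $c$ is then absolute, so the bound is genuinely $O(\epsilon)$. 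For the remaining range $\epsilon\in(\tfrac1{16},\tfrac12)$ the point $v$ still sits at distance $\ge(\tfrac12-\epsilon)n$ from $\p A_n$, so the estimate of the previous paragraph applies directly and yields $\Var(\phi_v-\phi_{v_B})\le C_\epsilon\le 16C_\epsilon\,\epsilon$. Taking $c$ to be the larger constant concludes, and then $\Var(\phi_v-\phi_{v_B}\mid\mcF_{\p_2 A_N})\le c\epsilon$ as well.

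\medskip
\noindent\textbf{Main obstacle.}
The only non-bookkeeping ingredient is the discrete biharmonic interior estimate used in the last step --- the counterpart of the classical interior gradient bound for harmonic functions, but for $\Delta^2$, which (unlike the Laplacian) has no random-walk interpretation. It can be obtained by observing that $\Delta\phi$ is discrete harmonic and propagating the classical discrete harmonic gradient estimate through $\Delta\phi=\psi$, or from gradient bounds on the discrete biharmonic Poisson kernel of a box; see e.g.\ \cite{kurt2009maximum, schweiger2020maximum}. The other point that requires care is keeping the geometry under control so that the ``deep inside'' hypotheses of \Cref{prop:4D-cov} genuinely hold --- which is exactly where concentricity of $A_n$ and $A_N$ (and $v_B$ being their centre) is used.
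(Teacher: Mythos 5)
You should note at the outset that the paper itself contains no proof of this proposition: it is quoted as Lemma 2.11 of \cite{kurt2009maximum}, whose original argument runs through explicit smoothness estimates for the discrete biharmonic Dirichlet kernel (writing $\mbE\lb h_v^N\,\big|\,\mcF_{\p_2 A_n}\rb=\sum_{y\in\p_2 A_n}p(v,y)h_y$ and bounding $|p(v,y)-p(v_B,y)|$ using inputs from the numerical-analysis literature on the discrete bilaplacian). Your route is genuinely different and, in its probabilistic part, correct and cleaner: the reduction of the conditional variance to an unconditional one (for jointly Gaussian variables the conditional variance is deterministic and dominated by the unconditional variance), the Gibbs--Markov identity $\Var(\phi_w-\phi_{v_B})=\mbE[(h^N_w-h^N_{v_B})^2]-\mbE[(h^{A_n}_w-h^{A_n}_{v_B})^2]$, the pathwise biharmonicity of the conditional mean, and the $O(1)$ bound on $\Var(\phi_w-\phi_{v_B})$ for $w$ in a ball of radius $n/4$ around the common centre via the cancellation of the logarithms in \Cref{prop:4D-cov} (applied to the field on $A_N$ and to the field on $A_n$, both deep inside) are all sound.

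The genuine gap is that the whole difficulty of the lemma has been pushed into the interior gradient estimate for discrete biharmonic functions with $L^2$ data, which you assert rather than prove. This is exactly the kind of statement that is \emph{not} soft for $\Delta^2$: there is no maximum principle and no random-walk representation, so the classical discrete-harmonic toolkit does not transfer directly. Your one-line derivation sketch (``$\Delta\phi$ is harmonic, propagate the harmonic gradient bound through $\Delta\phi=\psi$'') is incomplete as stated: to run it you first need a Caccioppoli-type bound controlling $\sum_{B_{3R/4}}(\Delta\phi)^2$ by $R^{-4}\sum_{B_R}\phi^2$ for discrete biharmonic $\phi$, and then interior estimates for the inhomogeneous discrete Poisson equation with that right-hand side; both are true but require a careful discrete elliptic-regularity argument or a precise citation (the Green's function/kernel estimates for the discrete bilaplacian underlying \cite{kurt2009maximum, schweiger2020maximum} are the natural source), and neither is stated in the generality you use in those references. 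So the proof is incomplete at precisely the step where the original proof spends its effort. Two smaller points: your final constant is not uniform as $\epsilon\uparrow\tfrac12$, since for such $\epsilon$ the point $v$ may lie only $(\tfrac12-\epsilon)n$ from $\p A_n$ and the ``deep inside'' constants of \Cref{prop:4D-cov} then depend on $\epsilon$ — this is consistent with the literal quantifier order of the statement ($c$ may depend on $\epsilon$), but you should say so explicitly rather than fold it into ``the larger constant''; and when invoking \Cref{prop:4D-cov} for the field on $A_n$ you implicitly use translation covariance of the membrane model on a translated box, which is harmless but worth a sentence.
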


Denote the maximum of the field by $M_N:= \max_{v \in V_N} h_{v}$. 
It was shown in \cite{schweiger2020maximum}that 
the expectation of the maximum of the field is of order 
\bequ
	m_N= \frac{8}{\pi} \log N - \frac{3}{2\pi}\log \log N,
\eequ
while the expectation of maximum of GFF is given by $m_N^{\nabla} = 2\sqrt{\frac{2}{\pi}}\log N - \frac{3}{4}\sqrt{\frac{2}{\pi}}\log \log N$.
In \cite{schweiger2020maximum}, the following theorem is established, which contains all the information we can say about the maximum of 4D membrane models currently.
\begin{Thm}[Convergence in law of the maximum of 4D membrane model\cite{schweiger2020maximum}]\label{thm:convergence-in-law}
The random variable $M_N- m_N$
converges in distribution and the limit law is a randomly shifted Gumbel distribution $\mu_{\infty}$, given by
\bequ
	\mu_{\infty}\lp \lp - \infty , x \rb \rp = \mbE \lb e^{-\gamma^* \mcZ e^{-\pi x}} \rb,
\eequ
where $\gamma^*$ is a constant and $\mcZ$ is a positive variable that is the limit in law of 
\bequ
	\mcZ_N = \sum_{v \in V_N} \frac{8\log N - \pi h_{v}^{N}}{\sqrt{8}} e^{\pi h_{v}^{N} - 8\log N}.
\eequ
\end{Thm}
This theorem is proved in \cite{schweiger2020maximum} by checking a general criterion introduced in \cite{ding2017convergence}.

\section{Comparisons with BRW}\label{sec:mBRW}
The comparison theorems between Gaussian processes are very powerful in studying the extreme statistics of the Gaussian processes. 
In the study of GFF, the comparison between GFF and branching random walk (BRW) is a key tool in the proof of the convergence of the 
extremal process. To establish the convergence of the extremal process of 4D membrane models, we follow the same spirit, and translate the 
bounds on the modified BRW (MBRW, as abbreviated in \Cref{sec:Mainresults}) to the bounds on the 4D membrane model via comparison theorems. 

We start with some discussions on the 4D MBRW.

\subsection{Some basics on 4D membrane models and 4D BRW}
Instead of considering 2D BRW 
which branches into 4 particles at each time in the GFF setting, we consider BRW on 4D branching into 16 particles. This only affects the properties of BRW by a constant. We characterize this in the following proposition. Before that, we also provide the definition of 4D BRW here. See related study for BRW in \cite{zeitouni2016branching}.
\begin{definition}[Definition of 4D BRW \& MBRW\cite{ding2014extreme}]\label{Def-BRW}
	Denote $N = 2^n$. For $k = 0, 1, \cdots, n$, let $\mcB_k$ denote the collection of subsets of $\mbZ^4$ consisting of 4-cubes of side-length $2^k$ with corners in $\mbZ^4$, and let $\mathcal{BD}_k$ denote the collection of subsets of $\mbZ^4$ consisting of 4-cubes of the form $\lp \lb 0, 2^k - 1 \rb \cap \mbZ \rp^4 + \lp i2^k, j2^k, m2^k, n2^k \rp$. Note that $\mathcal{BD}_k$ provide a disjoint partition of $\mbZ^4$. For $v \in V_N$, let $\mcB_k\lp v \rp(\mathcal{BD}_k\lp v \rp)$ denote the elements in $\mathcal{B}_k (\mathcal{BD}_k)$ that contain $v$. It is easy to check that $\forall v \in V_N, \mathcal{BD}_k\lp v \rp$ contains exactly one element.		
 
		Next, consider $\{ a_{k, B} \}_{B \in \mathcal{BD}_k}$ a sequence of i.i.d.\ family of standard Gaussian random variables. The BRW $\lp \mcR_{v}^N \rp_{v \in V_N}$ is defined as
		\bequ
			\theta_{v}^N = \sum_{i = 0}^n \sum_{B \in \mathcal{BD}_k\lp v \rp} a_{k, B}.
		\eequ 
		Meanwhile, we define $R_N = \max_{v \in V_N} \theta_v^N$ and $r_N = \mbE R_N.$ 	
  
		Next, consider $\{ a_{k, B} \}_{B \in \mathcal{B}_k}$ a sequence of i.i.d.\ family of Gaussian random variables with variance $2^{-4k}$. The MBRW $\lp \xi_{v}^N \rp_{v \in V_N}$ is defined as
		\bequ
			\xi_{v}^N = \sum_{k = 0}^n \sum_{B \in \mathcal{B}_k\lp v \rp} a_{k, B}.
		\eequ 
        Note that one of the key differences between BRW and MBRW is that MBRW splits the covariance of each point at level $k$ into $2^{4k}$ parts.
\end{definition}
\begin{proposition}[Comparison of covariance between 4D membrane and 4D MBRW]\label{Cov-comp}
		There exists a constant $C$ s.t.\ $\forall u, v \in V_N$,
		\bequ
			\begin{aligned}
				\lv \Cov\lp h_{u + 2N \times \lp 1, 1, 1, 1 \rp}^{4N}, h_{v + 2N \times \lp 1, 1, 1, 1 \rp}^{4N} \rp - \frac{8\log 2}{\pi^2}\lp n - \log_2 d_{\infty}^N\lp u, v \rp \rp \rv \leq C		,		\\
				\lv \Cov\lp \xi_u^N, \xi_v^N \rp - \lp n - \log_2 d_{\infty}^N\lp u, v \rp \rp \rv \leq C,
			\end{aligned}
		\eequ
		where the distance is given by $d_{\infty}^N\lp u, v \rp = 
		\mathop{\min}_{u \sim_N v}\norml u - v \normr_{\infty}$.
	\end{proposition}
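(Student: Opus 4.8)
The plan is to prove the two covariance estimates separately, since they are structurally independent: the MBRW bound follows from a direct summation of the variance contributions encoded in \Cref{Def-BRW}, while the membrane bound is a restatement of the second item of \Cref{prop:4D-cov} after suitable rescaling and the domain shift by $2N(1,1,1,1)$.

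For the MBRW estimate, I would start directly from the definition $\xi_v^N = \sum_{k=0}^n \sum_{B \in \mathcal{B}_k(v)} a_{k,B}$, where the $a_{k,B}$ are independent with variance $2^{-4k}$. Since at each scale $k$ the only terms shared by $\xi_u^N$ and $\xi_v^N$ come from the boxes $B \in \mathcal{B}_k$ containing both $u$ and $v$, one gets $\Cov(\xi_u^N,\xi_v^N) = \sum_{k=0}^n 2^{-4k}\, \#\{B \in \mathcal{B}_k : u,v \in B\}$. The number of side-length-$2^k$ cubes in $\mbZ^4$ containing two fixed points is $0$ when $2^k < d_\infty(u,v)$ (roughly) and is $\Theta(2^{4k})$ — precisely $(2^k - \ell)^4$ in each coordinate difference $\ell$ — once $2^k \gtrsim d_\infty^N(u,v)$, because of the torus-type identification $\sim_N$. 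Summing the $\Theta(1)$ contributions from scales $k$ with $\log_2 d_\infty^N(u,v) \le k \le n$ yields $n - \log_2 d_\infty^N(u,v)$ up to an additive error that is uniformly bounded: the discrepancy between $(2^k-\ell)^4 2^{-4k}$ and $1$ sums to a convergent geometric-type series, and the boundary scales near $k \approx \log_2 d_\infty^N(u,v)$ contribute only $O(1)$. This gives the second inequality with an absolute constant $C$.

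For the membrane estimate, the cube $V_{4N}$ has side length $4N$, and the shifted point $u + 2N(1,1,1,1)$ lies at distance of order $N$ from $\partial V_{4N}$; hence it is ``deep inside'' in the sense required by \eqref{far-bound} with $\delta$ a fixed constant (e.g.\ $\delta = 1/2$). Applying \eqref{far-bound} to the pair $u + 2N(1,1,1,1)$, $v + 2N(1,1,1,1)$ in $V_{4N}$ gives $\Cov = \log\frac{4N}{1+|u-v|} + O(1)$, uniformly in $u,v$. Writing $4N = 2^{n+2}$ and $\log = \frac{\log 2}{\log_2 e}\cdot$, we have $\log(4N) = (n+2)\log 2$ and $\log(1+|u-v|)$ matches $\log_2 d_\infty^N(u,v)\cdot \log 2$ up to a bounded error coming from the comparison of the Euclidean distance $|u-v|$, the $\ell^\infty$ distance, and the wrapped distance $d_\infty^N$, all of which agree up to bounded multiplicative constants (hence bounded additive error after taking logs), together with the normalization constant $\frac{8}{\pi^2}$ from \Cref{prop:4D-cov}. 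Absorbing all the $O(1)$ terms into a single constant $C$ gives the first inequality.

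The main obstacle I anticipate is bookkeeping the precise meaning of $d_\infty^N$ and checking that replacing $1+|u-v|$ by $d_\infty^N(u,v)$ (and $|u-v|_\infty$, and the minimum over the wrapped representatives $u \sim_N v$) costs only an additive $O(1)$ after taking logarithms — this is where one must be careful near the ``diagonal'' $u = v$ and near the scale $d_\infty^N(u,v) \approx N$, and one should note that the estimate is only claimed up to the additive constant $C$, so these low-scale and near-boundary effects are harmless. A secondary point is verifying that the shift by $2N(1,1,1,1)$ genuinely keeps both points at macroscopic distance from the boundary of $V_{4N}$ uniformly in $u,v \in V_N$, which is immediate since $u,v \in [0,N]^4$ forces $u + 2N(1,1,1,1), v + 2N(1,1,1,1) \in [2N,3N]^4 \subset V_{4N}$ with boundary distance at least $N$.
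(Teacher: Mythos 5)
Your treatment of the MBRW half is essentially the paper's own argument: the paper computes $\mathrm{Cov}(\xi_u^N,\xi_v^N)=\sum_{k\geq \lceil\log_2 d_\infty^N(u,v)\rceil}2^{-4k}\prod_{i=1}^4\bigl(2^k-t_i(u,v)\bigr)$ with $t_i$ the wrapped coordinate differences, gets the upper bound immediately, and gets the lower bound from $\prod_i(1-\epsilon_i)\geq 1-\sum_i\epsilon_i$ plus a geometric series — exactly the box-counting and error-summation mechanism you describe. So that part is fine and matches the paper.

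The membrane half contains a genuine gap. Up to the point $\mathrm{Cov}\bigl(h^{4N}_{u+2N(1,1,1,1)},h^{4N}_{v+2N(1,1,1,1)}\bigr)=\tfrac{8}{\pi^2}\log\tfrac{4N}{1+|u-v|}+O(1)$ your route is reasonable (the paper instead cites bilaplacian Green-function asymptotics from its references rather than \eqref{far-bound}, but the input is the same in spirit, and you correctly reinstate the $8/\pi^2$ normalization that \eqref{far-bound} omits as printed). The problem is the final step, where you assert that the Euclidean distance, the $\ell^\infty$ distance, and the wrapped distance $d_\infty^N(u,v)$ agree up to bounded multiplicative constants, so that swapping $1+|u-v|$ for $d_\infty^N(u,v)$ costs only an additive $O(1)$ after taking logarithms. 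This is false for the wrapped distance: for $u=(0,0,0,0)$ and $v=(N-1,1,0,0)$ (or any pair near opposite faces of $V_N$) one has $|u-v|\asymp N$ but $d_\infty^N(u,v)=O(1)$, so $\tfrac{8}{\pi^2}\log\tfrac{4N}{1+|u-v|}=O(1)$ while $\tfrac{8\log 2}{\pi^2}\bigl(n-\log_2 d_\infty^N(u,v)\bigr)\asymp\log N$; the discrepancy is of order $\log N$, not $O(1)$. What your computation actually yields is only the one-sided bound $\mathrm{Cov}\leq\tfrac{8\log 2}{\pi^2}\bigl(n-\log_2 d_\infty^N(u,v)\bigr)+C$, since $d_\infty^N(u,v)\leq\|u-v\|_\infty$ always (that is the direction fed into the Sudakov--Fernique comparisons later); a genuinely two-sided statement in terms of the torus distance cannot be extracted from \eqref{far-bound} alone, and for antipodal-type pairs it fails as written, so either the unwrapped distance must be used for the membrane field or the claim must be weakened to the one-sided inequality. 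You identified exactly this issue as the "main obstacle" and then declared it harmless; it is not, and it is precisely where the proof breaks. (A minor additional point: $d_\infty^N(u,v)$ can equal $0$ for distinct $u,v$ under the wrapping, so one needs the convention $d_\infty^N\vee 1$ inside the logarithm.)
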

	\begin{proof}
		The proof is by direct computation. For 4D MBRW, we first define the following quantities associated with $u = \lp u_1, u_2, u_3, u_4 \rp, v = \lp v_1, v_2, v_3, v_4 \rp$
		\bequn
			t_i\lp u, v \rp = \min\lbb \lv u_i - v_i \rv, \lv u_i - v_i - N \rv, \lv u_i - v_i + N \rv \rbb.
		\eequn
		Then, one has
		\bequn
			\begin{aligned}
				\Cov\lp \xi_u^N, \xi_v^N \rp = & \ \sum_{k = \lceil \log_2 d_{\infty}^N\lp u, v \rp \rceil}^n 2^{-4k}\prod_{i = 1}^4\lp 2^k - t_i\lp u, v \rp \rp 		\\
				= & \ \sum_{k = \lceil \log_2 d_{\infty}^N\lp u, v \rp \rceil}^n \prod_{i = 1}^4\lp 1 - \frac{t_i\lp u, v \rp}{2^k} \rp 		
				\leq  \ n - \log_2 d_{\infty}^N\lp u, v \rp + 1.
			\end{aligned}
		\eequn
 The lower bound can be obtained via the inequality $\prod_{i = 1}^4\lp 1 - \epsilon_i \rp \geq 1 - \sum_{i = 1}^4 \epsilon_i$ for $\epsilon \geq 0$. One deduces
		\bequn
			\begin{aligned}
				& \ \sum_{k = \lceil \log_2 d_{\infty}^N\lp u, v \rp \rceil}^n \prod_{i = 1}^4\lp 1 - \frac{t_i\lp u, v \rp}{2^k} \rp  
				\geq  \ \sum_{k = \lceil \log_2 d_{\infty}^N\lp u, v \rp \rceil}^n \lp 1 - \sum_{i = 1}^4\frac{t_i\lp u, v \rp}{2^k} \rp			\\
				\geq & \ n - \log_2 d_{\infty}^N\lp u, v \rp - \sum_{k = \lceil \log_2 d_{\infty}^N\lp u, v \rp \rceil}^n \sum_{i = 1}^4\frac{t_i\lp u, v \rp}{2^k}		\\
				\geq  & \ n - \log_2 d_{\infty}^N\lp u, v \rp - 4d_{\infty}^N\lp u, v \rp\sum_{k = \lceil \log_2 d_{\infty}^N\lp u, v \rp \rceil}^n \frac{1}{2^k}		\\
				\geq & \ n - \log_2 d_{\infty}^N\lp u, v \rp - 4\frac{d_{\infty}^N\lp u, v \rp}{2^{\lceil \log_2 d_{\infty}^N\lp u, v \rp \rceil - 1}}		
				\geq  \ n - \log_2 d_{\infty}^N\lp u, v \rp - C.
			\end{aligned}
		\eequn
		The bound for 4D membrane models is obtained by leveraging the Lemma 1.4 of \cite{sun2013uniform} and Corollary 2.9 in \cite{kurt2009maximum}. Note that the factor $\log 2$ appearing 
		in the covariance of 4D membrane models also appears in that of 2D GFF. Since in the lattice the base number is $2$ instead of $e$.
	\end{proof}
 
	\subsection{Comparison of the maximal sum}
	In this subsection, we establish the comparison between 4D membrane models and BRWs.
	We define the following two auxiliary functionals associated with the random field which will help us study the geometry of near maxima.
	\bequn
		\begin{aligned}
			h_{N, r}^{\diamond} := & \ \max\lbb h_{u}^{N} + h_{v}^{N}: u, v \in V_N, r \leq \norml u - v \normr \leq \frac{N}{r} \rbb,		\\
			\wtd h_{N, r}^{\diamond} := & \ \max\lbb h_{u + 2N \times \lp 1, 1, 1, 1 \rp}^{4N} + h_{v + 2N \times \lp 1, 1, 1, 1 \rp}^{4N}: u, v \in V_N, r \leq \norml u - v \normr \leq \frac{N}{r} \rbb,		\\
			\xi_{N, r}^{\diamond} := & \ \max\lbb \xi_{u}^{N} + \xi_{v}^{N}: u, v \in V_N, r \leq \norml u - v \normr \leq \frac{N}{r} \rbb,		\\
		\end{aligned}
	\eequn
	Next, we prove the following proposition.
	\begin{proposition}\label{pair-cmp}
		There exists a constant $3 \leq \kappa \in \mbZ_+$ such that for all $r, n \geq \kappa$ positive integers, with $N = 2^n$, one has
		\bequ
			\sqrt{\frac{8\log 2}{\pi^2}}\mbE\lb \xi_{2^{-\kappa}N, r}^{\diamond} \rb \leq \mbE\lb h_{N, r}^{\diamond} \rb \leq \sqrt{\frac{8\log 2}{\pi^2}}\mbE\lb \xi_{2^{\kappa}N, r}^{\diamond} \rb.
		\eequ
	\end{proposition}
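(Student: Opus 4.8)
The plan is to realize both $h^{\diamond}_{N,r}$ and $\xi^{\diamond}_{N,r}$ as expected maxima of centred Gaussian families indexed by the \emph{same} set of admissible pairs $J_{N,r}:=\lbb (u,v): u,v\in V_N,\ r\leq\norml u-v\normr\leq N/r\rbb$ — namely $\lbb h^{N}_u+h^{N}_v\rbb_{(u,v)\in J_{N,r}}$ and $\lbb \xi^{N}_u+\xi^{N}_v\rbb_{(u,v)\in J_{N,r}}$ — and to run the Sudakov--Fernique Gaussian comparison inequality. This reduces each of the two inequalities to a comparison, for all pairs $(u,v),(u',v')\in J_{N,r}$, of the increments $\mbE\lp\lb (X_u+X_v)-(X_{u'}+X_{v'})\rb^{2}\rp$ of the two pair-sum processes. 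By \Cref{Cov-comp} the increments of the membrane pair-sum process and of $\sqrt{8\log2/\pi^{2}}$ times the MBRW pair-sum process differ only by the additive constant $C$ there: the leading $\log_{2}$-terms coincide once one checks that on the distance range $r\leq\norml u-v\normr\leq N/r$ the periodic distance $d_{\infty}^{N}$ coincides with the genuine $\ell^{\infty}$-distance.

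For the upper bound I would first use the Gibbs--Markov property to pass from $h^{N}$ to the ``deep-inside'' field $\tilde h^{N}_{\cdot}:=h^{4N}_{\cdot+2N(1,1,1,1)}$: writing $h^{4N}_{\cdot+2N(1,1,1,1)}=A+\hat h$ with $\hat h\overset{\text{law}}{=}h^{N}$ and $A\perp\hat h$, each pair-increment of $h^{N}$ is dominated by the corresponding pair-increment of $\tilde h^{N}$, so Sudakov--Fernique gives $\mbE\lb h^{\diamond}_{N,r}\rb\leq\mbE\lb\tilde h^{\diamond}_{N,r}\rb$; the gain is that \Cref{Cov-comp} controls the covariance of $\tilde h^{N}$ (rather than of $h^{N}$) by that of the MBRW. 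Then, viewing $V_{N}$ as a sub-box of $V_{2^{\kappa}N}$ one has $J_{N,r}\subseteq J_{2^{\kappa}N,r}$, hence $\mbE\lb\xi^{\diamond}_{2^{\kappa}N,r}\rb\geq\mbE\lb\max_{(u,v)\in J_{N,r}}(\xi^{2^{\kappa}N}_u+\xi^{2^{\kappa}N}_v)\rb$, and it remains to compare the pair-processes of $\tilde h^{N}$ and of $\sqrt{8\log2/\pi^{2}}\,\xi^{2^{\kappa}N}$ on $J_{N,r}$, using the decomposition $\xi^{2^{\kappa}N}\big|_{V_{N}}=\xi^{N}+Z$ with $Z$ independent of $\xi^{N}$ and carrying the $\kappa$ coarsest scales ($\Var Z\asymp\kappa$, all covariances of $Z$ positive). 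The lower bound is symmetric: restrict $h^{N}$ via Gibbs--Markov to a sub-box $Q$ of side $2^{-\kappa}N$, on which $h^{N}\big|_{Q}=B+\check h$ with $\check h\overset{\text{law}}{=}h^{2^{-\kappa}N}$, $B\perp\check h$, so the pair-increments of $h^{N}\big|_{Q}$ dominate those of a copy of $h^{2^{-\kappa}N}$ and the admissible pairs inside $Q$ form a subset of $J_{N,r}$; then compare with $\sqrt{8\log2/\pi^{2}}\,\xi^{2^{-\kappa}N}$, the $\kappa$ coarsest scales now removed.

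The crux — and the step I expect to be the main obstacle — is that the additive constant $C$ in \Cref{Cov-comp} genuinely cannot be absorbed by a single Sudakov--Fernique comparison: for two admissible pairs differing by one lattice step both pair-increments are themselves $O(1)$, so a bare increment comparison loses an $O(1)$ additive term. This is exactly what the factor $2^{\pm\kappa}$ is for: changing the box size by a factor $2^{\pm\kappa}$ shifts the expected maximal pair-sum of the MBRW by an amount of order $\kappa$ (it adds, resp.\ removes, $\kappa$ coarse scales, each contributing a fixed amount to the effective ``velocity''), and this $\Theta(\kappa)$-shift overrides an $O(1)$ discrepancy once $\kappa$ is fixed large in terms of the absolute constants of \Cref{prop:4D-cov} and \Cref{Cov-comp}. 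To make this rigorous one carries out the comparison scale by scale: peel the $\kappa$ coarsest scales off both fields (Gibbs--Markov for $\tilde h^{N}$, the exact hierarchical decomposition for the MBRW), so that the scale-sensitive comparison is done on a coarse field supported on only $O(2^{4\kappa})$ — that is, a bounded number of — effective sites, where Sudakov--Fernique loses only $O(\sqrt{\kappa})$, while the remaining fine fields are conditionally independent copies of the field on a box of side $2^{n-\kappa}$ that match (or are dominated, via one more Gibbs--Markov step) across the two models. I expect the delicate part to be precisely this bookkeeping — verifying that the coarse-scale loss is $O(\sqrt\kappa)$ while the gain from the peeled scales is of order $\kappa$, and treating separately the admissible pairs whose two endpoints lie in the same coarse cell versus in different coarse cells.
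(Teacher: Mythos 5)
You have the right engine (Sudakov--Fernique plus \Cref{Cov-comp}, after the Gibbs--Markov step replacing $h^N$ by $\wtd h:=h^{4N}_{\cdot+2N(1,1,1,1)}$), and you correctly diagnose the central obstacle: a bare increment comparison matches the two pair-sum processes only up to an additive $O(1)$. But the repair you propose is not the mechanism that makes the statement true, and as sketched it does not close. Since Sudakov--Fernique sees only increments, the slack of order $\kappa$ must be produced \emph{inside every quadruple increment}, and the way to do this is the one the paper uses: a \emph{dilation} embedding $\Psi_N(u)=2^{\kappa-3}N(1,1,1,1)+2^{\kappa-2}u$ of $V_N$ into $V_{2^{\kappa}N}$. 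Under this map every pairwise distance is multiplied by $2^{\kappa-2}$, so each covariance of $\xi^{2^{\kappa}N}$ at image points equals $(n+\kappa)-\lp \log_2 d(u,v)+\kappa\rp+O(1)=n-\log_2 d(u,v)+O(1)$, i.e.\ it is essentially unchanged, while each variance gains $\kappa$; consequently every pair-sum increment of $\sqrt{8\log 2/\pi^2}\,\xi^{2^{\kappa}N}\circ\Psi_N$ exceeds the corresponding increment of $\wtd h$ by roughly $\tfrac{32\log 2}{\pi^2}\kappa-O(1)$, uniformly in the quadruple, and one single Sudakov--Fernique application absorbs the constant of \Cref{Cov-comp} once $\kappa$ is fixed large; the dilated pairs remain admissible for $\xi^{\diamond}_{2^{\kappa}N,r}$, giving the upper bound. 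Your identity embedding $J_{N,r}\subseteq J_{2^{\kappa}N,r}$ produces no such slack: any two points of $V_N$ share essentially all MBRW boxes of side larger than $N$, so the $\kappa$ extra coarse scales inflate variances and off-diagonal covariances alike and cancel out of every increment --- which is exactly why you are driven to the peeling scheme. That scheme, as described, does not supply the missing quantity either: after peeling, the fine-field comparison is again an increment comparison in which the level shift ($n$ versus $n-\kappa$) cancels, and the asserted ``gain of order $\kappa$ from the peeled scales versus an $O(\sqrt{\kappa})$ coarse loss'' is precisely what needs proof; Sudakov--Fernique is blind to a common upward shift of variances accompanied by the same shift of covariances, so the fact that $\mbE\xi^{\diamond}_{2^{\kappa}N,r}-\mbE\xi^{\diamond}_{N,r}\asymp\kappa$ cannot be invoked additively against an $O(1)$ increment discrepancy.

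Your lower bound has a second, independent gap. Restricting $h^N$ to a centered sub-box $Q$ of side $2^{-\kappa}N$ by Gibbs--Markov is fine, but it reduces the task to comparing $\sqrt{8\log 2/\pi^2}\,\xi^{2^{-\kappa}N}$ with the zero-boundary membrane field $h^{2^{-\kappa}N}$ \emph{on the same box}. Near the boundary of that box the membrane field has $O(1)$ variance, so for two admissible pairs lying near the boundary and far from each other the membrane increment is $O(1)$ while the MBRW increment is of order $\log_2$ of their separation; the required domination of increments genuinely fails there, and neither \Cref{Cov-comp} nor \Cref{prop:4D-cov} gives two-sided control of $h^{2^{-\kappa}N}$ near its own boundary. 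Moreover, even in the bulk this same-size comparison has no $\kappa$ slack left to absorb the $O(1)$. The paper avoids both problems at once by again dilating: it maps $V_{2^{-\kappa}N}$ into the bulk of the $4N$-box via $u\mapsto 2^{\kappa}u+2N(1,1,1,1)$ and compares the MBRW on the small box with $\wtd h$ at these dilated interior points, where the covariance is controlled and the dilation now makes the membrane increments dominate by $\Theta(\kappa)$.
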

	\begin{proof}
	The proof is based on the Sudakov-Fernique comparison theorem for Gaussian processes which is guaranteed by a careful analysis of covariance.		\\
		
	We first recall two basic relations between $h_{N, r}^{\diamond} $ and $\wtd h_{N, r}^{\diamond}$, i.e.,
	\bequ\label{inequ1}
		\begin{aligned}
			\mbE\lb h_{N, r}^{\diamond} \rb & \ \leq \mbE\lb \wtd h_{N, r}^{\diamond} \rb,		\\
			\mbP\lp \max_{v \in V_N} h_v^{N} \geq \lambda \rp & \ \leq 2\mbP\lp \max_{v \in V_N} h_{v + 2N \times \lp 1, 1, 1, 1 \rp}^{N} \geq \lambda \rp
		\end{aligned}
	\eequ
	These are proved in \cite{ding2014extreme} by the Gibbs-Markov property for the GFF, which translates easily to the case of 4D membrane models. Hence the proof is omitted.
	
	Now, we turn back to the desired conclusion. We first prove the upper bound, which is guaranteed by 
	$\mbE\lb \wtd h_{N, r}^{\diamond} \rb \leq \sqrt{\frac{8\log 2}{\pi^2}}\mbE\lb \xi_{2^{\kappa}N, r}^{\diamond} \rb$. Since the first field is defined on the 4-cube with side length $4N$ while the second one 
	has side length $2^{\kappa}N$. To leverage the Sudakov-Fernique comparison theorem, we have to specify a pairing between these two fields, 
	which is chosen as $\Psi_N\lp u \rp = 2^{\kappa - 3}N \times \lp 1, 1, 1, 1 \rp + 2^{\kappa - 2}u, u \in V_N$. Note that for this map, 
	we view the first field as a field on $V_N$ instead of $V_{4N}$. This is an injective map from $V_N$ to $V_{2^{\kappa}N}$ and thus 
	induces a pairing between these two fields. Then by Gaussian comparison, it suffices to derive the following bound
	\bequ\label{equ:cond1}
		\begin{aligned}
			& \ \mbE\lb \lp h_{u + 2N \times \lp 1, 1, 1, 1 \rp}^{4N} + h_{v + 2N \times \lp 1, 1, 1, 1 \rp}^{4N} - h_{u' + 2N \times \lp 1, 1, 1, 1 \rp}^{4N} - h_{v' + 2N \times \lp 1, 1, 1, 1 \rp}^{4N}\rp^2 \rb 		\\
			\leq & \  \frac{8\log 2}{\pi^2}\mbE\lb \lp  \xi_{u}^{2^{\kappa}N} + \xi_{v}^{2^{\kappa}N} - \xi_{u'}^{2^{\kappa}N} - \xi_{v'}^{2^{\kappa}N} \rp^2 \rb, \qquad \forall u, v, u', v'.
		\end{aligned}
	\eequ
        We can expand the LHS and RHS as
		\bequn
			\begin{aligned}
				LHS = & \  \frac{8\log 2}{\pi^2} \lb 4n \rpt + \left. 2\lp n - c\lp u, v \rp \rp + 2\lp n - c\lp u', v' \rp \rp - 2\lp n - c\lp u, v' \rp \rp \rb  \nonumber \\
				& \Phantom{\frac{8\log 2}{\pi^2} \lb 4n \rpt} - \left. 2\lp n - c\lp u', v \rp \rp - 2\lp n - c\lp u, u' \rp \rp - 2\lp n - c\lp v', v \rp \rp	\rb + O\lp 1 \rp \nonumber	\\
				= & \  \frac{16\log 2}{\pi^2}\lb  c\lp u, v' \rp + c\lp u', v \rp + c\lp u, u' \rp + c\lp v', v \rp - c\lp u, v \rp - c\lp u', v' \rp \rb + O\lp 1 \rp, \\ \nonumber	\end{aligned}
                \eequn
                and
                \bequn
                \begin{aligned}
				RHS = & \  \frac{8\log 2}{\pi^2} \lb 4\lp n + \kappa \rp \rpt + \left. 2\lp n + \kappa - c\lp u, v \rp - \kappa \rp + 2\lp n + \kappa - c\lp u', v' \rp - \kappa \rp  \rb  \nonumber \\
				& \Phantom{\frac{8\log 2}{\pi^2} \lb 4\lp n + \kappa \rp \rpt} \left. - 2\lp n + \kappa - c\lp u, v' \rp - \kappa \rp - 2\lp n + \kappa - c\lp u', v \rp - \kappa \rp \rpt \nonumber	\\
				& \Phantom{\frac{8\log 2}{\pi^2} \lb 4\lp n + \kappa \rp \rpt} \left. - 2\lp n + \kappa - c\lp u, u' \rp - \kappa \rp - 2\lp n + \kappa - c\lp v', v \rp - \kappa \rp	\rb + O\lp 1 \rp \nonumber	\\
				= & \  \frac{16\log 2}{\pi^2}\lb  c\lp u, v' \rp + c\lp u', v \rp + c\lp u, u' \rp + c\lp v', v \rp - c\lp u, v \rp - c\lp u', v' \rp + 2\kappa \rb + O\lp 1 \rp, \\ \nonumber
			\end{aligned}
		\eequn
		where $c\lp u, v \rp = \log_2 d_{\infty}^N\lp u, v \rp$ in above derivation. Notice the $O\lp 1 \rp$ term appearing above only 
		depends on the constant $C$ in proposition \Cref{Cov-comp} rather than $N$. Consequently, we can choose $\kappa$ large enough so that the equation \eqref{equ:cond1} holds. Comparing the field 
		$$\lbb h_{u + 2N \times \lp 1, 1, 1, 1 \rp}^{4N} + h_{v + 2N \times \lp 1, 1, 1, 1 \rp}^{4N}: u, v \in V_N, r \leq \norml u - v \normr \leq \frac{N}{r} \rbb$$
        with 
		$$\Big\{ \xi_{\Psi_N\lp u \rp}^{2^{\kappa}N} + \xi_{\Psi_N\lp v \rp}^{2^{\kappa}N}: u, v \in V_N,  r \leq \norml u - v \normr \leq \frac{N}{r} \Big\}$$ which is a subset of $$\lbb \xi_{u}^{2^{\kappa}N} + \xi_{v}^{2^{\kappa}N}: u, v \in V_{2^{\kappa}N}, r \leq \norml u - v \normr \leq \frac{2^{\kappa}N}{r} \rbb$$ 
		where we use $r \leq \norml u - v \normr \leq \frac{N}{r} \Longrightarrow r \leq \norml \Psi_N\lp u \rp - \Psi_N\lp v \rp \normr \leq \frac{2^{\kappa}N}{r}$, we obtain the right part of conclusion, i.e., 
		$\mbE\lb h_{N, r}^{\diamond} \rb \leq \sqrt{\frac{8\log 2}{\pi^2}}\mbE\lb \xi_{2^{\kappa}N, r}^{\diamond} \rb$.
		For the lower bound, the method is identical. One only has to modify the pairing between two fields. Namely, in this case, the range of 4D MBRW is $V_{2^{-\kappa}N}$ is smaller than the range for 4D 
		membrane models and one has to specify a map from $V_{2^{-\kappa}N}$ to $V_{N}$, which is now chosen to be $\Psi'_N\lp u \rp = 2^{\kappa}u, u \in V_N$ and it is easy to verify that the covariance 
		comparison holds between fields $$\Big\{ \xi_{u}^{2^{-\kappa}N} + \xi_{v}^{2^{-\kappa}N}: u, v \in V_{2^{-\kappa}N}, r \leq \norml u - v \normr \leq \frac{2^{-\kappa}N}{r} \Big\},$$ 
		$$\lbb h_{\Psi'_N\lp u \rp + 2N \times \lp 1, 1, 1, 1 \rp}^{4N} + h_{\Psi'_N\lp v \rp + 2N \times \lp 1, 1, 1, 1 \rp}^{4N}: u, v \in V_{2^{-\kappa}N}, r \leq \norml u - v \normr \leq \frac{2^{-\kappa}N}{r} \rbb$$ $$\subset \lbb h_{u + 2N \times \lp 1, 1, 1, 1 \rp}^{4N} + h_{v + 2N \times \lp 1, 1, 1, 1 \rp}^{4N}: u, v \in V_N, r \leq \norml u - v \normr \leq \frac{N}{r} \rbb.$$ 
		And the left part of conclusion is established, i.e., $\sqrt{\frac{8\log 2}{\pi^2}}\mbE\lb \xi_{2^{-\kappa}N, r}^{\diamond} \rb \leq \mbE\lb h_{N, r}^{\diamond} \rb$.
	\end{proof}
	The above proposition is one of the most important ingredients to derive the geometry of near maxima of a 4D membrane model. It bridges the gap between these models and 4D MBRW and transfers the information of 4D MBRW models to 4D membrane. The next subsection is devoted to some conclusions on the two models.
	
	We further compare the sums of more particles between 4D membranes and BRW. Firstly, we define the following quantities
	\bequ\label{def:several-maxima}
			\begin{aligned}
				S_{\ell, N} &:= \max_{v_1, v_2, \cdots, v_{\ell} \in V_N}\sum_{i = 1}^{\ell} h_{v_i}^{N}, 		\\
				R_{\ell, N} &:= \frac{2\sqrt{2\log 2}}{\pi}\max_{v_1, v_2, \cdots, v_{\ell} \in V_N}\sum_{i = 1}^{\ell} \theta_{v_i}^N, 		\\
				U_{\ell, N} &:= \arg\max_{v_1, v_2, \cdots, v_{\ell} \in V_N}\sum_{i = 1}^{\ell} h_{v_i}^{N}.
			\end{aligned}
	\eequ
	We prove the following proposition, which generalized the upper bound in a previous result to multiple summations.
	\begin{proposition}
		There exists an absolute constant $\kappa \in \mbN$ s.t. $\mbE S_{\ell, N} \leq \mbE R_{\ell, N2^{\kappa}}.$  
	\end{proposition}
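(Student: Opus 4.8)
The plan is to run a Sudakov--Fernique comparison in the same spirit as \Cref{pair-cmp}, but now between the $\ell$-tuple membrane process and the $\ell$-tuple \emph{ordinary} BRW $\theta$ rather than the MBRW $\xi$, and only in one direction. Consider the centered Gaussian process $X_{\vec v}=\sum_{i=1}^{\ell}h^{4N}_{v_i+2N(1,1,1,1)}$ indexed by $\vec v=(v_1,\dots,v_\ell)\in (V_N)^\ell$, and, for the same type of injection $\Psi_N:V_N\hookrightarrow V_{2^\kappa N}$ used in \Cref{pair-cmp} (dilation by $2^{\kappa-2}$ composed with a recentering, so that the image avoids torus wrap-around), the process $Y_{\vec v}=\frac{2\sqrt{2\log 2}}{\pi}\sum_{i=1}^{\ell}\theta^{2^\kappa N}_{\Psi_N(v_i)}$. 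Since $\{\theta^{2^\kappa N}_{\Psi_N(v_i)}\}$ is a subfamily of $\{\theta^{2^\kappa N}_v\}_{v\in V_{2^\kappa N}}$ one has $\mbE\max_{\vec v}Y_{\vec v}\le\mbE R_{\ell,2^\kappa N}$, and conditioning $h^{4N}$ on the field outside the embedded copy of $V_N$ and applying Jensen (the Gibbs--Markov argument behind the first line of \eqref{inequ1}) gives $\mbE S_{\ell,N}\le\mbE\max_{\vec v}X_{\vec v}$. So everything reduces to checking the Sudakov--Fernique hypothesis $\mbE[(X_{\vec v}-X_{\vec w})^2]\le\mbE[(Y_{\vec v}-Y_{\vec w})^2]$ for all $\vec v,\vec w\in (V_N)^\ell$.

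The inputs for that check are already on the table. By \Cref{prop:4D-cov} one has, uniformly, $\mbE[(h^{4N}_{a+2N(1,1,1,1)}-h^{4N}_{b+2N(1,1,1,1)})^2]\le\frac{16}{\pi^2}\log(1+|a-b|)+O(1)$, while for the BRW $\mbE[(\theta^{M}_{a}-\theta^{M}_{b})^2]=2k_M(a,b)$, where $k_M(a,b)$ is the smallest scale at which $a,b$ sit in a common dyadic cube of $V_M$; a common scale-$k$ cube forces $|a-b|_\infty<2^k$, whence $k_M(a,b)\ge\log_2|a-b|_\infty$, and the dilation built into $\Psi_N$ upgrades this to $k_{2^\kappa N}(\Psi_N(u),\Psi_N(v))\ge(\kappa-2)+\log_2|u-v|_\infty$. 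Thus the diagonal ($i=j$) contributions to $\mbE[(Y_{\vec v}-Y_{\vec w})^2]$ beat the corresponding ones of $\mbE[(X_{\vec v}-X_{\vec w})^2]$ by at least $\frac{16}{\pi^2}(\kappa-2)\log 2-O(1)$ each, which is positive once $\kappa$ is a large enough absolute constant. The cleanest way to close the gap, and the one I would actually use, is to observe that, absent a distinctness constraint on the $v_i$, the maxima factorize: $S_{\ell,N}=\ell\max_{v\in V_N}h^N_v$ and $R_{\ell,N}=\frac{2\sqrt{2\log 2}}{\pi}\ell\max_{v\in V_N}\theta^N_v$, so the claim collapses to the single-point case $\ell=1$, where only the diagonal estimate is needed and the same $\kappa$ serves every $\ell$ (consistently with the normalization $\sqrt{8\log 2}/\pi=2\sqrt{2\log 2}/\pi$).

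The subtlety I expect to require the most care is the contrast with \Cref{pair-cmp}. There the MBRW is used precisely because its covariance matches the membrane's up to $O(1)$ \emph{two-sidedly} (\Cref{Cov-comp}), so in the $L^2$-expansion of $\mbE[(X_{\vec v}-X_{\vec w})^2]$ all leading $n$-terms cancel, the indefinite-sign cross terms $\mbE[(h_{v_i}-h_{w_i})(h_{v_j}-h_{w_j})]$, $i\neq j$, are reproduced faithfully, and the $\Theta(\kappa)$ surplus from the extra scales of $V_{2^\kappa N}$ (carried by the variance terms) swamps the $O(\ell^2)$ error from the constants $C$ in \Cref{Cov-comp} and $\alpha_0$ in \Cref{prop:4D-cov}. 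The ordinary BRW dominates the membrane covariance only \emph{one-sidedly} --- two adjacent points straddling a high-level dyadic interface have anomalously small BRW covariance, hence anomalously large BRW increment variance --- which is harmless for the one-sided bound we want but blocks any term-by-term control of the cross terms. Hence the factorization above is not just a convenience; it is what lets the ordinary (rather than modified) BRW suffice. If the $\ell$-tuple version with distinct $v_i$ were genuinely needed, one would instead have to run the MBRW-style cancellation argument and only at the last step pass from the MBRW bound to the BRW bound.
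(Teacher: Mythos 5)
There is a genuine gap, and it sits exactly at the step you call ``the cleanest way to close the gap.'' Your argument only goes through because you read the definition of $S_{\ell,N}$ as a maximum over arbitrary (possibly repeated) $\ell$-tuples, so that $S_{\ell,N}=\ell\max_{v\in V_N}h^N_v$ and the statement collapses to $\ell=1$. But that reading cannot be the intended one: under it $\mbE S_{\ell,N}=\ell\,\mbE M_N=\ell m_N+O(\ell)$, which contradicts \Cref{cor:several-maxima-exp} ($\mbE S_{\ell,N}\le \ell(m_N-c\log\ell)$) for large $\ell$, and $U_{\ell,N}$ is used as a genuine set of $\ell$ sites in the proof of \Cref{lem:tight2}. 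The quantity $S_{\ell,N}$ is the sum of the $\ell$ largest (distinct) values of the field, as in \cite{ding2014extreme}; the $\ell$-dependence is the whole content of the proposition. Once distinctness is enforced, your own discussion shows the tuple-indexed Sudakov--Fernique route fails on the cross terms, and the fallback you sketch (run the MBRW-style cancellation of \Cref{pair-cmp} on $\ell$-tuples and pass to the BRW at the end) does not produce an \emph{absolute} $\kappa$: the $O(1)$-per-pair errors from \Cref{Cov-comp} enter through $O(\ell^2)$ cross terms, while the slack coming from the extra $\kappa$ dyadic scales is only of order $\ell\kappa$, so $\kappa$ would have to grow with $\ell$.

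The paper's proof avoids increment bookkeeping altogether, and this is the missing idea. It equalizes \emph{variances} by adding a single common Gaussian with site-dependent coefficients, $h^{4N}_{v+2N(1,1,1,1)}+a_vX$, on the membrane side, and independent noise $\kappa X_v$ to the BRW (so that $\mbE\wtd R_{\ell,N}\le\mbE R_{\ell,N2^{\kappa}}$ by the summation representation), and then invokes the variant of Slepian's inequality for maxima of sums over $\ell$-point subsets (Lemma 2.7 in \cite{ding2014extreme}). That lemma needs only equal variances together with the one-sided covariance domination
\begin{equation*}
\frac{8\log 2}{\pi^{2}}\,\mbE\bigl[\theta^N_u\theta^N_v\bigr]\;\le\;\mbE\bigl[h^{4N}_{u+2N(1,1,1,1)}h^{4N}_{v+2N(1,1,1,1)}\bigr]+a_ua_v ,
\end{equation*}
which holds for an absolute $\kappa$ because the BRW covariance lies below the membrane covariance up to $O(1)$ and the term $a_ua_v\gtrsim\kappa^2$ absorbs that constant. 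The one-sidedness that blocks your term-by-term control of cross terms is precisely what this comparison tolerates; without this (or an equivalent) tool, your proposal does not prove the stated proposition.
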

	\begin{proof}	
		Consider $\wtd \theta_{v}^N = \theta_{v}^N + \kappa X_v$ where $X_v$'s are i.i.d.\ Gaussian r.v.\ and the corresponding quantity is denoted 
		by $\wtd R_{\ell, N}$. By the summation representation of BRW, it is clear to conclude $\mbE\wtd R_{\ell, N} \leq \mbE R_{\ell, N2^{\kappa}}$. 
		Now, let $X$ be another independent standard Gaussian r.v. and choose a sequence ${a_v, v \in V_N}$ such that
		\bequn
			\Var\lp h_{v + 2N \times \lp 1, 1, 1, 1 \rp}^{4N} + a_v X \rp = \frac{8\log 2}{\pi^2}\Var \lp \wtd \theta_{v}^N \rp, \quad v \in V_N.
		\eequn
		By \Cref{Cov-comp}, i.e., the variance between $h_{v + 2N \times \lp 1, 1, 1, 1 \rp}^{4N}$ and $\theta_{v}^N$ is bounded, we can 
		choose $\kappa$ large enough to guarantee the existence of $a_v$. Denote by $\wtd S_{\ell, N}$ the corresponding maximum sum of the field $h_{v}^{N} + a_v X$, 
		then via a similar argument of \eqref{inequ1}, we conclude that $\mbE S_{\ell, N} \leq \mbE \wtd S_{\ell, N}$. Lastly, it remains to prove 
		$\mbE \wtd S_{\ell, N} \leq \mbE\wtd R_{\ell, N}$. This is proved by leveraging a variant of Slepian's inequality, i.e., Lemma 2.7 in 	\cite{ding2014extreme} and combining previous equality on variances of two Gaussian processes and the following inequality on the covariance 
		structure
		\bequn
			\begin{aligned}
				\frac{8\log 2}{\pi^2} \mbE \lb \wtd \theta_{u}^N \wtd \theta_{v}^N \rb = \frac{8\log 2}{\pi^2} \mbE \lb \theta_{u}^N \theta_{v}^N \rb \leq & \ \mbE \lb \lp h_{u + 2N \times \lp 1, 1, 1, 1 \rp}^{4N} + a_u X \rp\lp h_{v + 2N \times \lp 1, 1, 1, 1 \rp}^{4N} + a_v X \rp\rb \\
				= & \  \mbE \lb h_{u + 2N \times \lp 1, 1, 1, 1 \rp}^{4N} h_{v + 2N \times \lp 1, 1, 1, 1 \rp}^{4N} \rb + a_ua_v.
			\end{aligned}
		\eequn
		This inequality is satisfied if we choose a sufficiently large $\kappa$.
	\end{proof}
	An upper bound on the sum of several maxima of the 4D membrane model is a direct corollary of the above comparison between 4D membrane models and MBRW
	and the corresponding bound on MBRW.
	\begin{corollary}\label{cor:several-maxima-exp}
		For some constant $c$, $\mbE S_{\ell, N} \leq \ell(m_N - c\log \ell)$.
	\end{corollary}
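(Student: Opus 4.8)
The plan is to deduce the corollary from the preceding proposition $\mbE S_{\ell, N} \leq \mbE R_{\ell, N2^{\kappa}}$ together with a matching upper bound on the BRW quantity $\mbE R_{\ell, M}$ for a 4D BRW on a box of side length $M$. Since $R_{\ell, M}$ is (up to the fixed factor $\frac{2\sqrt{2\log 2}}{\pi}$) the maximum over $\ell$-tuples of the sum of $\ell$ BRW values, and $M = N2^\kappa$ only changes $\log M$ from $\log N$ by an additive constant absorbable into $c$, it suffices to prove $\mbE R_{\ell, M} \leq \ell\bigl(m_M - c'\log\ell\bigr)$ for the BRW itself, and then relabel.

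First I would exploit the tree structure of the 4D BRW. A 16-ary BRW on $V_M$ with $M = 2^n$ is literally a branching random walk on the regular tree with $16$ children per node and $n$ generations: $\theta_v^M = \sum_{k=0}^n a_{k,B_k(v)}$ where $B_k(v)$ is the unique level-$k$ dyadic block containing $v$. The key combinatorial input is that any $\ell$-tuple of leaves $v_1,\dots,v_\ell$ determines, for each level $k$, a partition of the tuple according to which level-$k$ block each $v_i$ lies in; the number of distinct blocks used at level $k$ is at most $\min(\ell, 16^k)$ (and also at most $16$ times the number used at level $k-1$). The plan is to bound $\mbE R_{\ell,M}$ by a union-bound / first-moment argument over the combinatorial types of such $\ell$-tuples: for a fixed ``branching profile'' the sum $\sum_i \theta_{v_i}^M$ is Gaussian with a variance controlled by how much the paths share, so a Gaussian tail estimate plus a count of the number of profiles and of tuples realizing each profile yields the bound. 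Alternatively — and this is cleaner — I would reduce to the single-max estimate: for $\ell$ leaves one can first reveal the increments down to the last common-ancestor structure and argue that the sum of the top $\ell$ values of an $\ell$-ary-branching subtree, started from the split point, contributes at most $\ell \cdot (\text{increment mean}) - c\log\ell$ per generation past the split, because forcing $\ell$ branches all to stay high simultaneously costs an entropy factor $\log\ell$ — precisely the statement that $\mbE\max_{|T|=\ell}\sum_{i} (\text{BRW leaf } i) \leq \ell m - c\log\ell$ in a BRW with $m = \mbE(\max)$.

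Concretely I would proceed as follows. (i) Recall from the excerpt (and from the standard BRW literature, e.g.\ \cite{zeitouni2016branching}) that $r_M := \mbE R_M \sim 2\sqrt{2\log 2}\,n/\pi \cdot(\text{const})$, matched to $m_M$ up to $O(\log\log M)$; more importantly, I would cite or re-derive the right-tail bound $\mbP(R_M \geq r_M + x) \leq C e^{-cx}$ and its refinements. (ii) For the $\ell$-tuple sum, split the contribution by generations: the first $\log_{16}\ell$ generations are ``before saturation'' — here all $\ell$ particles may sit in distinct blocks and each of the $\leq\ell$ block-variables is counted with multiplicity equal to the number of the $v_i$ below it, so this part contributes at most (a constant)$\cdot\ell\log_2\ell$ to the total variance and, after a union bound over the at-most-$\binom{M^4}{\ell}\leq M^{4\ell}$ choices of tuple, at most $\ell\cdot O(\log M) - c\log\ell$ after optimizing the exponent; the remaining generations behave like $\ell$ nearly-independent copies of a BRW on a box of side $\approx M/\ell^{1/4}$, contributing at most $\ell\, m_{M} + O(\ell)$ but with the crucial $-c\log\ell$ coming from requiring all $\ell$ of them to be simultaneously near-maximal. (iii) Assemble: $\mbE R_{\ell,M} \leq \ell m_M - c\log\ell + O(\ell)$; since the $O(\ell)$ is linear it is harmless relative to $\ell(m_M - c\log\ell)$ once $c$ is decreased, and translating back $M = N2^\kappa$ costs only $\ell\cdot O(\kappa)$, again absorbed. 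This gives $\mbE S_{\ell,N}\leq \mbE R_{\ell,N2^\kappa}\leq \ell(m_N - c\log\ell)$.

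The main obstacle I expect is making the ``$-c\log\ell$'' gain rigorous and uniform in $\ell$ (in particular for $\ell$ growing with $N$): one must show that forcing $\ell$ disjoint branches of the BRW to all reach height $\approx m$ simultaneously really costs a multiplicative $e^{-c\ell\log\ell}$-type probability, i.e.\ that the per-particle maximum drops by $\Omega(\log\ell)$, rather than merely inheriting the trivial bound $\mbE S_{\ell,N}\leq \ell \mbE M_N \leq \ell m_N + O(\ell)$. The clean way to get the logarithmic gain is a first-moment bound on the number of $\ell$-tuples of leaves all exceeding level $m_N - c\log\ell$: by the ``number of high points'' estimates for log-correlated fields (the count of $\lambda$-high points is $\approx M^{d(1-\lambda^2)}$, here $d$-analogue), the expected number of such tuples is subpolynomial once $c$ is large, forcing the maximum sum down. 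I would therefore either invoke such a high-points / truncated-first-moment estimate for the 4D BRW directly, or do the union bound in step (ii) carefully enough that the entropy term $\ell\log(M^4)$ is beaten by the Gaussian cost of $\ell$ simultaneous deviations, yielding the stated $c\log\ell$ after optimization. Everything else is routine Gaussian computation and bookkeeping of the additive $O(\kappa)$, $O(\ell)$, and $O(\log\log M)$ errors.
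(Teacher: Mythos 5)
Your overall architecture coincides with the paper's: the corollary is obtained by feeding the comparison $\mbE S_{\ell,N}\le \mbE R_{\ell,N2^{\kappa}}$ into an upper bound of the form $\mbE R_{\ell,M}\le \ell\lp m_M-c\log\ell\rp$ for the 4D BRW, and absorbing the shift $m_{N2^{\kappa}}-m_N=O(\kappa)$. The difference is that the paper does not prove the BRW input at all --- it is one of the ``slightly modified classical results'' quoted from Section 3 of \cite{ding2014extreme} --- whereas you set out to re-derive it. Your fallback derivation (control the $k$-th order statistic via a first-moment count of points above $m_M-\lambda$, which is of order $e^{c\lambda}$ and hence small relative to $k$ once $\lambda\lesssim c^{-1}\log k$, integrate against the right tail of the maximum, and sum over $k\le\ell$) is essentially the standard argument behind the cited estimate, so in that version your proposal is correct and more self-contained than the paper's citation.

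Two caveats. First, your primary route in step (ii) --- a union bound over the $\le M^{4\ell}$ tuples with a per-tuple Gaussian tail --- fails as literally stated: for a clustered tuple (all $v_i$ in a sub-box of side $\rho\ll M$) the variance of $\sum_i\theta_{v_i}^M$ is of order $\ell^2\log(M/\rho)$, so the Gaussian cost of reaching height $\ell m_M$ is only polynomially small in $M$ and cannot beat an entropy factor $M^{4\ell}$; the bound only closes after organizing tuples by their branching profile (your ``key combinatorial input''), trading entropy against variance scale by scale, and this is exactly the part you do not carry out. Second, even granted good tuple counts, ``no $\ell$-tuple lies entirely above $m_M-c\log\ell$'' does not by itself bound $\mbE R_{\ell,M}$, since a large sum can come from a few unusually high points compensating lower ones; one genuinely needs the order-statistics decomposition $\mbE\sum_{k\le\ell}\eta_{(k)}\le\sum_{k\le\ell}\lp m_M-c\log k\rp+O(\ell)$ together with the right-tail estimate for the maximum, i.e.\ your fallback, not the tuple-count shortcut. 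Finally, absorbing the additive $O(\kappa)+O(\ell)$ errors into $-c\ell\log\ell$ only works for $\ell$ beyond a fixed constant (already at $\ell=1$ the statement needs an implicit $O(1)$ slack); that imprecision is inherited from the corollary as stated and is not specific to your argument.
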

 
	\subsection{Extreme statistics of MBRW}
	
	In this section, we recall several results on extreme statistics of MBRW which will be useful when we derive the corresponding results in 4D membrane models using comparisons of Gaussian processes. 
	Most existing works of BRW are based on the setting that the expectation of the number of branching each time is $2$, c.f. 
	\cite{mckean1975application, bramson1978maximal, lalley1987conditional, arguin2011genealogy, arguin2013extremal}. In the study of GFF, BRW as 
	a 2D field is defined to compare with GFF, which branches into $4$ new particles each time. In our case, since we have to compare with 
	the 4D membrane model, 4D BRW and MBRW have to be defined which branch into $16$ particles at each time as in the \Cref{Def-BRW} before. 
	Similarly, we have to slightly modify classical results to fit this setting. The proof of these results can be found in section 3 of 
	\cite{ding2014extreme}.
	\begin{proposition}[Lemma 3.2 in \cite{ding2014extreme}]
		Recall that $\lbb \theta_v^N \rbb_{v \in V_N}$ is 4D BRW. Then, one has
		\bequ
			\mbE\lb \max_{v \in V_N}\theta_v^N \rb = r_N = \wtd m_N + O\lp 1 \rp,
		\eequ
		where $\wtd m_N = 2\sqrt{\frac{2}{\log 2}} \log N - \frac{3}{8}\sqrt{\frac{2}{\log 2}}\log \log N$. Further, there exist constants $c, C > 0$, so that, for $y \in \lb 0, \sqrt{n} \rb,$
		\bequn
			ce^{-2\sqrt{2\log 2}y} \leq \mbP\lp \max_{v \in V_N}\theta_v^N \geq \wtd m_N + y \rp \leq C\lp 1 + y \rp e^{-2\sqrt{2\log 2}y}.
		\eequn
	\end{proposition}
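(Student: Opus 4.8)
\emph{Proof plan.} The plan is to recognize $\lbb\theta_v^N\rbb_{v\in V_N}$ as a \emph{plain} branching random walk (BRW) — not a modified one — and to invoke the standard first/second-order asymptotics of the BRW maximum. By \Cref{Def-BRW}, $\lbb\theta_v^N\rbb$ is a BRW on a perfect $16$-ary tree of depth $n$: the leaves are the $16^n=N^4$ points of $V_N$, the level-$k$ ancestor of a leaf $v$ is the unique cube $B_k\lp v\rp\in\mathcal{BD}_k$ containing it, and $\theta_v^N=\sum_{k=0}^n a_{k,B_k\lp v\rp}$ is the sum of i.i.d.\ $\mathcal N\lp0,1\rp$ increments along the root-to-leaf path. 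Thus $\Var\lp\theta_v^N\rp=n+1$ and $\Cov\lp\theta_u^N,\theta_v^N\rp=n-k\lp u,v\rp+1$, where $k\lp u,v\rp$ is the level of the most recent common ancestor. The speed of this BRW is $v_\star=\sqrt{2\log 16}=2\sqrt{2\log 2}$, and since $n=\log_2 N$ one computes $v_\star n=2\sqrt{2/\log 2}\,\log N$ and $\tfrac{3}{2v_\star}\log n=\tfrac38\sqrt{2/\log 2}\,\log\log N+O\lp1\rp$; hence $\wtd m_N=v_\star n-\tfrac{3}{2v_\star}\log n+O\lp1\rp$ is exactly the Bramson centering of this walk, and the proposition is precisely the classical statement for a Gaussian BRW.

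For the upper bounds I would start from the first-moment (union) bound $\mbP\lp\max_v\theta_v^N\ge\wtd m_N+y\rp\le 16^n\,\mbP\lp\mathcal N\lp0,n+1\rp\ge\wtd m_N+y\rp$, which is of order $n\,e^{-2\sqrt{2\log 2}\,y}$ — too large by a factor $n$ — and then recover the sharp bound by the standard ballot/barrier refinement: restrict the count to leaves whose ancestral trajectory $\lp\theta^N_{B_k\lp v\rp}\rp_k$ stays below the affine barrier joining $0$ at the root to $\wtd m_N+y$ at the leaf. A ballot estimate for a Gaussian bridge of length $n$ contributes a factor of order $\lp1+y\rp/n$, and multiplying by $16^n$ gives $\mbP\lp\max_v\theta_v^N\ge\wtd m_N+y\rp\le C\lp1+y\rp e^{-2\sqrt{2\log 2}\,y}$ for $y\in\lb0,\sqrt n\rb$. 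Integrating this, and handling $y>\sqrt n$ by a crude bound, yields $r_N=\mbE\lb\max_v\theta_v^N\rb\le\wtd m_N+O\lp1\rp$. For the lower bounds I would use the truncated second-moment method: for $y\in\lb0,\sqrt n\rb$, let $Z$ count the leaves $v$ with $\theta_v^N\in\lb\wtd m_N+y,\wtd m_N+y+1\rb$ whose ancestral path respects the (slightly indented) barrier; the ballot estimate gives $\mbE Z$ of order $\lp1+y\rp e^{-2\sqrt{2\log 2}\,y}$, and evaluating $\mbE Z^2$ by splitting the pair sum according to the separation level $k\lp u,v\rp$ and using the hierarchical covariance structure gives $\mbE Z^2\le C\,\mbE Z$ uniformly in $n$, whence by Paley--Zygmund $\mbP\lp\max_v\theta_v^N\ge\wtd m_N+y\rp\ge c\,e^{-2\sqrt{2\log 2}\,y}$. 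Taking $y=0$ gives $\mbP\lp\max_v\theta_v^N\ge\wtd m_N\rp\ge c$, which together with the standard tightness of $\max_v\theta_v^N-\wtd m_N$ gives $r_N\ge\wtd m_N-O\lp1\rp$, completing $r_N=\wtd m_N+O\lp1\rp$.

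All of the above is exactly Section~3 of \cite{ding2014extreme}, carried out there for the $4$-ary ($2$D) BRW; the only change is that the branching number is $16$ rather than $4$, which replaces $\sqrt{2\log 4}$ by $\sqrt{2\log 16}=2\sqrt{2\log 2}$ throughout while leaving every step intact, so in practice I would simply cite it. The one genuinely delicate point, and the only real obstacle, is not the linear term $v_\star n$ (immediate from the first and second moments) but the $-\tfrac{3}{2v_\star}\log n$ correction and the $\lp1+y\rp$ prefactor: both rest on sharp ballot estimates for a Gaussian random walk conditioned to stay below a line, uniformly over the relevant ranges of $n$ and $y$, which is where Bramson's \cite{bramson1978maximal} technology (or its later streamlinings) is essential.
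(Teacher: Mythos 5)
Your proposal matches what the paper does: the paper gives no proof of this proposition, simply pointing to Section 3 of \cite{ding2014extreme} with the remark that the 4D setting only replaces 4-ary by 16-ary branching, which is exactly your reduction, and your constant bookkeeping (speed $2\sqrt{2\log 2}$, centering $2\sqrt{2/\log 2}\log N-\tfrac38\sqrt{2/\log 2}\log\log N$) is correct. The first-moment/ballot upper bound and truncated second-moment lower bound you sketch are precisely the content of the cited argument, so the proposal is correct and takes essentially the same route.
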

	The next result is a crucial bound on the quantity $\xi_{N, r}^{\diamond}$.
	\begin{proposition}\label{prop:bound-pair-MBRW}
		There exist constants $c_1, c_2 > 0$ such that 
		\bequ
			2m_N - c_2 \log \log r \leq \mbE\xi_{N, r}^{\diamond} \leq 2m_N - c_1 \log \log r,
		\eequ
	\end{proposition}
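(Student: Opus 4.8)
The plan is to classify the competing pairs $(u,v)$ by their \emph{branching scale} $j:=\lceil\log_2 d_\infty^N(u,v)\rceil$; the constraint $r\le\lv u-v\rv\le N/r$ confines $j$ to the window $[\ell,n-\ell]$ up to an additive $O(1)$, where $\ell:=\lceil\log_2 r\rceil$. (Here and below $\wtd m_N=2\sqrt{2/\log 2}\log N-\tfrac38\sqrt{2/\log 2}\log\log N$ is the normalization of the MBRW maximum; we will actually establish $2\wtd m_N-c_2\log\log r\le\mbE\xi^\diamond_{N,r}\le 2\wtd m_N-c_1\log\log r$, which is the assertion in the normalization of \Cref{Def-BRW}.) For a fixed scale $j$ I would decompose the MBRW into coarse and fine parts $\xi_v^N=\xi_v^{N,\ge j}+\xi_v^{N,<j}$, with $\xi_v^{N,<j}=\sum_{k<j}\sum_{B\in\mcB_k(v)}a_{k,B}$. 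Since no box of side $<2^j$ can contain two points at distance $\ge 2^j$, for a pair at branching scale $j$ the fine parts $\xi_u^{N,<j}$ and $\xi_v^{N,<j}$ are \emph{independent}, and, up to a boundary correction handled as in \Cref{Cov-comp}, each is a copy of an MBRW on a box of side $2^j$. The coarse parts satisfy $\xi_u^{N,\ge j}+\xi_v^{N,\ge j}=2\sum_{k\ge j}\sum_{B\ni u,v}a_{k,B}$ plus contributions from boxes containing exactly one of $u,v$, the latter of total variance $O(d_\infty^N(u,v)\,2^{-j})=O(1)$, while the symmetric summand is, viewed as a function of the scale-$j$ cell containing the pair, twice a branching field of depth $n-j$ with covariance $\asymp n-\log_2(\text{distance})$. (Equivalently, one may first Sudakov--Fernique/Slepian-compare $\xi^\diamond_{N,r}$ with a pair maximum of the ordinary BRW $\theta$ at a comparable scale, exactly as in the proof of \Cref{pair-cmp}, and then exploit the genuine tree structure of $\theta$ along the dyadic partitions $\mathcal{BD}_k$.)

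Granting this decomposition, the typical value of $\max\{\xi_u^N+\xi_v^N:\lceil\log_2 d_\infty^N(u,v)\rceil=j\}$ is $2\wtd m_{2^{n-j}}+2\wtd m_{2^j}+O(1)$: the doubled coarse field contributes $2\wtd m_{2^{n-j}}$ and the two independent fine maxima contribute $\wtd m_{2^j}$ each. Using $\wtd m_{2^m}=2m\sqrt{2\log 2}-\tfrac38\sqrt{2/\log 2}\log m+O(1)$, the deficit relative to $2\wtd m_N$ is
\bequn
	2\wtd m_{2^{n-j}}+2\wtd m_{2^j}-2\wtd m_N=-\tfrac34\sqrt{2/\log 2}\,\log\tfrac{j(n-j)}{n}+O(1),
\eequn
which over $j\in[\ell,n-\ell]$ is least negative at the endpoints $j=\ell,\,n-\ell$, where it equals $-\tfrac34\sqrt{2/\log 2}\log\ell+O(1)=-\Theta(\log\log r)$; this is the scale that will govern both bounds.

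For the lower bound I would fix the single scale $j=\ell$. By the lower tail and concentration of the depth-$(n-\ell)$ branching-field maximum (the MBRW analogue of \cite[Lemma~3.2]{ding2014extreme}), with probability bounded away from $0$ some scale-$(\ell+1)$ cell $P$ carries coarse value $\ge\wtd m_{2^{n-\ell}}-O(1)$; conditionally on the coarse field the fine parts are fresh and i.i.d.\ over the $16$ children of $P$, so by their concentration, still with probability bounded away from $0$, \emph{every} child of $P$ carries a fine maximum $\ge\wtd m_{2^\ell}-O(1)$. Choosing $u,v$ in two non-adjacent children of $P$ realizing these maxima yields $\xi_u^N+\xi_v^N\ge 2\wtd m_{2^{n-\ell}}+2\wtd m_{2^\ell}-O(1)$ on an event of positive probability; since $\xi^\diamond_{N,r}$ is a maximum of Gaussians it concentrates about its mean, so $\mbE\xi^\diamond_{N,r}\ge 2\wtd m_{2^{n-\ell}}+2\wtd m_{2^\ell}-O(1)\ge 2\wtd m_N-c_2\log\log r$.

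For the upper bound I would run a first-moment argument over the $O(n)$ admissible scales. Combining the right tail of the coarse pair maximum — which has rate $\sqrt{2\log 2}$, half the rate of a single depth-$(n-j)$ branching-field maximum because the coarse field enters doubled — with the steeper tails of the two independent fine maxima gives, for each $j$ and $s\ge 0$,
\bequn
	\mbP\bigl(\exists\,u,v:\ \lceil\log_2 d_\infty^N(u,v)\rceil=j,\ \xi_u^N+\xi_v^N\ge 2\wtd m_{2^{n-j}}+2\wtd m_{2^j}+s\bigr)\le C(1+s)^{3}e^{-\sqrt{2\log 2}\,s}.
\eequn
Summing over $j\in[\ell,n-\ell]$ and rewriting the threshold in terms of its optimal value $2\wtd m_N-\tfrac34\sqrt{2/\log 2}\log\ell$, the $j$-th summand picks up the weight $\bigl(\tfrac{\ell(n-\ell)}{j(n-j)}\bigr)^{3/2}$, and the total stays bounded (uniformly in $s$, polynomially in $n$) precisely because the exponent $3/2=\sqrt{2\log 2}\cdot\tfrac34\sqrt{2/\log 2}$ strictly exceeds $1$; hence $\mbP\bigl(\xi^\diamond_{N,r}\ge 2\wtd m_N-\tfrac34\sqrt{2/\log 2}\log\ell+s\bigr)\le Ce^{-\sqrt{2\log 2}\,s}$ and $\mbE\xi^\diamond_{N,r}\le 2\wtd m_N-c_1\log\log r$ after slightly shrinking $c_1$ and adjusting for small $r$. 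The hard part will be the first clause of this step: the modified-BRW decomposition is not an exact tree split, so one must control the asymmetric coarse corrections uniformly over the $\asymp 2^{4n}$ pairs \emph{without} passing to their supremum (which would cost a spurious $\sqrt n$), and this is where either the delicate bookkeeping of \cite{ding2014extreme} or the reduction to the dyadic BRW via \Cref{pair-cmp}-type comparisons is essential; a secondary subtlety is that pinning the correction at $\log\log r$ rather than something larger hinges on exactly the tail-rate-versus-deficit-rate inequality $\sqrt{2\log 2}\cdot\tfrac34\sqrt{2/\log 2}>1$ above, which must also be reconciled with the constant delivered by the lower-bound construction so that both $c_1,c_2$ come out positive.
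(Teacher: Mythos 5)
Your multiscale plan (classify pairs by branching scale $j$, typical value $2\wtd m_{2^{n-j}}+2\wtd m_{2^j}+O(1)$, optimize over $j\in[\ell,n-\ell]$ to get the $\log\log r$ deficit at $j\approx\ell$, first moment for the upper bound, an explicit construction for the lower bound) is the standard route and its arithmetic is consistent; it is also a different presentation from the paper, which handles the upper bound by a Sudakov--Fernique comparison with a modified BRW $R_v^N=(1-\epsilon_N)S_v^N-G_v$ and imports the pair-max estimates for the BRW, and handles the lower bound by quoting the positive-probability lemma ($\mbP(\wtd\xi^\diamond_{N,r}\geq 2\wtd m_N-C_1\log\log r)\geq C_2$) from \cite{ding2014extreme}. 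However, your argument has a genuine gap at the last step of the lower bound. Your construction only yields $\mbP\lp\xi^\diamond_{N,r}\geq 2\wtd m_N-C\log\log r\rp\geq c$ for some $c>0$, and you pass to the expectation by asserting that ``$\xi^\diamond_{N,r}$ is a maximum of Gaussians so it concentrates about its mean.'' The available concentration (Borell--TIS) is at scale $\sqrt{\Var(\xi_u^N+\xi_v^N)}\asymp\sqrt n$, which is enormously larger than $\log\log r$ (take $r$ fixed and $N\to\infty$); from a constant-probability event it only gives $\mbE\xi^\diamond_{N,r}\geq 2\wtd m_N-C\log\log r-O(\sqrt n)$, which is vacuous. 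Tightness of $\xi^\diamond_{N,r}$ at scale $\log\log r$ is essentially equivalent to what is being proved, so it cannot be invoked. To close this you need either a boosting/sprinkling argument giving a fast-decaying left tail (many nearly independent sub-boxes, as in \Cref{lem:sprinkling} and \Cref{cor:left-tail}, or Proposition~3.5 of \cite{ding2014extreme}), or an expectation-level version of your construction: take the argmax cell of the coarse field and the argmaxes of the fine fields in two designated children, and use the independence of fine and coarse parts (and the $O(1)$-variance, mean-zero asymmetric corrections) so that all error terms have zero conditional mean, yielding $\mbE\xi^\diamond_{N,r}\geq 2\mbE[\max\text{coarse}]+2\mbE[\max\text{fine}]-O(1)$ directly.

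A secondary issue sits in the upper bound. The per-scale tail estimate $\mbP(\exists\,u,v\text{ at scale }j:\ \xi_u^N+\xi_v^N\geq 2\wtd m_{2^{n-j}}+2\wtd m_{2^j}+s)\leq C(1+s)^3e^{-\sqrt{2\log 2}\,s}$ with $C$ uniform in $j$ and $n$ is exactly the crux (a naive union bound produces ballot-type polynomial-in-$(n-j)$ prefactors), and you assert it rather than prove it — which you acknowledge. But note that your tolerance ``bounded \ldots polynomially in $n$'' is not affordable: a $\mathrm{poly}(n)$ prefactor costs an additive $O(\log n)=O(\log\log N)$ in the expectation, which can dominate $\log\log r$; only prefactors polynomial in $\ell\asymp\log r$ (such as the $O(\ell)$ you actually get from summing $(\ell(n-\ell)/j(n-j))^{3/2}$ over $j$) are harmless, since they merely shrink $c_1$. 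So the bookkeeping must be done at the level of barrier/truncated first-moment estimates (or by reducing to the exact dyadic BRW via a \Cref{pair-cmp}-type comparison, which is what the paper does), not just the scale-by-scale union bound as written.
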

	\begin{proof}
		The proof of the upper bound is similar to that in the comparison theorem of Gaussian processes. Let $S_v^N$ be a BRW of depth $N$ and set $R_v^N = (1-\epsilon_N)S_v^N - G_v$ where $G_V$ is a collection of 
		i.i.d. Gaussian r.v. with mean $0$ and variance $\sigma^2$ to be specified later and $\epsilon_N = O(1/n)$. We choose $\sigma, \epsilon_N$
		such that $\mbE\lp R_v^N\rp^2 = \mbE\lp \xi_v^N\rp^2$ and $\mbE\lp R_v^N - R_u^N \rp^2 = \mbE\lp \xi_v^N - \xi_u^N \rp^2$. 
		With the comparison theorem, we conclude the upper bound in \Cref{prop:bound-pair-MBRW}.

		To prove the lower bound, we will use a lemma stated below.
	\end{proof}
	\begin{lemma}
		There exist constants $C_1, C_2 > 0$ such that for all $N$ large and all $r$,
		\bequ
			\mbP\lp \tilde{m}_N - C_1 \log \log r \leq \wtd \xi_{N, r}^{\diamond} \rp \geq C_2.
		\eequ
	\end{lemma}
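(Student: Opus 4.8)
\emph{Proposed approach.} The plan is to deduce the lemma from the formally stronger two-point statement: with probability bounded below, uniformly in $N$ and in large $r$, the $16$-ary MBRW has two sites $u,v\in V_N$ with $r\le\norml u-v\normr_{\infty}\le N/r$ and $\xi_u^N,\xi_v^N\ge\wtd m_N-\tfrac{C_1}{2}\log\log r$; this forces $\wtd\xi_{N,r}^{\diamond}\ge\xi_u^N+\xi_v^N\ge 2\wtd m_N-C_1\log\log r$, hence a fortiori the claimed bound. I would prove the two-point statement by a truncated second moment (Paley--Zygmund) argument. One may assume $r$ large and $r\le\sqrt N$: if $r>\sqrt N$ there is no admissible pair, so the statement is read as concerning $N$ large relative to $r$, and the regime of bounded $r$ (where $\log\log r=O(1)$) is disposed of separately, see the end.

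Fix a separation scale $\rho$ with $r\le\rho\le N/r$, concretely $\rho\asymp\sqrt N$, put $\mu:=\wtd m_N-\tfrac{C_1}{2}\log\log r$, and set
\[
\mathcal N:=\#\Bigl\{(u,v)\in V_N^2:\ \tfrac14\rho\le\norml u-v\normr_{\infty}\le\rho,\ \xi_u^N\ge\mu,\ \xi_v^N\ge\mu\Bigr\}.
\]
By \Cref{Cov-comp} one has $\Var\lp\xi_v^N\rp=n+O(1)$ and $\Cov\lp\xi_u^N,\xi_v^N\rp=n-\log_2\norml u-v\normr_{\infty}+O(1)$, so for admissible $u,v$ with $\norml u-v\normr_{\infty}\asymp\rho$ a Laplace estimate for the bivariate orthant probability --- decompose $\xi_u^N=W+X_u$, $\xi_v^N=W+X_v$ with $W$ the common (shared-box) part, $\Var W=n-\log_2\rho+O(1)$, independent of the i.i.d.\ pair $X_u,X_v$ with $\Var=\log_2\rho+O(1)$, then optimize over the level of $W$ --- gives $\mbP\lp\xi_u^N\ge\mu,\ \xi_v^N\ge\mu\rp=\exp\lp-\mu^2/(2n-\log_2\rho)+O(\log n)\rp$. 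Since the shell contains $\asymp N^4\rho^4$ ordered pairs and $\wtd m_N=2\sqrt{2\log2}\,n+O(\log n)$, with $\rho\asymp\sqrt N$ this yields
\[
\log\mbE\mathcal N=(4n+4\log_2\rho)\log 2-\frac{\mu^2}{2n-\log_2\rho}+O(\log n)\ge c\,n+O(\log n)\longrightarrow\infty .
\]

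The heart of the proof, and the main obstacle, is $\mbE\mathcal N^2\le C(\mbE\mathcal N)^2$. This is the standard but delicate log-correlated second-moment computation: the $2^k$-box decoupling of the MBRW --- conditioning on the coarse levels $k\ge\log_2\rho$ makes the fine fields of sites at mutual distance $\gtrsim\rho$ conditionally independent, and iterating this scale by scale down to $1$ --- organizes the four-point probability, the ``generic'' (well-spread) quadruples giving the main term $\asymp(\mbE\mathcal N)^2$. The danger is the near-diagonal quadruples, in which two of the four sites sit at some distance $d\ll\rho$: their $\xi$-values then share $n-\log_2 d+O(1)$ units of variance, inflating the four-point probability, and the sum over $d=1,\dots,\rho$ of the $\asymp N^4 d^4\rho^4$ such terms must be shown to be $O\lp(\mbE\mathcal N)^2\rp$. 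This is exactly where the slack $\tfrac{C_1}{2}\log\log r$ in $\mu$ is spent: just as in the geometry of near maxima (\Cref{thm:4Dm-geo}), this near-diagonal sum diverges --- logarithmically, reflecting the clustering of near maxima --- when $\mu$ is pushed up to $\wtd m_N$, but lowering the level by a sufficiently large multiple of $\log\log r$ inserts a damping factor of order $(\log r)^{-cC_1}$ at the critical scales and forces convergence once $C_1$ is taken large; the precise divergent pattern and the admissible value of $C_1$ fall out of this computation. Carrying it out --- following the second-moment schemes of \cite{ding2014extreme,biskup2016extreme}, now for the $16$-ary $4$D MBRW --- is where the real work lies.

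Granting $\mbE\mathcal N\to\infty$ and $\mbE\mathcal N^2\le C(\mbE\mathcal N)^2$, Paley--Zygmund gives $\mbP(\mathcal N\ge1)\ge(\mbE\mathcal N)^2/\mbE\mathcal N^2\ge 1/C=:C_2$, uniformly in $N$ and in large $r$; on $\{\mathcal N\ge1\}$ there is an admissible pair with $\xi_u^N+\xi_v^N\ge2\mu\ge\wtd m_N-C_1\log\log r$, which is the lemma in this regime. For bounded $r$ one argues directly: a near-maximizer $u$ of $\xi^N$ --- available since $\mbP(\max_v\xi_v^N\ge\wtd m_N-O(1))\ge c$, the MBRW analogue of the BRW bound recalled above --- together with the neighbour $v=u+\lp\lceil r\rceil,0,0,0\rp$ differs from $u$ only in the $O(\log r)=O(1)$ lowest levels of the field, so $\lv\xi_u^N-\xi_v^N\rv=O(1)$ with positive conditional probability, and since $\log\log r=O(1)$ here the whole statement is absorbed into the constants. (Alternatively the lemma can be obtained by a hierarchical recursion directly on the MBRW tree, splitting it at scale $r$ and demanding that two well-separated scale-$r$ subtrees each carry a near-maximizer of their fine field; the accounting of the $\log\log$ corrections is essentially the same.)
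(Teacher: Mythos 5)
The paper itself does not prove this lemma --- it imports it from Section~3 of \cite{ding2014extreme} (the MBRW analogue of their Proposition~3.5) --- so the real question is whether your argument would stand on its own, and it would not. The decisive step, $\mbE\mathcal N^2\le C(\mbE\mathcal N)^2$, is not only left unproven but is false for the untruncated count you define, and the failure is not of the logarithmic type you propose to absorb into $C_1$. Take the degenerate quadruples with $u'=u$ and $v,v'$ both at distance $\asymp\rho=\sqrt N$ from $u$: there are $\asymp N^4\rho^8=2^{8n}$ of them, and for three points pairwise at distance $\asymp\rho$ the joint exceedance probability at level $\mu\approx 2\sqrt{2\log 2}\,n$ is $\exp\lp-\tfrac{3\mu^2}{4n}+o(n)\rp=\exp\lp-6n\log 2+o(n)\rp$, giving a contribution $\exp\lp 2n\log2+o(n)\rp$ to $\mbE\mathcal N^2$, while your own first-moment computation gives $(\mbE\mathcal N)^2=\exp\lp\tfrac43 n\log 2+o(n)\rp$. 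The ratio diverges exponentially in $n=\log_2 N$, and a slack of order $\log\log r\le\log\log N$ in $\mu$ changes both sides only by $\mathrm{poly}(\log N)$ factors, so no choice of $C_1$ repairs this. Plain Paley--Zygmund cannot work here; one needs the modified (barrier-truncated) second moment in which the whole multiscale trajectory of each high point is constrained, which is precisely the technical content of \cite{ding2014extreme} that your sketch defers.

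There is also a structural miscalibration that no refinement of the second moment at separation $\rho\asymp\sqrt N$ can cure. Your intermediate claim asserts two points at distance $\asymp\sqrt N$ both above $\wtd m_N-\tfrac{C_1}{2}\log\log r$ with probability bounded below \emph{uniformly in $r$}; but for $r$ fixed (or growing slowly), that threshold is only $O(\log\log r)=O(1)$ below the maximum, whereas the clustering of near-maxima (\Cref{thm:4Dm-geo} and its MBRW analogue, i.e.\ the upper-bound half of \Cref{prop:bound-pair-MBRW}) forces any pair at separation lying in $[\rho, N/\rho]$ with $\rho\asymp N^{1/2}$ to sit at least $c\log\log N$ below $\wtd m_N$, with probability tending to one. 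So the event you aim to produce has probability tending to zero in exactly the regime where the lemma must hold; the divergence of $\mbE\mathcal N$ is driven by rare configurations and is compatible with this. The correct construction (and the one behind the cited proof) takes the pair at separation comparable to $r$, inside a single near-maximal cluster: the common coarse trajectory above scale $r$ is that of a single near-maximizer, and the two independent fine fields of depth $\log_2 r$ below the branching scale each reach near the top of their subtrees, the $\Theta(\log\log r)$ loss coming from the entropic-repulsion corrections at that scale --- an argument run with barrier truncation, not a plain pair count at macroscopic separation. Your closing parenthetical (splitting the tree at scale $r$) is in fact the right idea, but it is exactly the part you did not carry out.
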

	Using \Cref{prop:bound-pair-MBRW} and previous comparison result \Cref{pair-cmp} between 4D membrane and 4D MBRW, one deduces
	\begin{corollary}\label{cor:bound-pair-4Dm}
		There exist constants $c_1, c_2, C > 0$ such that 
		\bequ
			2m_N- c_2 \log \log r - C \leq \mbE h_{N, r}^{\diamond} \leq 2m_N- c_1 \log \log r + C.
		\eequ
	\end{corollary}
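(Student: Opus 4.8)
The plan is to chain the two preceding results and then do a short normalization bookkeeping; there is essentially no analytic content here beyond what is already contained in \Cref{pair-cmp} and \Cref{prop:bound-pair-MBRW}. Let $\kappa$ be as in \Cref{pair-cmp}, and work throughout in the regime relevant to \Cref{thm:4Dm-geo}, i.e.\ $r$ fixed and $N = 2^n \to \infty$, so that all the maxima below are over non-empty sets and $N$ is as large as needed.

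For the upper bound, the right half of \Cref{pair-cmp} gives $\mbE h_{N,r}^\diamond \le \sqrt{8\log 2/\pi^2}\,\mbE \xi_{2^\kappa N, r}^\diamond$, and the upper bound of \Cref{prop:bound-pair-MBRW} applied with $N$ replaced by $2^\kappa N$ bounds the right-hand side by $\sqrt{8\log 2/\pi^2}\bigl(2\tilde m_{2^\kappa N} - c_1 \log\log r\bigr)$. For the lower bound, the left half of \Cref{pair-cmp} gives $\mbE h_{N,r}^\diamond \ge \sqrt{8\log 2/\pi^2}\,\mbE \xi_{2^{-\kappa}N, r}^\diamond$ --- legitimate because $2^{-\kappa}N = 2^{n-\kappa}$ is again a power of $2$ tending to infinity --- and the lower bound of \Cref{prop:bound-pair-MBRW} with $N$ replaced by $2^{-\kappa}N$ bounds this below by $\sqrt{8\log 2/\pi^2}\bigl(2\tilde m_{2^{-\kappa}N} - c_2 \log\log r\bigr)$.

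It remains to rewrite these MBRW bounds in terms of the membrane centering $m_N$. The key point is the identity $\sqrt{8\log 2/\pi^2}\,\tilde m_M = m_M$ for every $M$: the leading terms agree since $\sqrt{8\log 2/\pi^2}\cdot 2\sqrt{2/\log 2} = 8/\pi$, and the $\log\log$ terms agree since $\sqrt{8\log 2/\pi^2}\cdot \tfrac38\sqrt{2/\log 2} = \tfrac{3}{2\pi}$. Since $\kappa$ is a fixed absolute constant, $m_{2^{\pm\kappa}N} = m_N + O(1)$ with the $O(1)$ depending only on $\kappa$, while the term $\log\log r$ does not involve $N$ (it is merely rescaled by the constant $\sqrt{8\log 2/\pi^2}$, which is absorbed upon renaming $c_1, c_2$). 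Collecting all the $O(1)$ contributions --- in particular the constant $C$ of \Cref{Cov-comp}, which is already built into $\kappa$ --- into a single constant and relabelling the multiplicative constants yields
\[
	2m_N - c_2 \log\log r - C \;\le\; \mbE h_{N,r}^\diamond \;\le\; 2m_N - c_1 \log\log r + C,
\]
which is the claim. The only step requiring any care --- and hence the nearest thing here to an obstacle --- is to keep the normalization factor $\sqrt{8\log 2/\pi^2}$, which arises because \Cref{pair-cmp} compares with the \emph{unnormalized} MBRW, consistently attached to the MBRW statistics and to combine it with the centering identity above, rather than silently switching between the two conventions.
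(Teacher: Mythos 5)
Your proposal is correct and follows exactly the route the paper intends: \Cref{cor:bound-pair-4Dm} is obtained by chaining \Cref{pair-cmp} with \Cref{prop:bound-pair-MBRW} (applied at $2^{\pm\kappa}N$) and absorbing the resulting $O(1)$ shifts, which is all the paper itself does. Your explicit check of the normalization identity $\sqrt{8\log 2/\pi^2}\,\tilde m_M = m_M$ is a welcome clarification, since the statement of \Cref{prop:bound-pair-MBRW} should really be read with the MBRW centering $\tilde m_N$ for the constants to come out as claimed.
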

	This will be the main inequality to derive the geometry of near maxima of 4D membrane models in the next section, i.e. \Cref{thm:4Dm-geo}.

	Finally, we state without proof the following estimate on the tail bound of the sum of several maxima of 4D membrane models which will be an important
	ingredients in the proof of the tightness results.

 	Define 
	\begin{equation}\label{def:geo-set}
		\Xi_{N, r} = \{(u, v)\in V_N\times V_N: r\leq \lv u - v \rv \leq N/r \}.
	\end{equation}
	\begin{corollary}\label{cor:several-maxima-tail}
		We have
		\begin{equation}
			\mcP(\exists A \subset \Xi_{N, r}: \lv A \rv > \log r, (u, v)\in A \Rightarrow h_u + h_v > 2(m_N - \lambda\log\log r)) > 1 - Ce^{-e^{c\lambda\log\log r}}
		\end{equation}
		for some $C,c > 0$, all $N \geq 1$ and all $r,\lambda \geq C$.
	\end{corollary}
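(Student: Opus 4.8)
The plan is to read this as the lower-tail, \emph{counting} strengthening of \Cref{cor:bound-pair-4Dm} --- in the way that the statement ``$\#\{v\in V_N:h_v\ge m_N-s\}\ge e^{cs}$ off an event of probability $\le Ce^{-e^{cs}}$'' strengthens the convergence of $M_N$ --- and to prove it by the two-step scheme used throughout this section: first establish the analogue for the 4D MBRW by the branching recursion, then transfer it to $h^N$ through the Gibbs--Markov comparison. Two facts about $h^N$ are needed. (i) The \emph{doubly-exponential} lower tails $\mbP(h_{N,r}^{\diamond}\le 2m_N-s)\le Ce^{-e^{cs}}$ and $\mbP(M_N\le m_N-s)\le Ce^{-e^{cs}}$; these are \emph{not} instances of Gaussian concentration (which would only give $e^{-cs^{2}/\log N}$, useless when $s$ is of order $\log\log r$) but follow, on the MBRW, from the branching recursion --- the left tail of a branching maximum solves a recursion whose relevant fixed point decays doubly exponentially. (ii) A robust ``fat-cluster'' estimate: off an event of probability $\le Ce^{-e^{cs}}$, near every site $w$ --- within a distance that is small compared with $r$ in the range of parameters where the bound is non-trivial --- there are at least $e^{cs}$ sites $u$ with $h_u\ge h_w-s$; this is the counting companion of (i) and, again, rests on the same recursion.

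Granting (i)--(ii), put $s=\tfrac13\lambda\log\log r$, and pick a dyadic working scale in $[Cr,N/(Cr)]$ for a suitable absolute constant $C$ (possible whenever $\Xi_{N,r}$ is non-trivial, which is all we need). Off an event of probability $\le Ce^{-e^{cs}}$, the first bound in (i) yields a pair $(w,v_0)$ at that scale with $h_w+h_{v_0}\ge 2m_N-s$; applying (ii) at $w$ yields $\ge e^{cs}$ sites $u$ with $h_u\ge h_w-s$ and $|u-w|$ small compared with $r$. Each such $u$ then has $h_u+h_{v_0}\ge (h_w+h_{v_0})-s\ge 2m_N-2s=2m_N-\tfrac23\lambda\log\log r\ge 2(m_N-\lambda\log\log r)$, while $|u-v_0|$ differs from $|w-v_0|$ by only a bounded fraction of $r$, so --- the working scale having been chosen with exactly that slack --- $(u,v_0)\in\Xi_{N,r}$. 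Hence the set of these pairs is an admissible $A$ with $|A|\ge e^{cs}=(\log r)^{c\lambda/3}>\log r$ once $\lambda\ge C$, and summing the two exceptional probabilities gives the asserted bound with a new constant $c$. The hypotheses $r,\lambda\ge C$ and ``all $N\ge1$'' cause no trouble: once $N/r$ is bounded the right-hand side already exceeds $1$.

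I expect the main obstacle to be the transfer of (i), and especially of the counting estimate (ii), from the 4D MBRW to the membrane field: this forces a dyadic-scale Gibbs--Markov decomposition of $h^N$ and control of the coarse (boundary) field and of the residual correlations through \Cref{prop:4D-cov}, \Cref{prop:var-decomp} and the sprinkling lemma \Cref{lem:sprinkling}, precisely because --- in contrast with the 2D GFF --- the bi-Laplacian Green's function carries no random-walk representation; this is exactly the extra intricacy over the GFF noted in the introduction. Pushing the branching recursion through the 16-ary tree of \Cref{Def-BRW} while keeping track of the correct constant $c$ and the resulting doubly-exponential shape, and the routine Euclidean-versus-dyadic bookkeeping that keeps the separation of $(u,v_0)$ inside $[r,N/r]$, are the remaining, more mechanical, points.
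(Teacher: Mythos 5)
Your plan hinges on two inputs, and the second one, the ``fat--cluster'' counting estimate (ii) --- that near a near-maximal site $w$ there are at least $e^{cs}$ sites $u$ with $h_u\ge h_w-s$, \emph{with failure probability $\le Ce^{-e^{cs}}$} --- is a genuine gap. Nothing in the paper (nor in the cited comparison machinery) gives such a statement: the only level-set lower bound mentioned (the remark after \Cref{prop:tight}) is global, holds merely with probability tending to one, and is not localized around a prescribed point; and the tools you invoke for the transfer (Sudakov--Fernique, Slepian, \Cref{Cov-comp}, \Cref{lem:sprinkling}) control expectations and distribution functions of \emph{maxima}, not cardinalities of high level sets, so ``it rests on the same recursion'' does not survive the passage from the MBRW to $h^N$. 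Ingredient (i) is also problematic in a structural way: the doubly-exponential left tail $\mbP(h^{\diamond}_{N,r}\le 2m_N-s)\le Ce^{-e^{cs}}$ for the membrane field is exactly \Cref{cor:left-tail}, and in the paper it is \emph{derived from} the multi-box construction that proves the proposition preceding it --- one cannot get it from the MBRW by comparison alone, because the doubly-exponential decay comes from having $\sim(\log r)^{c\lambda}$ nearly independent chances, an independence structure that Gaussian comparison inequalities do not transport. So your scheme inverts the logical order: to justify (i) at the stated strength you would have to run precisely the argument that already produces the desired set $A$, and (ii) would then be superfluous (and remains unproved).

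For comparison, the paper's route (the unnamed proposition in \Cref{sec:geometry}, of which the present corollary is the quantitative form used later) builds $A$ directly: tile a strip of $V_N$ by $\sim(\log r)^{\lambda/100}$ well-separated boxes, apply the Gibbs--Markov decomposition $h^N=h^{C}+\phi^{C}$ in each, use \Cref{cor:bound-pair-4Dm} plus tightness of $h^{\diamond}$ to get a fine-field pair above $2m_N-2\lambda\log\log r$ in each box with probability $\ge 1/4$, exploit independence of the fine fields across boxes (a concentration bound over the boxes gives the doubly-exponential error), and control the coarse fields $\phi^{C}_u+\phi^{C}_v$ by an explicit covariance computation and Slepian. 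Note also that this produces $\log r$ pairs in \emph{distinct, well-separated} boxes, which is what the tightness step (\Cref{lem:tight1}, via $S_{\ell,N}$) actually consumes; your $A$, in which every pair shares the endpoint $v_0$ and the $u$'s cluster near $w$, satisfies the literal statement but would force an extra upper bound on $M_N$ to split the pair sums, and that bound fails with probability only exponentially (not doubly exponentially) small in $\lambda\log\log r$. (Your closing remark that the case of bounded $N/r$ is harmless because the right-hand side ``exceeds $1$'' is also not correct as written --- for large $r,\lambda$ the error term is tiny regardless of $N$ --- though this is an imprecision of the statement itself rather than of your argument.)
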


\section{Proof of the main results}\label{sec:proof}

In this section, we give the complete proof to characterize the extremal process of 4D membrane models. The proof is divided into four subsections. 
The two most essential subsections are sec. \Cref{sec:geometry} and \Cref{sec:invariance}, which deal with the geometry of near maxima and distributional invariance respectively. Furthermore, the 
tightness and uniqueness results are established in \Cref{sec:tight} and \Cref{sec:unique} respectively. 

\subsection{Geometry of the near maxima}\label{sec:geometry}

This subsection is devoted to the proof of \Cref{thm:4Dm-geo}. We gather all the tools in the previous section to prove it. Before that, we need the following lemma which is proved by the so-called sprinkling trick.
	\begin{lemma}[Sprinkling lemma]\label{lem:sprinkling}
	There exists a constant $C > 0$ such that, if
	\bequn
		\mbP\lp  \exists (v, u) \in \Xi_{N, r}, \   h_u^{N}, h_v^{N} \geq m_N- \lambda  \rp \geq \epsilon,
	\eequn
	for some $\epsilon, \lambda > 0$ and $N, r \in \mbZ_+$, then for any $\delta > 0$, setting $N'$ as the smallest power of $2$ larger than or equal to $C\lp \delta, \epsilon \rp N$ and $\gamma = C\lp \sqrt{\frac{C\lp \delta, \epsilon \rp}{\delta}} \rp$ with $C\lp \delta, \epsilon \rp = \frac{2\log \delta}{\log\lp 1 - \epsilon \rp}$, the following holds
	\bequn
		\mbP\lp h_{N', r}^{\diamond} \geq 2m_N- 2\lambda - \gamma \rp \geq 1 - \delta.
	\eequn
	\end{lemma}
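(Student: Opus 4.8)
The plan is a tiling‑plus‑Gibbs–Markov argument, which turns the hypothesis --- a fixed lower bound $\epsilon$ on the probability of a near‑maximal pair in a box of side $N$ --- into a high‑probability statement on a much larger box, at the price of the ``sprinkle'' $\gamma$. Write $K:=N'/N$ (so $K\asymp C(\delta,\epsilon)$) and partition a concentric sub‑cube of $V_{N'}$ lying at distance $\gtrsim N'$ from $\p V_{N'}$ into $m\geq C(\delta,\epsilon)$ pairwise well‑separated (distance $\geq 5$) translated copies $B_1,\dots,B_m$ of $V_N$. Applying the Gibbs--Markov property with $U:=\bigsqcup_i B_i$ gives $h^{N'}_v=\varphi_v+h^{(i)}_v$ for $v\in B_i$, where $\varphi:=\mbE[h^{N'}\mid\mcF_{V_{N'}\setminus U}]$, the fields $h^{(i)}:=h^{B_i,N'}$ are independent copies of $h^{N}$, and $\varphi\perp(h^{(i)})_i$; moreover for $v\in B_i$ the range‑$2$ property gives $\varphi_v=\mbE[h^{N'}_v\mid\mcF_{\p_2 B_i}]$, and orthogonality of the two summands gives the identity $\Var\varphi_v=\Var h^{N'}_v-\Var h^{(i)}_v$. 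For each $i$ let $E_i$ be the event that $h^{(i)}$ has a pair $(u,v)$ with $r\leq|u-v|\leq N/r$ and $h^{(i)}_u\wedge h^{(i)}_v\geq m_N-\lambda$; by the hypothesis and independence, $\mbP(\bigcup_i E_i)\geq 1-(1-\epsilon)^m\geq 1-\delta/2$ by the choice of $m$ (since $(1-\epsilon)^{C(\delta,\epsilon)}=\delta^2$).

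On $E_i$ let $(u_i,v_i)$ be the admissible pair in $B_i$ maximizing $h^{(i)}_u+h^{(i)}_v$ (a.s.\ unique). On $\bigcup_i E_i$, choosing any such $i$ we have $(u_i,v_i)\in\Xi_{N',r}$ (as $N\leq N'$) and
\[
h^{N'}_{u_i}+h^{N'}_{v_i}=(\varphi_{u_i}+\varphi_{v_i})+(h^{(i)}_{u_i}+h^{(i)}_{v_i})\geq (\varphi_{u_i}+\varphi_{v_i})+2(m_N-\lambda),
\]
so it suffices to prove $\mbP\big(\bigcup_i E_i\ \text{and}\ \varphi_{u_i}+\varphi_{v_i}<-\gamma\ \text{for every }i\ \text{with }E_i\big)\leq\delta/2$. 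Since $\varphi\perp(h^{(i)})_i$, conditionally on $(h^{(i)})_i$ the indices with $E_i$ and the pairs $(u_i,v_i)$ are frozen, and $\varphi_{u_i}+\varphi_{v_i}$ is then a centered Gaussian of variance at most $4\Var\varphi_{u_i}=4\big(\Var h^{N'}_{u_i}-\Var h^{(i)}_{u_i}\big)\leq 4\big(\tfrac{8}{\pi^2}\log N'+O(1)-\Var h^{(i)}_{u_i}\big)$. The decisive point is that $u_i$, being a near‑maximum of $h^{(i)}$, concentrates in the bulk of $B_i$: near‑maximal points of the membrane model on $V_N$ avoid a boundary layer, so with high probability $\Var h^{(i)}_{u_i}=\tfrac{8}{\pi^2}\log N+O(1)$ and the above variance is $O(\log K)+O(1)$, i.e.\ bounded in terms of $\delta,\epsilon$. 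Feeding this together with the Gaussian tail, a union bound over the $m$ boxes, and Borell--TIS applied to $(\varphi_{b_i})_i$ (the centers $b_i$ of the $B_i$), whose metric entropy is controlled by $m$ and hence yields $\min_i\varphi_{b_i}\geq-O\big(\log K+\sqrt{\log K\cdot\log(1/\delta)}\big)$ with probability $\geq1-\delta/4$ --- transferred to nearby points via \Cref{prop:var-decomp} --- gives the claimed bound for $\gamma$ of the stated order.

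The delicate --- and, compared with \cite{ding2014extreme}, simpler --- part is controlling $\varphi$ at the \emph{random} near‑maximal pair rather than its minimum over $B_i$: near $\p B_i$ one has $\Var\varphi_v\asymp\tfrac{8}{\pi^2}\log\!\big(N'/\dist(v,\p B_i)\big)$, which grows with $N$, so $\min_{B_i}\varphi$ is useless. Instead one uses the orthogonality identity to play a small value of $\Var h^{(i)}_{u_i}$ (that is, $u_i$ close to $\p B_i$) against the exponentially small probability that $h^{(i)}$ is near‑maximal there: writing $\mbP(\varphi_{u_i}+\varphi_{v_i}<-\gamma)\leq \mbP(\varphi_{u_i}<-\gamma/2)+\mbP(\varphi_{v_i}<-\gamma/2)$ and summing over the possible positions weighted by $\mbP(h^{(i)}_u\geq m_N-\lambda)$, the two decays combine and the near‑boundary positions contribute negligibly. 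Assembling the three high‑probability events (some $E_i$ occurs; the associated $u_i,v_i$ lie in the bulk of $B_i$; $\varphi$ is not too negative there) produces, with probability at least $1-\delta$, a pair in $\Xi_{N',r}$ with $h^{N'}$‑sum at least $2m_N-2\lambda-\gamma$, whence $h_{N',r}^{\diamond}\geq 2m_N-2\lambda-\gamma$, as desired.
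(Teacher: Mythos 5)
Your overall strategy (tile a larger box, use the Gibbs--Markov decomposition, exploit independence of the fine fields across boxes, then control the binding field at the near-maximal pair) is the same sprinkling philosophy as the paper, but your one-step decomposition with boxes $B_i$ that are exact copies of $V_N$ creates a gap that your proposed fix does not close. The problem is precisely the one you flag: conditionally on $E_i$, the pair $(u_i,v_i)$ may lie close to $\p B_i$, where $\Var\,\varphi_{u_i}$ is of order $\frac{8}{\pi^2}\log\lp N'/\dist(u_i,\p B_i)\rp$ and hence unbounded in $N$. Your remedy --- a union bound over positions weighted by $\mbP\lp h^{(i)}_u \geq m_N-\lambda\rp$ --- cannot deliver the statement as claimed: the only variance information the paper provides near the boundary is the uniform upper bound $\Var\lp h^N_u\rp \leq \frac{8}{\pi^2}\log N + \alpha_0$, so the weight at a site in a boundary layer of width $D$ is only of order $N^{-4}e^{c\lambda}$, and the layer contains order $N^3D$ sites; the contribution is negligible only for $D \lesssim N e^{-c\lambda}$, and for the surviving sites one still has $\Var\,\varphi_u \lesssim \log(N'/N)+c\lambda$, which forces a sprinkle $\gamma$ of order $\sqrt{\lambda}$ rather than a constant depending only on $(\delta,\epsilon)$ as the lemma requires (and as the application needs, uniformly in $N$). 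The sharper claim you lean on --- that near-maximal points avoid a macroscopic boundary layer so that $\Var\, h^{(i)}_{u_i} = \frac{8}{\pi^2}\log N + O(1)$ with high probability --- is not available in the paper, is not a consequence of the hypothesis (which only asserts existence of \emph{some} pair above $m_N-\lambda$), and fails uniformly in $\lambda$: already for $\lambda \geq C\log(1/\eta)$ the field exceeds $m_N-\lambda$ at distance $\eta N$ from $\p B_i$ with probability bounded away from zero. The auxiliary Borell--TIS step for the centers $(\varphi_{b_i})_i$ together with \Cref{prop:var-decomp} does not repair this, since that proposition only compares points within a small concentric neighborhood of the center, not points near $\p B_i$.

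The missing idea is a buffer that makes the random pair \emph{deterministically} bulk-located. The paper plants a concentric box $V_i^*$ of side $N$ inside a box $V_i$ of side $8N$, applies the hypothesis to $h^{V_i^*}\overset{\text{law}}{=}h^N$, and transfers it to $h^{V_i}$ at the cost of a factor $\tfrac12$ (the binding field between $V_i^*$ and $V_i$ is centered Gaussian, independent of $h^{V_i^*}$, so its sum at the argmax pair is nonnegative with probability at least $\tfrac12$); only then does it compare $V_i$ with $V_{N'}$. Since the pair lies in $V_i^*$, it is at distance of order $N$ from $\p V_i$, so by \Cref{prop:4D-cov} the binding field $\mbE\lb h^{N'}_{\tau^j}\,\big|\,h^{N'}_v, v\in V_{N'}\setminus V_i\rb$ has variance $\frac{8}{\pi^2}\log(N'/N)+O(1)=O(k)$, depending only on $(\delta,\epsilon)$, and a Chebyshev bound yields the stated $\gamma$ uniformly in $\lambda$ and $N$. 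Your argument saves the factor $\tfrac12$ by applying the hypothesis directly to the fine fields, but that saving is exactly what exposes you to the boundary; inserting the two-level decomposition (hypothesis box strictly inside a larger box, then the larger box inside $V_{N'}$) is the step you need to add.
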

	\begin{proof}
		The proof uses a kind of bootstrap method based on the Gibbs-Markov property of the field. Let $N' = 2^{k + 3}N$ where $k = \lceil \log_2C\lp \delta, \epsilon \rp - 3\rceil$. Let $V_i \subset V_{N'}, i = 1, 2, \cdots, 2^k$ be $2^k$ disjoint 4-cubes in $V_{N'}$ with side length $8N$. Let $V_i^* \subset V_i, i = 1, 2, \cdots, 2^k$ be $2^k$ 4-cubes in $V_{i}$ respectively with side length $N$. We require that $V_i, V_i^*$ are concentric for all $i$. 
		
		Firstly, let $h_{v}^{V_i} = h_{v}^{V_i^*} + \mbE\lb h_{v}^{V_i} \Big| h_{v}^{V_i}, v \in V_i \backslash V_i^* \rb$ be the Gibbs-Markov decomposition of the 4D membrane models on $V_i^* \subset V_i$. Using the assumption and the fact that $\mbE\lb h_{v}^{V_i} \Big| h_{v}^{V_i}, v \in V_i \backslash V_i^* \rb$ is a centered Gaussian process, one has (recall the definition of $\Xi_{N, r}$ in \Cref{def:geo-set})
		\bequn
			\begin{aligned}
				\mbP\lp  \exists v, u \in V_i: r \leq \lv v - u \rv \leq \frac{N}{r}, \   h_u^{V_i} + h_v^{V_i} \geq 2m_N- 2\lambda  \rp
				 \geq \frac{\epsilon}{2}.
			\end{aligned}
		\eequn
		
		For each cube $V_i, i = 1, 2, \cdots, 2^k$, let the Gibbs-Markov decomposition on $V_i \subset V_{N'}$ be
		\bequn
			h_{v}^{N'} = h_{v}^{V_i} + \mbE\lb h_{v}^{N'} \Big| h_{v}^{N'}, v \in V_{N' }\backslash V_i \rb,
		\eequn 
		here we directly denote the first term as 4D membrane models on $V_{8N} = V_i$. Then, we have that for each $i = 1, 2, \cdots, 2^k$
		\bequn
			\mbP\lp  \exists v, u \in V_i: r \leq \lv v - u \rv \leq \frac{N}{r}, \   h_u^{V_i}, h_v^{V_i} \geq m_N- \lambda  \rp \geq \frac{\epsilon}{2}.
		\eequn
		In each cube $V_i$, denote $\lp \tau_{i}^1, \tau_{i}^2 \rp = \arg\max_{v, u \in V_i, r \leq \lv v - u \rv \leq \frac{N}{r}} \lbb h_u^{V_i} + h_v^{V_i} \rbb$. Therefore, one has 
		\bequn
			\mbP\lp  h_{\tau_{i}^1}^{V_i} + h_{\tau_{i}^2}^{V_i} \geq 2m_N- 2\lambda  \rp \geq \frac{\epsilon}{2}.
		\eequn
		Now, by the conditional independency between $V_i$, we conclude that
		\bequn
			\mbP\lp  h_{\tau^1}^{V_i} + h_{\tau^2}^{V_i} \geq 2m_N- 2\lambda  \rp \geq 1 - \lp 1 - \frac{\epsilon}{2} \rp^{2^k},
		\eequn
		where $ \lp \tau^1, \tau^2 \rp= \arg\max_{1 \leq i \leq 2^k} \lbb h_{\tau_i^1}^{V_i} + h_{\tau_i^2}^{V_i} \rbb $. Now, return to the Gibbs-Markov decomposition at these two points, one has that
		\bequn
		\begin{aligned}
				& \ \Var\lp \mbE\lb h_{\tau^j}^{N'} \Big| h_{\tau^j}^{N'}, v \in V_{N' }\backslash V_i \rb \rp = \Var\lp h_{\tau^j}^{N'} \rp - \Var\lp h_{\tau^j}^{V_i} \rp \\
				= & \  \frac{8}{\pi^2}\log N' - \frac{8}{\pi^2}\log N + O\lp 1 \rp = \frac{8}{\pi^2}\log \frac{N'}{N} + O\lp 1 \rp = O\lp k \rp, \quad j = 1, 2.
		\end{aligned}
		\eequn
		where we use \Cref{prop:4D-cov}, \Cref{far-bound}, and the fact that $\tau^1, \tau^2$ are both far from the boundary in either field $h_{v}^{N'}$ or $h_{v}^{V_i}$. Therefore, by Markov inequality,
		one has
		\bequn
			\begin{aligned}
				& \ \mbP\lp  h_{\tau^1}^{N'} + h_{\tau^2}^{N'} \geq 2m_N- 2\lambda - \gamma  \rp 		\\
				\geq & \  \mbP\lp \mbE\lb h_{\tau^1}^{N'} \Big| h_{\tau^1}^{N'}, v \in V_{N' }\backslash V_i \rb + \mbE\lb h_{\tau^2}^{N'} \Big| h_{\tau^2}^{N'}, v \in V_{N' }\backslash V_i \rb \geq - \gamma \rp 	\\& \cdot	 \ \mbP\lp  h_{\tau^1}^{V_i} + h_{\tau^2}^{V_i} \geq 2m_N- 2\lambda  \rp 	\\
				\geq & \ \lp 1 - \lp 1 - \frac{\epsilon}{2} \rp^{2^k} \rp \lp 1 - \frac{O\lp k \rp}{\gamma^2} \rp \geq 1 - \delta,
			\end{aligned}
		\eequn
		by our assumption of these constants.
	\end{proof}
	
	\begin{remark}
		Our proof of the sprinkling lemma follows the general philosophy of \cite{ding2014extreme}, but significantly streamlines a technical component of the original argument. Specifically, while their approach relies heavily on potential theory and a random walk representation to analyze the variance of the conditional field $\mathbb{E}\left[ h_{\tau^j}^{N'} ,\big|, h_{\tau^j}^{N'},, v \in V_{N'} \setminus V_i \right]$, we compute this variance explicitly.
	\end{remark}
    
	Next, we state several lemmas, which are exactly parallel to those in \cite{ding2014extreme}. The first lemma is extremely similar to lemma 4.5 in it, so we omit its proof. 
	\begin{lemma}
  The sequence of random variables $\lbb \frac{h_{N, r}^{\diamond} - \mbE \lb h_{N, r}^{\diamond} \rb}{\log \log r}\rbb$
	is tight for $r \leq N$.
 	\end{lemma}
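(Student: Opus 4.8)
The plan is to prove tightness of $\lbb \frac{h_{N, r}^{\diamond} - \mbE \lb h_{N, r}^{\diamond} \rb}{\log\log r} \rbb$ by bounding its upper and lower tails separately, in each case reducing matters to an estimate already available: the upper tail to \Cref{thm:convergence-in-law} together with \Cref{cor:bound-pair-4Dm}, and the lower tail to \Cref{cor:several-maxima-tail} together with \Cref{cor:bound-pair-4Dm}. The two substantive inputs are thus that the centering $\mbE \lb h_{N, r}^{\diamond} \rb$ lies at distance $\Theta\lp \log\log r \rp$ below $2m_N$, and that downward deviations of $h_{N, r}^{\diamond}$ below $2m_N$ on the scale $\log\log r$ are doubly-exponentially rare; both of these are already established. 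What then remains is bookkeeping with the $\log\log r$ normalization, which is where I would be most careful, though I do not expect a genuine obstacle. Throughout I would restrict to $\lp N, r \rp$ with $\Xi_{N, r} \neq \emptyset$ and $r \geq r_0$ for a suitable absolute constant $r_0$ (so that $\kappa_0 := \log\log r_0 > 0$ and the quoted estimates are applicable), this being the only range in which the normalization is meaningful; the bound $\log\log r \geq \kappa_0$ then lets me absorb any additive error of size $O\lp \log\log r \rp$ into a multiplicative constant, so in particular the constant $C$ of \Cref{cor:bound-pair-4Dm} contributes at most $C/\kappa_0$ after dividing by $\log\log r$.

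For the upper tail, directly from the definition $h_{N, r}^{\diamond} \leq 2\max_{v \in V_N} h_v^{N} = 2M_N$, since dropping the constraint $r \leq \norml u - v \normr \leq N/r$ only enlarges the maximum; combined with the lower bound $\mbE \lb h_{N, r}^{\diamond} \rb \geq 2m_N - c_2\log\log r - C$ of \Cref{cor:bound-pair-4Dm} this gives
\[
	\frac{h_{N, r}^{\diamond} - \mbE \lb h_{N, r}^{\diamond} \rb}{\log\log r} \leq \frac{2\lp M_N - m_N \rp}{\log\log r} + c_2 + \frac{C}{\log\log r}.
\]
Since $M_N - m_N$ does not depend on $r$ and is tight by \Cref{thm:convergence-in-law}, while $\log\log r \geq \kappa_0 > 0$, the right-hand side is a tight family of random variables, which controls the upper tail of the normalized quantity.

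For the lower tail I would use instead the upper bound $\mbE \lb h_{N, r}^{\diamond} \rb \leq 2m_N - c_1\log\log r + C$ of \Cref{cor:bound-pair-4Dm}, which yields, for every $t > 0$,
\[
	\mbP\lp h_{N, r}^{\diamond} \leq \mbE \lb h_{N, r}^{\diamond} \rb - t\log\log r \rp \leq \mbP\lp h_{N, r}^{\diamond} \leq 2m_N - \lp c_1 + t - C/\kappa_0 \rp\log\log r \rp.
\]
On the other hand, \Cref{cor:several-maxima-tail} applied with $\lambda = \lp c_1 + t - C/\kappa_0 \rp/2$ guarantees, on an event of probability at least $1 - Ce^{-e^{c\lambda\log\log r}}$, a set $A \subset \Xi_{N, r}$ with $\lv A \rv \geq 1$ on which $h_u^{N} + h_v^{N} > 2m_N - 2\lambda\log\log r$ for every $\lp u, v \rp \in A$; since each such pair satisfies the distance constraint defining $h_{N, r}^{\diamond}$, on this event $h_{N, r}^{\diamond} > 2m_N - \lp c_1 + t - C/\kappa_0 \rp\log\log r$. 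Hence the probability on the right above is at most $Ce^{-e^{c\lambda\log\log r}} \leq Ce^{-e^{c\lambda\kappa_0}}$, which tends to $0$ as $t \to \infty$, uniformly in $N$ and in the admissible $r$ (for $t$ large one has $\lambda \geq C$, as \Cref{cor:several-maxima-tail} requires). Together with the upper-tail bound, this gives the asserted tightness.
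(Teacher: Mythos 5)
Your proof is correct and follows what is essentially the intended argument: the paper omits the proof of this lemma (deferring to Lemma 4.5 of \cite{ding2014extreme}), and the natural route is exactly yours --- upper tail from $h_{N,r}^{\diamond}\le 2M_N$ together with the lower bound on $\mathbb{E}[h_{N,r}^{\diamond}]$ in \Cref{cor:bound-pair-4Dm} and the tightness of $M_N-m_N$ from \Cref{thm:convergence-in-law}, lower tail from the doubly exponential estimate of \Cref{cor:several-maxima-tail} combined with the upper bound on $\mathbb{E}[h_{N,r}^{\diamond}]$. Your restriction to $r\ge r_0$ and $\Xi_{N,r}\ne\emptyset$ is harmless (for intermediate values of $r$ one can reduce to $r=r_0$ by noting that $h_{N,r}^{\diamond}$ is nonincreasing in $r$), so no genuine gap remains.
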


	The next theorem also uses the idea of sprinkling to establish the result. Recall the definition of $\Xi_{N, r}$ in \Cref{def:geo-set}.
 	\begin{proposition}
		There exist absolute constants $C, c > 0$ such that for all $N \in \mbZ_+$ and $\lambda, r \geq C$
	\bequn	
		\mbP\lp \exists A \subset \Xi_{N, r}, \lv A \rv \geq \log r: \forall \lp u, v \rp \in A: h_{u}^{N} + h_{v}^{N} \geq 2m_N- 2\lambda \log \log r \rp \geq 1 - Ce^{-\sqrt{\lambda} \log\log r}.
	\eequn
	\end{proposition}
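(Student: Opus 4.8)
The plan is to bootstrap the single‑pair estimate of \Cref{cor:bound-pair-4Dm} into a statement about \emph{many} well‑separated pairs, by peeling off disjoint sub‑cubes and exploiting the conditional independence furnished by the Gibbs--Markov property, exactly in the spirit of the sprinkling argument behind \Cref{lem:sprinkling}. The starting point is that, combining the lower bound $\mbE h_{N, r}^{\diamond} \geq 2m_N - c_2\log\log r - C$ of \Cref{cor:bound-pair-4Dm} with tightness of $\lbb (h_{N, r}^{\diamond} - \mbE h_{N, r}^{\diamond})/\log\log r \rbb$, one gets: for every $\delta \in (0,1)$ there is $a_{\delta} < \infty$ with $\mbP(h_{L, r}^{\diamond} \geq 2m_L - a_{\delta}\log\log r) \geq 1 - \delta$ for all $L \geq r$. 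Since this will be used on cubes of side $\ell$, the only arithmetic to record is $m_{\ell} = m_N - \tfrac{8}{\pi}\log(N/\ell) + O(1)$.

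Next I would fix an integer $M$ (chosen at the very end) and place $M$ pairwise disjoint cubes $B_1, \dots, B_M \subset V_N$ of side $\ell \asymp N M^{-1/4}$, all inside $[\delta N, (1-\delta)N]^4$ and at mutual $\ell^{\infty}$‑distance $\geq \ell$. On each $B_i$ the Gibbs--Markov property decomposes $h_v^{N} = h_v^{B_i, N} + \mbE[\,h_v^{N} \mid \mcF_{V_N \backslash B_i}\,]$ with $h^{B_i, N} \overset{\text{law}}{=} h^{B_i}$ and, crucially, the inner fields $h^{B_1, N}, \dots, h^{B_M, N}$ mutually independent (the $B_i$ are disjoint and separated, so by the range‑$2$ reduction $h^{B_j,N} \in \mcF_{V_N\backslash B_i}$ for $j\neq i$). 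After one further inner peel onto a concentric cube of side $\asymp \ell$ in each $B_i$ — which is exactly what forces the selected pair to sit deep inside $B_i$ — Step~1 produces a pair $(\tau_i^1, \tau_i^2)$, lying deep in $B_i$ and at mutual distance in $[r, N/r]$, with $h_{\tau_i^1}^{B_i, N} + h_{\tau_i^2}^{B_i, N} \geq 2m_N - O(\log M) - O(\log\log r)$; call this event $E_i$, of probability $\geq p_0$ for some absolute $p_0 > 0$. As the $E_i$ are independent, a Chernoff bound gives at least $\log r$ of them except with probability $e^{-cM}$.

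On that event I set $A = \lbb (\tau_i^1, \tau_i^2) : E_i \text{ holds} \rbb \subset \Xi_{N, r}$ and then add back the harmonic corrections. For $v$ deep in $B_i$ the term $\mbE[\,h_v^{N} \mid \mcF_{V_N \backslash B_i}\,]$ is centered Gaussian of variance $\tfrac{8}{\pi}\log(N/\ell) + O(1) \asymp \log M$ (this is where depth matters: near $\partial B_i$ the variance of the correction would be $O(\log N)$ instead); although these fields are heavily correlated across $i$, that is harmless, because each $\tau_i^j$ is measurable with respect to the independent inner field $h^{B_i, N}$, so a union bound over the $\leq 2M$ selected points yields that $\mbE[\,h_{\tau_i^1}^{N} \mid \cdot\,] + \mbE[\,h_{\tau_i^2}^{N} \mid \cdot\,] \geq -O(\lambda\log\log r)$ for \emph{all} $i$ except with probability $M\exp(-c\lambda^2(\log\log r)^2/\log M)$. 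On the complement of both bad events, $|A| \geq \log r$ and every pair in $A$ satisfies $h_u^{N} + h_v^{N} \geq 2m_N - 2\lambda\log\log r$, which is the assertion.

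The remaining task is to choose $M$ so that $e^{-cM}$ and $M\exp(-c\lambda^2(\log\log r)^2/\log M)$ are both $\leq \tfrac12 Ce^{-\sqrt{\lambda}\log\log r}$, and this is where the real work lies. The constraints compete: the cubes must fit inside $V_N$ and still be large enough both to host a pair at separation in $[r,N/r]$ — which needs $\ell \gtrsim r^{2}$, i.e.\ $\log M \lesssim \log(N/r^{2})$ — and to reach within $O(\lambda\log\log r)$ of $m_N$ — which needs $\log M \lesssim \lambda\log\log r$ — yet $M$ must be large enough for the Chernoff bound to beat the target, so the number of independent sub‑cubes has to grow with $\lambda$, roughly $M \asymp \exp\!\big(\min(\sqrt{\lambda}\log\log r,\ c\log(N/r^{2}))\big)$, and a couple of parameter ranges must be glued together. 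The two degenerate corners — $\lambda\log\log r \gtrsim m_N$, where $2m_N - 2\lambda\log\log r$ lies below the range of the field so the inequality holds for every pair and the statement is trivial, and $r \asymp \sqrt{N}$, where $\log\log r \asymp \log\log N$ and one simply takes $M$ a large constant — are handled directly. I expect this calibration, together with the (standard) check that the extra inner peel really keeps the re‑added correction at variance $O(\log M)$ rather than $O(\log N)$, to be the only genuine obstacle; the rest is a transcription of the Gibbs--Markov/sprinkling mechanism already used for \Cref{lem:sprinkling}.
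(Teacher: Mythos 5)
Your proposal is correct in substance (in the main parameter regime) but handles the crucial step by a genuinely different mechanism than the paper. Both arguments tile $V_N$ by mesoscopic boxes, use the Gibbs--Markov property to get independent inner fields, and invoke \Cref{cor:bound-pair-4Dm} plus tightness to produce one near-maximal pair per box. The paper, however, groups its boxes into blocks and only asks the coarse-field correction $\phi^C_u+\phi^C_v$ to be \emph{nonnegative} at a single selected pair per block; since the corrections of different boxes are correlated, this forces the dedicated covariance estimate $\Cov\lp\phi^C_u,\phi^{C'}_v\rp = O(1/\sqrt{\lambda})\Var\lp\phi^C_u\rp$ and an application of Slepian's comparison to show that all corrections in a block are simultaneously negative with probability at most $Ce^{-\sqrt{\lambda}\log\log r}$. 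You instead spend part of the budget $2\lambda\log\log r$ on the corrections: taking $\log M\asymp\sqrt{\lambda}\log\log r$ makes the inner-field deficit $O(\sqrt{\lambda})\log\log r$ while the correction at a point kept deep in its box (your extra inner peel, with the intermediate field absorbed into $p_0$ by a sign argument, exactly as in the variance computation of \Cref{lem:sprinkling} and \Cref{prop:4D-cov}) has variance $O(\log M)=O(\sqrt{\lambda}\log\log r)$, so a plain Gaussian tail plus a union bound over the $2M$ selected points gives $Me^{-c\lambda^{3/2}\log\log r}\ll e^{-\sqrt{\lambda}\log\log r}$, and a Chernoff bound over the $M=(\log r)^{\sqrt{\lambda}}$ genuinely independent boxes supplies the count $\geq\log r$. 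What your route buys is the complete elimination of the Slepian/decorrelation lemma (the paper's most delicate ingredient here) in exchange for the calibration of $M$ against $\lambda$; what the paper's route buys is that the correction only needs a sign condition, so no such calibration is needed.

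One caveat: your treatment of the corner $r\asymp\sqrt{N}$ is not right as stated. Your scheme yields at most one pair per box and needs boxes of side $\gtrsim r^2$ (so that $h^{\diamond}_{\ell,r}$ is nondegenerate), so when $r^2\geq cN$ at most $O(1)$ disjoint boxes fit, whereas the statement demands $\lv A\rv\geq\log r\to\infty$; ``take $M$ a large constant'' therefore cannot close this regime, and one would need to extract many pairs per box or exploit clustering of high points there. To be fair, the paper's own construction (boxes of side $N(\log r)^{-\sqrt{\lambda}/100}$, pairs at distance $\geq r$ inside them) is subject to essentially the same implicit restriction and is silent about it, so this is a boundary defect shared with the source rather than a flaw specific to your approach; still, the claimed fix for that corner should be dropped or repaired.
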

	\begin{proof}
	
	Denote by $R = N \lp \log r \rp^{-\frac{\sqrt{\lambda}}{100}}, l = N \lp \log r \rp^{-\frac{\lambda}{100}}$. Assume that the left bottom corner of $V_N$ is $\lp 0, 0, 0, 0 \rp$. Define $o_i = \lp il, 2R \rp, i = 1, 2, \cdots, M = \lfloor \frac{N}{2l} \rfloor = \frac{\lp \log r \rp^{\frac{\lambda}{100}}}{2}$. Let $C_i$ be a discrete ball of radius $r$ centered at $o_i$ and let $B_i \subseteq C_i$ be a box of side length $\frac{R}{8}$ centered at $o_i$. We next regroup the $M$ boxes into $m$ blocks. Let $\mcC_j = \lbb C_i: \lp j - 1 \rp m < i < jm \rbb$ and $\mcB_j = \lbb B_i: \lp j - 1 \rp m < i < jm \rbb$.
	
	Now we consider the maximal sum over pairs of the GFF in each $\mcB_j$. For ease of notation, we fix $j = 1$ and write $\mcB = \mcB_1$ and $\mcC = \mcC_1$. For each $B \in \mcB$, write the Gibbs-Markov decomposition 
	of $C$ as $h_{v}^{N} = h_{v}^{C} + \phi_v^{C}, v \in B$ and define $\lp \tau_{1, B}, \tau_{1, B} \rp = \arg\max_{\lp u, v \rp \in \lp B \times B \rp \cap \Xi_{N, r}}\lbb h_{u}^{C} + h_{v}^{C}\rbb$. Since $\lambda$ 
	is great enough, by \Cref{cor:bound-pair-4Dm} and the tightness of $\lbb \frac{h_{N, r}^{\diamond} - \mbE \lb h_{N, r}^{\diamond} \rb}{\log \log r}\rbb$, we have that
	\bequn
		\mbP\lp h_{\tau_{1, B}}^{C} + h_{\tau_{2, B}}^{C} \geq 2m_N- 2\lambda \log \log r \rp \geq  \frac{1}{4}.
	\eequn
	
Let $W = \lbb \lp \tau_{1, B}, \tau_{1, B} \rp : g_{\tau_{1, B}}^1 \geq  2m_N- 2\lambda \log \log r  , B \in \mcB \rbb$. 
By independence, a standard concentration argument gives that for an absolute constant 
$c > 0, \mbP\lp \lv W \rv \leq \frac{m}{8} \rp \leq e^{-cm}.$ It remains to study the process 
$\lbb \phi_u^{C} + \phi_v^{C} : \lp u, v \rp \in W \rbb.$ If $\phi_u^{C} + \phi_v^{C} \geq 0$, we have that $h_{\tau_{1, B}}^{N} + h_{\tau_{2, B}}^{N} \geq 2m_N- 2\lambda \log \log r$. 
By next lemma, we have that $$\mbP\lp \phi_u^{C} + \phi_v^{C} \leq 0, \forall \lp u, v \rp \in W \rp \leq Ce^{-\sqrt{\lambda} \log\log r}.$$
		
		Consequently, we have 
		\bequn
			\mbP\lp \max_{B \in \mcB }h_{\tau_{1, B}}^{N} + h_{\tau_{2, B}}^{N} \geq 2m_N- 2\lambda \log \log r \rp \geq 1 - Ce^{-\sqrt{\lambda} \log\log r}.
		\eequn
		We denote 
		\bequn
			\lp \tau_{1, j}, \tau_{1, j} \rp = \arg\max\lbb  \max_{B \in \mcB_j }h_{\tau_{1, B}}^{N} + h_{\tau_{2, B}}^{N} \rbb,
		\eequn
		and let $A := \lp \lp \tau_{1, j}, \tau_{1, j} \rp, j = 1, 2, \cdots, \frac{M}{m} \rp$. It is simple to conclude that A satisfies the condition of proposition with probability at least $1 - Ce^{-\sqrt{\lambda} \log\log r}.$
	\end{proof}
	The following is a key lemma in the previous proposition. It simplifies a tricky calculation based on potential theory in \cite{ding2013exponential} and then uses Slepian's comparison theorem to obtain the desired results.
	\begin{lemma}
		Let $U \subseteq \bigcup_{B \in \mcB} \lp B \times B \rp$ such that $\lv U \cap \lp B \times B \rp \rv \leq 1, \forall B \in \mcB$. Assume $\lv U \rv \geq \frac{m}{8}$,
		\bequn
			\mbP\lp \phi_u^{C} + \phi_v^{C} \leq 0, \forall \lp u, v \rp \in W \rp \leq Ce^{-\sqrt{\lambda} \log\log r}.
		\eequn
	\end{lemma}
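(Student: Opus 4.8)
The family $\lbb \phi_u^{C}+\phi_v^{C}:(u,v)\in U\rbb$ is centred Gaussian (each $\phi^{C}$ is a conditional expectation of the centred field $h^{N}$), so the assertion is an upper bound on $\mbP\bigl(\max_{(u,v)\in U}(\phi_u^{C}+\phi_v^{C})\le 0\bigr)$, where for a box $B\in\mcB$ we write $C=C(B)\in\mcC$ for the ball containing it. By the Gibbs--Markov property each $\phi^{C}$ is independent of the corresponding $h^{C}$, so after conditioning on all the $h^{C}$'s the set $U$ (in the application, the high-$h^{C}$ maximising pairs, denoted $W$ above) may be taken deterministic with $|U|\ge m/8$. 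Writing $\Phi_B:=\phi_{u_B}^{C}+\phi_{v_B}^{C}$ for the unique pair of $U$ in $B\times B$, the plan is to peel off the block-common scales, $\Phi_B=\Upsilon+X_B$, where $\Upsilon$ is the single Gaussian carrying the scales between the block diameter and $N$, common to all $B$, and $\lbb X_B\rbb_{B}$ is the residual within-block field, and then to use
\[
\mbP\bigl(\max_{B}\Phi_B\le 0\bigr)\ \le\ \mbP(\Upsilon\le -y)\ +\ \mbP\bigl(\max_{B}X_B\le y\bigr)
\]
for a suitable threshold $y>0$.

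The one genuinely new ingredient — the point at which we avoid the random-walk/potential-theory computation of \cite{ding2013exponential} — is the covariance structure of $\lbb\Phi_B\rbb$. Writing $\phi^{C}_w=h_w^{N}-h_w^{C}$ and applying \Cref{prop:4D-cov} to $h^{N}$ on $V_N$ and to $h^{C}$ on the sub-domain $C$, one reads off $\Var(\Phi_B)$ and the within-box covariances from the explicit asymptotics. For two boxes lying in distinct balls of $\mcC$, which are pairwise disjoint and at graph distance $>2$, a short Gibbs--Markov peeling — using $\mbE[h^{N}_\cdot\mid (C)^c]=\mbE[h^{N}_\cdot\mid\mcF_{\partial_2 C}]$ (range-$2$ locality of the bi-Laplacian) and that $\partial_2 C$ lies outside the other ball — yields the clean identity $\Cov(\phi^{C}_w,\phi^{C'}_{w'})=\Cov(h_w^{N},h_{w'}^{N})$. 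Hence $\Cov(\Phi_B,\Phi_{B'})$ is a sum of four membrane-field covariances, each $\le\tfrac{8}{\pi^2}\log\tfrac{N}{1+|w-w'|}+O(1)$ by \Cref{prop:4D-cov}; so the part of $\Cov(\Phi_B,\Phi_{B'})$ above the block scale is exactly $\Var(\Upsilon)$, and $\lbb X_B\rbb_{B}$ has the covariance profile of a $4$D branching random walk over $\asymp\log m$ generations with $\asymp|U|$ leaves. (The same peeling shows that the $h^{C}$'s are independent across distinct balls of $\mcC$, which justifies the conditioning above and the usual independence-based concentration for $|U|$.)

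To finish, bound $\mbP(\Upsilon\le -y)\le\exp\bigl(-y^2/(2\Var(\Upsilon))\bigr)$ by a one-dimensional Gaussian tail, and bound $\mbP(\max_B X_B\le y)$ by comparing $\lbb X_B\rbb$ with a genuine $4$D (M)BRW via Slepian's inequality and the noise-addition device of \Cref{sec:mBRW}, so that the left-tail/tightness estimates recorded there apply (the BRW maximum over $\asymp|U|\ge m/8$ particles concentrates near $\asymp\log m$, which for the calibrated choice of $m$ exceeds $y$). Since $\Var(\Upsilon)\asymp\log(N/D)$, with $D$ the block diameter, equals $\Theta(\log\log r)$ uniformly in $N$ — because $N/D$ is a power of $\log r$ fixed in the construction of the preceding proposition — choosing $y$ of the right order (a power of $\lambda$ times $\log\log r$, still below the BRW scale $\asymp\log m$) makes both terms $\le\tfrac C2 e^{-\sqrt\lambda\log\log r}$. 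I expect the real difficulty to lie in the middle step: running this covariance bookkeeping uniformly in $N$, in the placement of the boxes, and over all admissible pairs $(u,v)$ with $r\le|u-v|\le N/r$ inside a box, and verifying that the resulting scale budget is wide enough to yield the exponent $\sqrt\lambda$ rather than a weaker one when balanced against the Gaussian tail of $\Upsilon$; once the covariances are pinned down, the two concluding estimates are routine given \Cref{sec:mBRW}.
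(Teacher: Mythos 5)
Your proposal is correct in outline and follows essentially the same route as the paper: the heart of both arguments is the identity $\Cov\lp \phi_u^{C}, \phi_v^{C'} \rp = \Cov\lp h_u^{N}, h_v^{N} \rp$ for well-separated sub-domains (the paper derives it from the Gibbs--Markov decomposition on $C \cup C'$, you from range-$2$ locality of the bi-Laplacian — the same fact), followed by the observation that this cross-covariance is only $O(1/\sqrt{\lambda})$ of $\Var\lp \phi_u^{C} \rp$ and a Slepian comparison to bound the probability that all the $\phi_u^{C} + \phi_v^{C}$ are nonpositive. Your explicit common-part-plus-residual split $\Upsilon + X_B$ and the BRW comparison for the residual maximum are just a more spelled-out implementation of the paper's terse ``Slepian's lemma applies directly,'' and the exponent calibration you flag as delicate is left equally implicit in the paper.
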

	\begin{proof}
		The philosophy parallels that in \cite{ding2013exponential}, which establishes covariance estimates on $\phi_u^{C} + \phi_v^{C}, B \in \mcB$ to use Slepian's comparison theorem. Here, we simply provide a calculation for the covariance between $\phi_u^{C}, \phi_u^{C'}, B \neq B', B, B' \in \mcB$. Now, we write the Gibbs-Markov decomposition for the domain $C \cup C' \subset V_N$ on two points $u \in B \subset C, v \in B' \subset C'$ as 
		\bequn
			\begin{aligned}
			h_{u}^{N} & = h_{u}^{C \cup C'} + \phi_u^{C \cup C'};\qquad
			h_{v}^{N} & = h_{v}^{C \cup C'} + \phi_v^{C \cup C'}.
			\end{aligned}
		\eequn
		By the fact that $\dist\lp C, C' \rp \gg 1$, we conclude that above decomposition coincides with Gibbs-Markov decomposition on $C, C'$ individually, i.e.
		\bequn
			\begin{aligned}
			h_{u}^{N} & = h_{u}^{C} + \phi_u^{C};\qquad
			h_{v}^{N} & = h_{v}^{ C'} + \phi_v^{ C'}.
			\end{aligned}
		\eequn
		More specifically, there is a coupling of $\lp h_{u}^{C}, \phi_u^{C}, h_{v}^{ C'}, \phi_v^{ C'} \rp \overset{\text{law}}{\sim} \lp h_{u}^{C \cup C'}, \phi_u^{C \cup C'},\rpt$  $\left. h_{v}^{C \cup C'}, \phi_v^{C \cup C'} \rp$. Hence their covariance structure coincides. Since $C, C'$ are disconnected, we have that $h_{u}^{C \cup C'} \perp h_{v}^{C \cup C'}, u \in C, v \in C'$. And $h_{u}^{C \cup C'} \perp \phi_v^{C \cup C'}, h_{v}^{C \cup C'} \perp \phi_u^{C \cup C'}$ follows from the property of Gibbs-Markov decomposition. We can derive as follows
		\bequn
			\begin{aligned}
				\Cov\lp h_{u}^{N}, 		
			h_{v}^{N}\rp = & \ \Cov\lp h_{u}^{C \cup C'} + \phi_u^{C \cup C'},  h_{v}^{C \cup C'} + \phi_v^{C \cup C'} \rp		\\
			= & \ \Cov\lp h_{u}^{C \cup C'}, h_{v}^{C \cup C'} \rp + \Cov\lp h_{u}^{C \cup C'}, \phi_v^{C \cup C'} \rp \\
			& \ + \Cov\lp \phi_u^{C \cup C'}, h_{v}^{C \cup C'}\rp + \Cov\lp \phi_u^{C \cup C'}, \phi_v^{C \cup C'} \rp		\\
			= & \ \Cov\lp \phi_u^{C \cup C'}, \phi_v^{C \cup C'} \rp 		
			=  \ \Cov\lp \phi_u^{C}, \phi_v^{C'} \rp.
			\end{aligned}
		\eequn
		Hence, by standard result on the covariance structure of 4D membrane models, we have that
		\bequn
			\Cov\lp \phi_u^{C}, \phi_v^{C'} \rp = \Cov\lp h_{u}^{N}, 		
			h_{v}^{N}\rp = \log \frac{N}{l} = \frac{\lambda}{100}\log \log r = O\lp \frac{1}{\sqrt{\lambda}} \rp \Var\lp \phi_u^{C}\rp.
		\eequn
		And the Slepian's lemma applies directly to this situation when $\lambda$ is great.
	\end{proof}
 	\begin{corollary}\label{cor:left-tail}
 	There exist absolute constants $C, c > 0$ such that for all $N \in \mbZ_+$ and $\lambda, r \geq C$
	\bequn	
		\mbP\lp h_{N, r_k}^{\diamond} \geq 2m_N- 2\lambda \log \log r \rp \geq 1 - Ce^{-ce^{c\lambda \log\log r}}.
	\eequn
 	\end{corollary}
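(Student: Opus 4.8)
\emph{Strategy.} The quickest observation is that the statement is strictly weaker than \Cref{cor:several-maxima-tail}: the event there produces a set $A\subset\Xi_{N,r}$ with $|A|>\log r\ge 1$ and $h_u^{N}+h_v^{N}>2m_N-2\lambda\log\log r$ for every $(u,v)\in A$, so any $(u,v)\in A$ already witnesses $h^{\diamond}_{N,r}\ge h_u^{N}+h_v^{N}>2m_N-2\lambda\log\log r$; since $e^{-e^{c\lambda\log\log r}}\le e^{-ce^{c\lambda\log\log r}}$ for $c\le1$, the probability bound transfers verbatim. So the only real content is the doubly-exponential lower tail for $h^{\diamond}_{N,r}$, and I now sketch how I would establish it directly, using only the preceding proposition (whose bound is merely $1-Ce^{-\sqrt{\lambda}\log\log r}$) and the sprinkling lemma \Cref{lem:sprinkling}.

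\emph{Main step: amplification by independent sub-boxes.} The plan is to exploit that the preceding proposition is a statement about the membrane field on an \emph{arbitrary} $4$-cube, and to apply it inside many disjoint sub-boxes which, by the Gibbs--Markov property \eqref{G-M}, carry independent membrane fields. Fix $\rho:=N(\log r)^{-a\lambda}$ for a small absolute constant $a\in(0,\pi/8)$; assuming $N$ is large enough that $\rho\ge r^{2}$, tile $V_N$ by $K=\Theta\big((N/\rho)^{4}\big)=\Theta\big((\log r)^{4a\lambda}\big)=\Theta\big(e^{4a\lambda\log\log r}\big)$ pairwise disjoint, pairwise $2$-separated boxes $V_1,\dots,V_K$ of side $\asymp\rho$, so that by \eqref{G-M} the remainders $h^{V_i}$ are i.i.d.\ membrane fields on a box of side $\asymp\rho$. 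Applying the preceding proposition inside $V_i$ with an absolute-constant parameter $\lambda_0$, with probability $\ge 3/4$ there is a pair $(u,v)\in\Xi_{\rho,r}$ inside $V_i$ (hence in $\Xi_{N,r}$) with $h^{V_i}_u+h^{V_i}_v\ge 2m_\rho-2\lambda_0\log\log r$. I then transfer to $h^{N}=h^{V_i}+\mbE[h^{N}\mid\mcF_{\p_2 V_i}]$ exactly as in the sprinkling/Slepian step of that proposition: the gap $2(m_N-m_\rho)=\tfrac{16}{\pi}\log(N/\rho)+O(1)=\tfrac{16a}{\pi}\lambda\log\log r+O(1)$ is strictly below $2\lambda\log\log r$ since $a<\pi/8$, while the remainder, of variance $O(\log(N/\rho))=O(\lambda\log\log r)$, fluctuates by only $O(\sqrt{\lambda\log\log r})$ at the selected pair; both are comfortably absorbed into the budget once $\lambda\ge C$. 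Hence each $V_i$ independently contains a pair $(u,v)\in\Xi_{N,r}$ with $h^{N}_u+h^{N}_v\ge 2m_N-2\lambda\log\log r$ with probability $\ge c_0>0$, and by independence of the $K$ sub-boxes
\[
\mbP\big(h^{\diamond}_{N,r}<2m_N-2\lambda\log\log r\big)\le(1-c_0)^{K}\le e^{-c\,e^{c\lambda\log\log r}}.
\]

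\emph{Main obstacle.} The delicate point is the joint sizing of $\rho$: small enough that $(N/\rho)^{4}$ is doubly-exponential in $\lambda\log\log r$, large enough that $\rho\ge r^{2}$ and that the internal Gibbs--Markov construction of the preceding proposition (its inner sub-boxes, surviving pairs and Slepian estimate) still fits inside each $V_i$, yet not so small that $\log(N/\rho)$ swallows the budget $2\lambda\log\log r$. The transfer step---controlling the coarse remainder $\mbE[h^{N}\mid\mcF_{\p_2 V_i}]$ at the selected pair---is precisely what \Cref{lem:sprinkling} and the Slepian-based key lemma handle, and it is the reason one only extracts a positive constant per sub-box rather than high probability (which suffices, as there are doubly-exponentially many). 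The remaining regime, $N$ only slightly above $r^{2}$ (where only boundedly many disjoint sub-boxes fit), must be treated separately, e.g.\ by iterating \Cref{lem:sprinkling} from the base input \Cref{cor:bound-pair-4Dm} together with the tightness of $\{(h^{\diamond}_{N,r}-\mbE h^{\diamond}_{N,r})/\log\log r\}$.
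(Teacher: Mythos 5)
Your first reduction is exactly what the paper relies on: \Cref{cor:left-tail} is stated without a separate proof because it is an immediate consequence of \Cref{cor:several-maxima-tail} (any single pair of the set $A$ already witnesses $h_{N,r}^{\diamond}\ge 2m_N-2\lambda\log\log r$, and $e^{-e^{x}}\le e^{-ce^{x}}$ for $c\le 1$), and this is the only route the paper offers, since the proposition proved immediately before the corollary gives only the weaker tail $1-Ce^{-\sqrt{\lambda}\log\log r}$. As a derivation of the corollary within the paper's logical structure, your two-line argument is therefore complete and coincides with the intended one.

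The direct amplification you sketch afterwards, however, contains a genuine gap. The per-box success events are not independent: by the Gibbs--Markov property only the fine fields $h^{V_i}$ are i.i.d., while the coarse parts $\mbE\lb h^{N}\,\big|\,\mcF_{\p_2 V_i}\rb$ are all functions of a single global field and are strongly positively correlated across boxes, so the bound $(1-c_0)^{K}$ does not follow as written. Conditioning on the coarse data restores independence of the fine contributions only, and one is then left with the event that the coarse field is simultaneously below $-c\lambda\log\log r$ on a positive fraction of the $K$ boxes. With your choice $\rho=N(\log r)^{-a\lambda}$ the coarse field has variance of order $a\lambda\log\log r$ and is log-correlated across boxes, so a common dip of its top scales by order $\lambda\log\log r$, which makes a positive fraction of the boxes bad at once, has probability at least of order $\exp\lp-C(\lambda\log\log r)^{2}\rp$; this is far larger than $e^{-ce^{c\lambda\log\log r}}$ once $\lambda\log\log r$ is large, so a single-scale decomposition of this kind cannot reach the claimed doubly exponential rate. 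The coarse field is precisely the bottleneck, as in \Cref{lem:sprinkling}, where it is handled only by a Chebyshev bound at the final argmax pair and yields $1-\delta$ rather than any doubly exponential estimate. (A secondary, fixable point: the pair produced inside $V_i$ must be forced into a concentric sub-box well inside $V_i$, as in the proof of \Cref{lem:sprinkling}; otherwise the asserted variance $O(\log(N/\rho))$ of the coarse field at the selected pair fails near $\p V_i$.) Obtaining the doubly exponential tail genuinely requires the multi-scale input behind \Cref{cor:several-maxima-tail}, so your reduction to that corollary is not merely the quickest route but the essential one.
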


	Now, we are ready to prove the main theorem.
    
	\begin{proof}[Proof of the \Cref{thm:4Dm-geo}]
		Now, we can prove the main theorem for this subsection. Suppose that on the contrary, there exists a subsequence $\lbb r_k \rbb$ tends to infinity s.t.\ $\forall k$,
		\bequn
			\begin{aligned}
		\limsup_{N \rightarrow \infty} \mbP\lp  \exists (v, u) \in \Xi_{N,r_k},  h_u^{N}, h_v^{N} \geq m_N- c\log\log r_k  \rp \geq \epsilon, 
			\end{aligned}
		\eequn
		where we choose $c = \frac{c_1}{8}$ and $c_1$ is the constant appearing in  \Cref{cor:bound-pair-4Dm}. Consequently, by \Cref{lem:sprinkling}, for small $\delta > 0$ to be specified and $C\lp \epsilon, \delta \rp > 0$, we have  
		\bequn
			\begin{aligned}
		\limsup_{N \rightarrow \infty} \mbP& \lp  \exists v, u \in V_{C\lp \epsilon, \delta \rp N}: r_k \leq \lv v - u \rv \leq \frac{C\lp \epsilon, \delta \rp N}{r_k}, \rpt   \\
		& \qquad  \qquad \qquad \qquad \left. h_u^{C\lp \epsilon, \delta \rp N}, h_v^{C\lp \epsilon, \delta \rp N} \geq m_N- c\log\log r_k - C\lp \epsilon, \delta \rp \rp \geq 1 - \delta. 
			\end{aligned}
		\eequn
		Now, we consider random variables $W_{N, k} = 2m_N- 2c\log\log r_k - C\lp \epsilon, \delta \rp - h_{C\lp \epsilon, \delta \rp N, r_k}^{\diamond}$. By the preceding inequality, for and $\delta > 0$, there exists an integer $N_{\delta}$ such that $\mbP\lp W_{N, k} \geq 0 \rp \leq 2\delta$ for all $N \geq N_{\delta}, k \in \mbZ_+$.
		
		 Therefore, for $N \geq N_{\delta}, r_k \geq \max\lp e^e, C^* \rp$, we obtain that
		\bequn
			\begin{aligned}
				\mbE W_{N, k} \leq & \ A_{c, c^*, C^*}\delta \log\log r_k.
			\end{aligned}
		\eequn
		One obtains that 
		\bequn
			\mbE h_{C\lp \epsilon, \delta \rp N, r_k}^{\diamond} \geq 2m_{N} - \frac{c_1}{2} \log \log r_k - C\lp \epsilon, \delta \rp,
		\eequn
		which contradicts \Cref{cor:bound-pair-4Dm}.
	\end{proof}

\subsection{Tightness of the sequence of random measure}\label{sec:tight}

We need an enhanced version of such tightness in \cite{biskup2016extreme} as below, e.g.\ summable w.r.t.\ $\lambda$, 
relying on the results in \cite{ding2014extreme}. For $\lambda > 0$, let $A_{N, \lambda}= \{ v \in V_N: h_v^{N} \geq m_N- \lambda \}$
be the set above the $\lambda$-level set.
		\begin{proposition}\label{prop:tight}
			There exists $\beta > 0$ such that for all $\lambda > 1$ and all large-enough $\kappa > 0$, 
			\bequn
				\sup_N \mbP\lp e^{\kappa \lambda} \leq \lv A_{N, \lambda}\rv \rp \leq e^{-\beta\kappa \lambda}.
			\eequn
			In particular, every weak subsequential limit $\eta$ of the processes $\lbb \eta_{N_k, r_k}\rbb$ if exists, must satisfy
			\bequn
				\mbP\lp e^{\kappa \lambda} \leq  \eta\lp \lb 0, 1 \rb^4 \times \lb -\lambda, \infty \rp \rp \rp \leq e^{-\beta\kappa \lambda}.
			\eequn
			for all $\lambda > 1$ and all large-enough $\kappa > 0$.
		\end{proposition}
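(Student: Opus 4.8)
The plan is to deduce the statement about the limit $\eta$ from the finite-$N$ assertion, and to put the real work into the latter. For the passage to the limit, observe that every atom that $\eta_{N,r}$ places above height $-\lambda$ sits at a distinct vertex $x \in V_N$ with $h_x^N \ge m_N - \lambda$, so $\eta_{N,r}\lp \lb 0,1 \rb^4 \times \lb -\lambda,\infty \rp \rp \le \lv A_{N,\lambda} \rv$ deterministically; in particular, on $\{\lv A_{N,\lambda}\rv < e^{\kappa\lambda}\}$ the process $\eta_{N,r}$ carries fewer than $e^{\kappa\lambda}$ atoms above $-\lambda$ in total, which rules out loss of mass to $+\infty$. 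Testing against compactly supported continuous $0 \le f \le \mathbf 1$ that equal $1$ on $\lb 0,1\rb^4 \times \lb -\lambda, M\rb$, letting $M \to \infty$, and using the assumed weak convergence along $\lbb (N_k,r_k)\rbb$ together with the finite-$N$ bound (at the cost of harmless adjustments of $\kappa$ and $\lambda$), one transfers the estimate to $\eta$. This step is routine, so I would only sketch it.

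For the finite-$N$ bound I would start from Markov's inequality, $\mbP\lp e^{\kappa\lambda} \le \lv A_{N,\lambda}\rv \rp \le e^{-\kappa\lambda}\,\mbE\lv A_{N,\lambda}\rv$, and face the fact that the naive first moment is not uniform in $N$: since $\Var(h_v^N) = \tfrac{8}{\pi^2}\log N + O(1)$ and $m_N/\Var(h_v^N) \to \pi$, a Gaussian tail estimate gives $\mbP\lp h_v^N \ge m_N - \lambda\rp \asymp N^{-4}\,(\log N)\,e^{\pi\lambda}$ for $v$ deep inside $V_N$, hence $\mbE\lv A_{N,\lambda}\rv \asymp (\log N)\,e^{\pi\lambda}$. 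To remove the stray $\log N$ I would pass to the truncated set $\wtd A_{N,\lambda}$ of vertices $v$ with $h_v^N \ge m_N - \lambda$ whose coarse-grained values along dyadic scales --- read off either from the Gibbs--Markov decomposition on the nested boxes of side $2^{n-k}$ centered at $v$, or, after the covariance comparison of \Cref{Cov-comp}, from the MBRW partial sums up to scale $k$ --- stay below the linear barrier $\tfrac{k}{n}m_N + \lambda + O(1)$ for every $k$. The ballot-type estimate for (M)BRW recorded in \Cref{sec:mBRW} and proved in \cite{ding2014extreme} shows that this barrier constraint costs a factor of order $\lambda/\log N$ per vertex, so that $\mbE\lv \wtd A_{N,\lambda}\rv \le C\,(1+\lambda)\,e^{\pi\lambda}$ for an absolute constant $C$, uniformly in $N$; tracking this polynomial-in-$\lambda$ dependence is precisely the enhancement over \cite{biskup2016extreme} that produces summability in $\lambda$.

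It then remains to split $\{\lv A_{N,\lambda}\rv \ge e^{\kappa\lambda}\} \subseteq \{\lv \wtd A_{N,\lambda}\rv \ge \tfrac12 e^{\kappa\lambda}\} \cup \{\lv A_{N,\lambda}\setminus \wtd A_{N,\lambda}\rv \ge \tfrac12 e^{\kappa\lambda}\}$. The first event has probability at most $2e^{-\kappa\lambda}\,\mbE\lv \wtd A_{N,\lambda}\rv \le 2C(1+\lambda)\,e^{(\pi-\kappa)\lambda}$. For the second I would run a companion first-moment bound, controlling scale by scale and overshoot by overshoot the expected number of high vertices whose coarse-grained trajectory exceeds the barrier; this is again standard (M)BRW bookkeeping and yields $\mbE\lv A_{N,\lambda}\setminus \wtd A_{N,\lambda}\rv \le C'e^{\alpha\lambda}$ for absolute constants $C',\alpha$, uniformly in $N$, so this event has probability at most $2C'e^{(\alpha-\kappa)\lambda}$. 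Since $\lambda > 1$, both bounds are $\le e^{-\beta\kappa\lambda}$ with, say, $\beta = \tfrac14$, once $\kappa$ exceeds an absolute threshold (chosen so that $\tfrac34\kappa\lambda$ dominates $\pi\lambda + \alpha\lambda + \log(1+\lambda) + \log(C+C')$). Every estimate above is $N$-uniform, so taking $\sup_N$ is legitimate, and this is exactly the asserted inequality.

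I expect the main obstacle to be the $N$-uniform truncated first moment: constructing the right barrier event for the 4D membrane model and importing --- through \Cref{Cov-comp} and the (M)BRW estimates of \Cref{sec:mBRW} --- a ballot estimate sharp enough to convert the stray $\log N$ into a fixed power of $\lambda$, together with the matching $N$-uniform bound on the barrier-violating high vertices. Everything downstream, namely Markov's inequality, the dyadic splitting, and the weak-convergence limit, is soft.
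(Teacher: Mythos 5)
Your reduction of the statement about $\eta$ to the finite-$N$ bound is fine and is essentially how the paper treats it. The gap is in the finite-$N$ part: your two first-moment claims are mutually inconsistent. Since expectations add, $\mbE\lv A_{N,\lambda}\rv = \mbE\lv \wtd A_{N,\lambda}\rv + \mbE\lv A_{N,\lambda}\setminus \wtd A_{N,\lambda}\rv$, and you yourself compute $\mbE\lv A_{N,\lambda}\rv \asymp (\log N)\, e^{\pi\lambda}$. So if the barrier truncation brings the first term down to $C(1+\lambda)e^{\pi\lambda}$ uniformly in $N$, the second term must itself be of order $(\log N)e^{\pi\lambda}$: the barrier-violating high vertices are exactly where the divergent part of the first moment lives (an intermediate-scale overshoot makes the box below it carry exponentially many high leaves, and there are of order $\log N$ scales). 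Hence the claimed $N$-uniform bound $\mbE\lv A_{N,\lambda}\setminus\wtd A_{N,\lambda}\rv \le C' e^{\alpha\lambda}$ is false for any choice of barrier, and Markov on that piece cannot close the argument. Replacing it by a probability bound such as $\mbP\lp A_{N,\lambda}\setminus\wtd A_{N,\lambda} \neq \varnothing \rp$ does not help either: that quantity has no dependence on $\kappa$, whereas the proposition demands decay $e^{-\beta\kappa\lambda}$ for all large $\kappa$; making the barrier height depend on $\kappa\lambda$ and controlling overshoots scale by scale, with estimates summable over the $\log N$ scales, would be a genuinely heavier argument than the "standard bookkeeping" you invoke.

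The paper avoids first moments of the level set altogether. The key observation is that $\lv A_{N,\lambda}\rv \ge \ell$ forces the top-$\ell$ sum to satisfy $S_{\ell,N} \ge \ell(m_N-\lambda)$, and for $S_{\ell,N}$ an $N$-uniform expectation bound is available from the MBRW comparison, namely $\mbE S_{\ell,N} \le \ell(m_N - c\log\ell)$ (\Cref{cor:several-maxima-exp}). This yields the anti-concentration estimate of \Cref{lem:tight1}, $\mbP\lp S_{\ell,N} < \ell(m_N-\lambda)\rp > c_1 - c_2 \lambda/\log\ell$, and \Cref{lem:tight2} then argues by contradiction: if $\mbP\lp \lv A_{N,\lambda}\rv > \ell \rp \ge \epsilon$, a Gibbs--Markov embedding into a larger box together with the sprinkling mechanism of \Cref{lem:sprinkling} boosts the event that the top-$\ell$ sum is large to probability at least $1-\delta$ at a larger scale $N'$ and a shifted level $\lambda'$, contradicting \Cref{lem:tight1} once one tunes $\ell = e^{\kappa\lambda}$ and $\epsilon = e^{-\beta\kappa\lambda}$. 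In short, the paper converts the question about the size of the level set into one about the sum of the $\ell$ largest values, precisely because that is the quantity admitting a uniform-in-$N$ first-moment bound; your proposal tries to force uniformity at the level of $\lv A_{N,\lambda}\rv$ itself, which is impossible.
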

		\begin{remark}
			Indeed, we can also prove a lower bound for the size of the level set as follows: there exist absolute constants $c, C$ s.t.
		\bequ
			\begin{aligned}
				\lim_{\lambda \rightarrow \infty}\lim_{N \rightarrow \infty} \mbP\lp ce^{c\lambda} \leq \lv A_{N, \lambda}\rv \leq Ce^{C\lambda}  \rp = 1.
			\end{aligned}
		\eequ
		This was originally proved as Theorem 1.2 in \cite{ding2014extreme} for 2D GFF which shared a lot of similarities with the proof of the 
		\Cref{thm:4Dm-geo}, and we leave the proof of 4D membrane case to interested readers. \Cref{prop:tight} is an enhanced version
		of the upper tail in this theorem, and we observe that this enhanced upper tail is enough to prove all the results.
		\end{remark}
	
    The proof of this proposition relies on the following two lemmas, which serve as analogues of Lemmas 4.2 and 4.3 in \cite{biskup2016extreme}.
The first lemma provides an estimate on the sum of several of the largest values in the four-dimensional membrane model, as defined in \eqref{def:several-maxima}. While our formulation is adapted to the membrane model, the core argument follows the same spirit as that in \cite{biskup2016extreme}, hence we omit the proof for brevity.

		\begin{lemma}\label{lem:tight1}
			There are constants $c_1, c_2 \in (0, \infty)$ such that for all $\lambda > 0$, $\ell \geq 1$ and $N \geq 1$,
			\bequn
				\mbP(S_{\ell, N} < \ell(m_N- \lambda)) > c_1 - c_2 \frac{\lambda}{\log \ell}.
			\eequn
		\end{lemma}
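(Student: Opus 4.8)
I would split according to the size of $\lambda$. If $\lambda\le\lambda_0$ for a fixed $\lambda_0$, then since every summand of $S_{\ell,N}$ is $\le M_N$ we have $S_{\ell,N}\le\ell M_N$, so
\bequn
\mbP\lp S_{\ell,N}<\ell(m_N-\lambda)\rp \ \ge\ \mbP\lp M_N<m_N-\lambda\rp \ \ge\ \inf_{N}\mbP\lp M_N<m_N-\lambda_0\rp\ =:\ c_\star>0,
\eequn
the positivity of $c_\star$ following from \Cref{thm:convergence-in-law} (whose limit law charges $(-\infty,-\lambda_0)$) together with non-degeneracy of the fields for small $N$; choosing $c_1\le c_\star$ settles this range. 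For $\lambda>\lambda_0$ I would choose $c_2$ large enough that $c_1-c_2\lambda/\log\ell<0$ as soon as $\lambda\ge\tfrac{c_1}{c_2}\log\ell$, and $\tfrac{c_1}{c_2}<c$, where $c$ is the constant of \Cref{cor:several-maxima-exp}; then only $\lambda_0<\lambda<\tfrac{c_1}{c_2}\log\ell$ is left, which forces $\log\ell$ to be large and keeps $c\log\ell-\lambda\ge(c-\tfrac{c_1}{c_2})\log\ell>0$.

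On that last range I would use Chebyshev on the right tail. By \Cref{cor:several-maxima-exp}, $\mbE S_{\ell,N}\le\ell(m_N-c\log\ell)$, hence
\bequn
\mbP\lp S_{\ell,N}\ge\ell(m_N-\lambda)\rp \ \le\ \mbP\lp S_{\ell,N}-\mbE S_{\ell,N}\ge\ell(c\log\ell-\lambda)\rp \ \le\ \frac{\mbE\lb\lp S_{\ell,N}-\mbE S_{\ell,N}\rp_+^2\rb}{\ell^2\lp c\log\ell-\lambda\rp^2},
\eequn
and the denominator is $\ge(c-\tfrac{c_1}{c_2})^2(\ell\log\ell)^2$. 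So the lemma reduces to the uniform-in-$N$ bound
\bequn
\mbE\lb\lp S_{\ell,N}-\mbE S_{\ell,N}\rp_+^2\rb\ =\ O\lp\ell^2\rp
\eequn
(even $o(\ell^2\log^2\ell)$ suffices): granting it, $\mbP(S_{\ell,N}\ge\ell(m_N-\lambda))\le K/\log^2\ell<1-c_1$ once $\log\ell$ exceeds an absolute constant, and a single choice of $c_1,c_2,\lambda_0$ makes the two ranges overlap.

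The crux is this second-moment bound, and I expect it to be the hard part. The Borell--TIS inequality is useless here, because the $\ell$-tuple realizing $S_{\ell,N}$ may in principle concentrate at a single site, so the relevant variance proxy $\max_{v_1,\dots,v_\ell}\Var\lp\sum_i h_{v_i}^N\rp$ is of order $\ell^2\log N$. Instead I would transfer the estimate to the $4$D BRW along the same chain of comparisons that proves \Cref{cor:several-maxima-exp} --- variance matching, the $2N(1,1,1,1)$-shift trick, and a Slepian/Sudakov--Fernique step --- noting that this chain in fact produces a stochastic domination of the relevant maxima up to a constant factor, hence also controls $\mbE[(\,\cdot-a)_+^2]$ at every level $a$. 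Together with the matching lower comparison (as in \Cref{pair-cmp}), which shows the centerings of $S_{\ell,N}$ and of $\sum_{i\le\ell}\theta^{N2^\kappa}_{(i)}$ differ by $O(\ell)$, this reduces the bound to the analogous second moment for the sum of the top $\ell$ values of the BRW. For BRW the tree structure makes it tractable: the extremal particles share a common coarse displacement, essentially the logarithm of the derivative-martingale limit, which has finite variance and contributes $O(\ell^2)$, while conditionally on it the $\ell$ largest recentered values behave like the top $\ell$ atoms of a fixed decorated Poisson process, whose fluctuations contribute only $O(\ell)$. This is the BRW input underlying the corresponding arguments of \cite{ding2014extreme} and \cite{biskup2016extreme}; making it uniform in both $\ell$ and $N$ is where the care goes.
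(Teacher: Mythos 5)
Your overall bookkeeping (trivial regimes for small $\lambda$ and for $\lambda\gtrsim\log\ell$, then a one-sided Chebyshev bound on the middle range) is sound, but the load-bearing step is precisely the one you leave unproved: the uniform-in-$N$ bound $\mbE\lb (S_{\ell,N}-\mbE S_{\ell,N})_+^2\rb=O(\ell^2)$ (or anything of order $o(\ell^2\log^2\ell)$ with a constant beating $(1-c_1)(c-c_1/c_2)^2$). None of the tools actually available here deliver it. Sudakov--Fernique and the Slepian-type Lemma 2.7 of \cite{ding2014extreme}, as used for \Cref{pair-cmp} and \Cref{cor:several-maxima-exp}, compare \emph{expectations} of maxima; they do not produce the ``stochastic domination at every level $a$'' that you invoke, so the transfer of a centered second moment to the BRW side is not a consequence of the chain of comparisons as it stands. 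The crude alternative also fails: $(S_{\ell,N}-\ell m_N)_+\le \ell\,(M_N-m_N)_+$ does give an $O(\ell^2)$ second moment from the exponential upper tail of the centered maximum, but $\mbE S_{\ell,N}$ sits $\Theta(\ell\log\ell)$ \emph{below} $\ell m_N$, so recentering at the mean reintroduces a term of order $\ell^2\log^2\ell$ whose constant has no reason to be small compared with $c^2$ from \Cref{cor:several-maxima-exp}. Finally, the decorated-Poisson heuristic for the top-$\ell$ values, made quantitative and uniform in both $\ell$ and $N$, is a substantial project in its own right (and for the membrane model itself it would be circular, since the Poissonian structure is what the paper is proving); it is not contained in \cite{ding2014extreme} or \cite{biskup2016extreme} in the form you need.

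The argument the paper relies on (Lemma 4.2 of \cite{biskup2016extreme}) needs no variance estimate at all, and this is exactly the ingredient your proposal never touches: the doubly exponential tail of \Cref{cor:several-maxima-tail} (equivalently \Cref{cor:left-tail}). Set $\overline{S}_{\ell,N}:=m_N-S_{\ell,N}/\ell$, so the claim is a lower bound on $\mbP(\overline{S}_{\ell,N}>\lambda)$. By \Cref{cor:several-maxima-exp}, $\mbE\,\overline{S}_{\ell,N}\ge c\log\ell$, while \Cref{cor:several-maxima-tail} gives $\mbP(\overline{S}_{\ell,N}>s)\le Ce^{-ce^{cs}}$ for $s\ge C'\log\ell$. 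Writing $\mbE\,\overline{S}_{\ell,N}^{\,+}=\int_0^\infty\mbP(\overline{S}_{\ell,N}>s)\,ds$ and splitting the integral at $\lambda$ and at $C'\log\ell$ yields $\mbE\,\overline{S}_{\ell,N}^{\,+}\le \lambda+C'\log\ell\;\mbP(\overline{S}_{\ell,N}>\lambda)+Ce^{-\ell^{c}}$, and solving for $\mbP(\overline{S}_{\ell,N}>\lambda)$ gives exactly $c_1-c_2\lambda/\log\ell$. So the fix is not a second-moment computation but this first-moment (reverse-Markov) argument; your small-$\lambda$ step via $\inf_N\mbP(M_N<m_N-\lambda_0)>0$ from \Cref{thm:convergence-in-law} is fine but then unnecessary. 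As written, your proposal has a genuine gap at its crux.
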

The second lemma controls the size of the level set $A_{N, \lambda}$ once we have \Cref{lem:tight1}.
		\begin{lemma}\label{lem:tight2}
			There are constants $c_3, c_4 \in (0, \infty)$ such that the following is true for all $\epsilon, \delta > 0$ that are sufficiently small: 
			Set $\sigma := -\frac{\log \delta}{\epsilon}$, given $\lambda > 0$ and $\ell \in \mbN$, and define
			\bequn
				\lambda' := \lambda + c_4(1 + \log \sigma +  \sqrt{\log(\ell/\delta)\log \sigma}).
			\eequn
			If for some $N' \geq 1$, 
			\bequn
				\mbP(S_{\ell, N'} < \ell(m_N- \lambda')) > \delta,
			\eequn
			then 
			\bequn
				\mbP(|A_{N, \lambda}| > \ell ) < \epsilon
			\eequn
			holds for all $N \in \mbN$ with $N \leq c_3 \sigma^{-1/2}N'.$
		\end{lemma}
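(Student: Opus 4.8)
The plan is to argue by contradiction: supposing $\mathbb{P}(|A_{N,\lambda}|>\ell)\geq\epsilon$, we build inside the much larger box $V_{N'}$ roughly $\sigma$ \emph{independent} chances of a large $\ell$-point sum and show that at least one of them succeeds with probability $>1-\delta$, contradicting the hypothesis $\mathbb{P}(S_{\ell,N'}<\ell(m_N-\lambda'))>\delta$. This follows the scheme of Lemma~4.3 in \cite{biskup2016extreme}; the membrane-specific inputs are the covariance estimates of \Cref{prop:4D-cov} and the conditional-variance bound of \Cref{prop:var-decomp}.

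\emph{Construction and independence boosting.} The constraint $N\leq c_3\sigma^{-1/2}N'$ (with $c_3$ small) lets us place inside $V_{N'}$ a family of $m:=\lceil\sigma\rceil$ pairwise-disjoint, mutually $2$-separated translates $B_1,\dots,B_m$ of $V_{8N}$, each with a concentric translate $B_j^\ast$ of $V_N$. Apply the Gibbs--Markov decomposition \eqref{G-M} on $U:=\bigcup_j B_j^\ast$: for $v\in U$ write $h_v^{N'}=\widehat h_v+\Phi_v$, where the restrictions of $\widehat h$ to the $B_j^\ast$ are i.i.d.\ copies of the membrane field on $V_N$, and the centered Gaussian field $\Phi_v:=\mathbb{E}[h^{N'}_v\mid\mathcal{F}_{\partial_2 U}]$ is independent of $\widehat h$. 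Let $G_j$ be the event that $\widehat h|_{B_j^\ast}$ has at least $\ell$ sites with value $\geq m_N-\lambda$, so $\mathbb{P}(G_j)\geq\epsilon$; by independence and $\sigma=-\log\delta/\epsilon$,
\[
\mathbb{P}\Bigl(\textstyle\bigcup_j G_j\Bigr)\geq 1-(1-\epsilon)^{\lceil\sigma\rceil}\geq 1-e^{-\epsilon\sigma}=1-\delta,
\]
and, taking $m$ a large enough multiple of $\sigma$, a concentration bound for the i.i.d.\ indicators $\mathbf{1}_{G_j}$ gives $|\{j:G_j\}|\geq c\epsilon\sigma$ except with probability $<\delta$. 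For any good $j$, picking $\ell$ witnessing sites $v_1^{(j)},\dots,v_\ell^{(j)}\in B_j^\ast$ (these depend only on $\widehat h$) yields $S_{\ell,N'}\geq\ell(m_N-\lambda)+\sum_{k=1}^\ell\Phi_{v_k^{(j)}}$.

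\emph{Controlling the correction field --- the crux.} It remains to produce a good $j$ with $\sum_k\Phi_{v_k^{(j)}}\geq-\ell(\lambda'-\lambda)$ off an event of probability $<\delta$. One cannot merely bound $\mathrm{Var}(\Phi_v)\asymp\gamma\log(N'/N)$, which is a priori too large. Instead write $\Phi_{v_k^{(j)}}=\Phi_{c_j}+(\Phi_{v_k^{(j)}}-\Phi_{c_j})$ with $c_j$ the center of $B_j^\ast$. By \Cref{prop:var-decomp} the within-box increments $\Phi_v-\Phi_{c_j}$ have variance $O(1)$ (the leading logarithms in the covariances of $h^{N'}$ and of $h^{V_N}$ cancel, cf.\ the increment bound in \Cref{prop:4D-cov} and \eqref{far-bound}), so conditionally on the $\widehat h$-measurable data $(j,v_\bullet^{(j)})$ the Gaussian $\sum_k(\Phi_{v_k^{(j)}}-\Phi_{c_j})$ has variance $O(\ell^2)$ and exceeds $-\ell\,O(\sqrt{\log(\ell/\delta)})$ except with probability $O(\delta/m)$, uniformly in $j$. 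The common values $\Phi_{c_1},\dots,\Phi_{c_m}$ form a centered Gaussian vector with each coordinate of variance $\mathrm{Var}(h^{N'}_{c_j})-\mathrm{Var}(h^{V_N}_{c_j})=O(\log\sigma)$ for the scales at hand; since $\Phi$ is independent of the $G_j$'s, a Gaussian lower-tail bound applied along the $\geq c\epsilon\sigma$ good indices shows that at least one good $j$ has $\Phi_{c_j}\geq-c_4(\log\sigma+\sqrt{\log(\ell/\delta)\log\sigma})$ except with probability $<\delta$. Combining, for that $j$ we obtain $S_{\ell,N'}\geq\ell(m_N-\lambda')$ with the advertised $\lambda'=\lambda+c_4(1+\log\sigma+\sqrt{\log(\ell/\delta)\log\sigma})$, on an event of probability $\geq1-\delta$; hence $\mathbb{P}(S_{\ell,N'}<\ell(m_N-\lambda'))<\delta$, a contradiction. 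Tuning $c_3,c_4$ and absorbing constants finishes the proof.

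The genuinely delicate point is this last step: the correction field $\Phi$ has large pointwise variance, but restricted to any single small box it is, up to $O(1)$ fluctuations, essentially one Gaussian number, so the possible loss in the $\ell$-sum is carried by only $m\asymp\sigma$ numbers; having $\asymp\sigma$ of them, together with the precise increment control of \Cref{prop:var-decomp}, lets one always find a good box whose common value is not too negative. Arranging the bookkeeping so that exactly $\lambda'$ emerges --- with no residual $\log(N'/N)$ term, which is where the constraint $N\leq c_3\sigma^{-1/2}N'$ must be exploited --- is the crux, and mirrors the GFF argument of \cite{biskup2016extreme}.
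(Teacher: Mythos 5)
Your overall plan (argue by contradiction, sprinkle $\asymp\sigma$ independent copies via Gibbs--Markov, then pay a Gaussian tail for the binding field) is the same philosophy as the paper's proof, but the crux step --- exactly the one you flag as delicate --- has a genuine gap. First, since you condition on $\partial_2 U$ with $U=\bigcup_j B_j^\ast$ the union of the \emph{side-$N$} boxes, the binding field has pointwise variance $\tfrac{8}{\pi^2}\log(N'/N)+O(1)$ at the centers $c_j$, and this is not $O(\log\sigma)$: the hypothesis $N\le c_3\sigma^{-1/2}N'$ bounds $N'/N$ from \emph{below}, so it cannot be exploited to cap this variance, and the tail estimate that is supposed to produce exactly $\lambda'$ does not follow. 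Second, the step where a Gaussian lower-tail bound is ``applied along the $\ge c\epsilon\sigma$ good indices'' implicitly treats $\Phi_{c_1},\dots,\Phi_{c_m}$ as (nearly) independent; they are values of one positively correlated coarse field (essentially perfectly correlated if the boxes are merely disjoint rather than spread at mutual distances of order $N'$), so having many good boxes does not boost the success probability: when $N'/N$ is huge, a single common fluctuation of standard deviation $\asymp\sqrt{\log(N'/N)}$ drags all centers below $-\lambda'$ simultaneously with probability of constant order, not $<\delta$. Third, the claim that the increments $\Phi_v-\Phi_{c_j}$ have $O(1)$ variance via \Cref{prop:var-decomp} fails for witnessing sites near $\partial B_j^\ast$, because the conditioning is on $\partial_2 B_j^\ast$ itself rather than on a buffered larger box (there $\Var(\Phi_v)\asymp\log N'$ while $\Var(\Phi_{c_j})\asymp\log(N'/N)$); your boxes $B_j$ of side $8N$ are introduced but never used to provide this buffer.

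The paper's proof avoids all three problems with a two-scale architecture. One first embeds $V_N$ concentrically into a single larger cell $V_{N''}$ and uses the sign-symmetry of the centered binding sum evaluated at the $\ell$ maximizers of the inner field: the event survives at cost $\epsilon\to\epsilon/2$ with \emph{no} variance paid, and this is what eliminates any residual $\log(N''/N)$ term. Only then does one sprinkle disjoint copies of such cells inside $V_{N'}$ (in the paper this is delegated to the $\ell$-point analogue of \Cref{lem:sprinkling}): the fine fields in the cells are genuinely independent, giving the $1-(1-\epsilon/2)^{\#\mathrm{cells}}$ boost; the good cell and its $\ell$ witnessing points are fine-field measurable, hence independent of the coarse field; and the only Gaussian tail ever paid is for the coarse field bridging scales $N'$ and the cell scale $\asymp\sigma^{-1/2}N'$, whose variance really is $O(\log\sigma)$, with a union bound over the $\ell$ points yielding the $\sqrt{\log(\ell/\delta)\log\sigma}$ term in $\lambda'$. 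If you reorganize your argument in this two-scale way --- replacing the variance bound at scale $N$ by the $\epsilon/2$ symmetry step, and tying the sprinkling cells' side to $N'$ rather than to $N$ --- the rest of your bookkeeping goes through.
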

		\begin{proof}
			We will prove the contrary. Fix $\epsilon, \delta > 0$ small enough and suppose that for some $\lambda > 0, l \geq 1$ and $N \geq 1$, 
			\bequn
				\mbP(|A_{N, \lambda}| > \ell ) \geq \epsilon.
			\eequn
			Note that this implies 
			\bequn
				\mbP(S_{\ell, N} > \ell(m_N- \lambda)) \geq \epsilon.
			\eequn
			As the first step, consider a box $V_N$ of side $N$ contained in, and centered at the same point near the center of, a box $V_{N''}$ of side $N''$. If $h''$ is a GFF on $V_{N''}$ with zero boundary conditions, then we claim that
			\bequn
				\mbP(S_{\ell, N}(h'') > \ell(m_N- \lambda)) \geq \frac{\epsilon}{2},
			\eequn
			note that $h''$ is defined on $V_{N''}$ but $S_{\ell, N}(h'')$ refers to its restriction on $V_N$. To show this, recall that according to Gibbs-Markov decomposition, one has $h'' \overset{\text{law}}{=} h + \phi''$, where $\phi''$ is a mean-zero Gaussian process which is independent with $h$. Consequently, one concludes that 
			\bequn
				\lbb S_{\ell, N}(h'') > \ell(m_N- \lambda) \rbb \supseteq \lbb S_{\ell, N}(h) > \ell(m_N- \lambda)) \rbb \cap \Big\{ \sum_{x \in U_{l, N}(h)}\phi_x'' \geq 0 \Big\}.
			\eequn
			And the above relation provides us with the desirable inequality. Now, by sprinkling lemma, we conclude that for arbitrary $\delta$ we can choose $N'$, s.t.
			\bequn
				\mbP(S_{\ell, N'} > \ell(m_N- \lambda)) \geq 1 - \delta,
			\eequn
			which contradicts the assumption of this lemma.	
		\end{proof}

\subsection{Distributional invariance}\label{sec:invariance}

In this subsection, we establish the other main ingredient in the proof of the convergence of the extremal 
process. Namely, this distributional invariance under ``Dysonization'' of its point by a simple diffusion 
uniquely characterizes the limiting distribution as a Poisson point process. We will employ the Gaussian interpolation method described in \cite{biskup2016extreme} in the case of 4D membrane models.

Let $h^{1}, h^{2}$ be independent copies of the 4D membrane models in $V_N$. Let $t > 0$ be a fixed number 
which does not change with $N$ and abbreviate
\bequn
	\hat{h}^{1} := \sqrt{1 - \frac{t}{g\log N}} h^{1}, \quad \hat{h}^{2} := \sqrt{\frac{t}{g\log N}} h^{2},
\eequn
where $g = \frac{8}{\pi^2}$. Note that this parameter is chosen so that we have $\Var\lp h_x^{1} \rp \sim g\log N$ with $x$ away from the boundary $\p V_N$. The key to the proof of the fact is that the interpolated field 
\bequ
	h:= \hat{h}^{1} + \hat{h}^{2}.
\eequ
also has the law of the 4D membrane models in $V_N$.

Now, recall the definition \eqref{equ:def-extremal-process} of the extremal process in 4D membrane model. We define the following set
\bequn
	\Theta_{N, r} := \lbb x \in V_N, \quad h_x= \max_{z \in \Lambda_r(x)} h_z\rbb,
\eequn
and we use $\Theta_{N, r}^1$ to denote the corresponding set associated with the field $h^1$. Clearly, for arbitrary function $f: [0, 1]^4 \times \mbR \rightarrow \mbR$,
\bequn
	\la \eta_{N, r}, f \ra = \sum_{x \in \Theta_{N, r}} f(\frac{x}{N}, h_x- m_N).
\eequn
If $f$ is compactly support, we can insist on $x \in A_{N, \lambda}$ for $\lambda$ large enough s.t. $f$ vanishes on $[0, 1]^4 \times (-\infty, -\lambda)$. 
Our first observation is that to consider the maximum, we have to choose a sequence $r_N \rightarrow \infty$, which does not differ a lot from using only one $r$.
\begin{lemma}\label{subst}
	Let $f: [0, 1]^4 \times \mbR \rightarrow \mbR$ be a measurable function with compact support. For any $r_N$ with $r_N \rightarrow \infty, \frac{r_N}{N} \rightarrow 0$, 
	\bequn
		\lim_{r\rightarrow\infty}\limsup_{N \rightarrow \infty} \mbP(\la \eta_{N, r}, f \ra \neq \la \eta_{N, r_N}, f \ra) = 0.
	\eequn
\end{lemma}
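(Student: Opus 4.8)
The plan is to show that, with high probability, the two point configurations $\Theta_{N,r}$ and $\Theta_{N,r_N}$ agree on the set $A_{N,\lambda}$ of points above level $m_N - \lambda$, where $\lambda$ is chosen so large that $f$ vanishes on $[0,1]^4 \times (-\infty,-\lambda)$. Since $r_N \to \infty$, eventually $r_N \geq r$, so $\Lambda_r(x) \subseteq \Lambda_{r_N}(x)$ and hence $\Theta_{N,r_N} \subseteq \Theta_{N,r}$; consequently $\langle \eta_{N,r_N}, f\rangle$ and $\langle \eta_{N,r}, f\rangle$ can differ only if there is a point $x \in A_{N,\lambda}$ that is an $r$-local maximum but not an $r_N$-local maximum. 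For such an $x$ there must exist $z \in \Lambda_{r_N}(x) \setminus \Lambda_r(x)$ with $h_z \geq h_x \geq m_N - \lambda$, giving two points $x,z \in V_N$ with $r < |x-z| \leq r_N$ and both field values at least $m_N - \lambda$.

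First I would reduce to this event: writing
\bequn
	\{\langle \eta_{N,r}, f\rangle \neq \langle \eta_{N,r_N}, f\rangle\} \subseteq \{\exists\, u,v \in V_N:\ r \leq |u-v| \leq r_N,\ h_u^N, h_v^N \geq m_N - \lambda\}
\eequn
for $N$ large (using also that local maxima within distance $r$ of the boundary contribute negligibly, which can be absorbed into $A_{N,\lambda}$ since $f$ has compact $x$-support inside $[0,1]^4$, or handled by a standard boundary estimate). Since $r_N \leq N/r$ for $N$ large, this event is contained in $\{\exists (u,v) \in \Xi_{N,r}:\ h_u^N, h_v^N \geq m_N - \lambda\}$ with $\lambda$ a fixed constant. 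Then I would invoke \Cref{thm:4Dm-geo}: taking $\lambda$ fixed, for $r$ large enough we have $\lambda \leq c \log\log r$, and the theorem gives that the probability of finding such a pair tends to $0$ as first $N \to \infty$ and then $r \to \infty$. This yields exactly
\bequn
	\lim_{r\to\infty} \limsup_{N\to\infty} \mbP(\langle \eta_{N,r}, f\rangle \neq \langle \eta_{N,r_N}, f\rangle) = 0.
\eequn

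The main obstacle is the bookkeeping needed to ensure that all relevant local maxima are actually captured by $A_{N,\lambda}$ and $\Xi_{N,r}$: one must confirm that points contributing to $\langle \eta, f\rangle$ lie above level $m_N - \lambda$ (immediate from compact support of $f$), that the offending pair $(x,z)$ genuinely has $|x-z| > r$ and lies in $\Xi_{N,r}$ once $r_N \le N/r$, and that points too close to $\partial V_N$ — where the covariance estimates of \Cref{prop:4D-cov} and thus the geometry theorem may degrade — do not spoil the argument. The latter is handled by noting $f \in C_c([0,1]^4 \times \mbR)$ is supported in a compact subset of the open cube $(0,1)^4$ in the spatial variable, so only points $x$ with $x/N$ bounded away from $\partial[0,1]^4$ are relevant, and for these $d_N(x) \gtrsim N$. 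Apart from this, the argument is a direct application of the already-established geometry of near maxima, so no further hard input is required.
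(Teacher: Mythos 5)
Your argument is correct and is essentially the proof the paper intends (the paper omits it, citing the same method as Lemma 4.4 of \cite{biskup2016extreme}): since $\Theta_{N,r_N}\subseteq\Theta_{N,r}$ once $r_N\geq r$, a discrepancy forces a pair of points at distance in $(r, r_N]\subseteq[r,N/r]$ both above $m_N-\lambda$, which \Cref{thm:4Dm-geo} excludes in the iterated limit $N\to\infty$, $r\to\infty$. One harmless aside: your boundary discussion is unneeded (and note $f\in C_c([0,1]^4\times\mbR)$ need not vanish near $\partial[0,1]^4$, as $[0,1]^4$ is compact), but this does not matter because \Cref{thm:4Dm-geo} applies to all pairs in $V_N$ without any restriction away from the boundary.
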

This lemma is proved by exactly the same method of lemma 4.4 of \cite{biskup2016extreme} using the local geometry of the level set and we omit the proof here. After focusing our attention on a fixed $r$, we attempt to replace the condition $\{ x \in \Theta_{N, r} \}$ by $\{x \in \Theta_{N, r}^1 \}$, which is defined for the 4D membrane field $h^{1}$. We also define $A_{N, \lambda}^{1}$ as the corresponding set for $h^{1}$.
\begin{proposition}\label{Ginterpolation}
	Let $f: [0, 1]^4 \times \mbR \rightarrow \mbR$ be a measurable function with compact support. For any $\epsilon > 0$, 
	\bequ\label{inner-comp}
		\lim_{r\rightarrow\infty}\limsup_{N \rightarrow \infty} \mbP\Bigg(\Big| \sum_{x \in \Theta_{N, r}} f(\frac{x}{N}, h_x- m_N) - \sum_{x \in \Theta_{N, r}^1} f(\frac{x}{N}, h_x- m_N)  \Big| > \epsilon\Bigg) = 0.
	\eequ
\end{proposition}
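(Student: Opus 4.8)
The plan is to compare the two sums term by term, controlling the contribution of those sites where the "leader" status under $h=\hat h^1+\hat h^2$ differs from that under $h^1$. Since $f$ has compact support, fix $\lambda$ large so that $f$ vanishes on $[0,1]^4\times(-\infty,-\lambda)$; then only sites in $A_{N,\lambda}$ (resp. $A_{N,\lambda}^1$) contribute. The difference in \eqref{inner-comp} is bounded by $\|f\|_\infty$ times the number of sites $x$ that are $r$-local maxima for exactly one of the two fields \emph{and} lie above level $m_N-\lambda$ in the corresponding field, plus a term measuring how much the value $h_x-m_N$ at a common leader $x$ is perturbed relative to $h_x^1-m_N$ (handled by uniform continuity of $f$). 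The first step is therefore to show that the perturbation field $\hat h^2=\sqrt{t/(g\log N)}\,h^2$ is uniformly small on the relevant set: since $\Var(h_x^2)\le g\log N+O(1)$, a union bound over the $O(N^4)$ sites gives $\max_{x\in V_N}|\hat h^2_x|\le C\sqrt{t\log N}\cdot\frac{1}{\sqrt{g\log N}}\cdot\sqrt{\log N}$-type control — more precisely $\max_x|\hat h_x^2| = O(\sqrt t)$ with high probability is \emph{not} quite enough, so instead one shows $\hat h^2$ has increments of size $o(1)$ on scale $r$ (its variance is $O(t)$ and, being $\frac{1}{\sqrt{g\log N}}$ times a log-correlated field, its oscillation over an $r$-ball is $O(\sqrt{t}\,\frac{\log r}{\sqrt{\log N}})\to 0$).

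Next I would combine this with the geometry of near-maxima. By \Cref{thm:4Dm-geo}, with probability tending to $1$ (first $N\to\infty$, then $r\to\infty$) all sites of $A_{N,\lambda}$ that are $r$-local maxima are $(N/r)$-separated, i.e.\ there are only finitely many "clusters", and the same holds for $h^1$. On the complement of a bad event of vanishing probability, a site $x$ that is an $r$-local maximum of $h$ above $m_N-\lambda$ but not of $h^1$ must have, within $\Lambda_r(x)$, a site $z$ with $h_z>h_x$ but $h^1_z\le h^1_x$; the values $h^1_x,h^1_z$ are then within $2\max_w|\hat h^2_w|$'s worth of oscillation of each other, forcing both $x$ and $z$ to be within an $o(1)$-neighbourhood of level $m_N$ along $h^1$ inside the \emph{same} $r$-cluster. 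One then argues that the number of such "ambiguous" leaders is controlled by the number of sites in a single cluster that come within a constant of the cluster maximum, together with the tightness estimate \Cref{prop:tight} / \Cref{cor:several-maxima-tail} which bounds $|A_{N,\lambda}|$; letting $r\to\infty$ after $N\to\infty$ and then the oscillation bound $\to 0$ kills this count. Symmetrically one handles leaders of $h^1$ that fail to be leaders of $h$.

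Concretely the steps, in order, are: (i) reduce to sites above level $m_N-\lambda$ using compact support of $f$; (ii) establish the uniform oscillation bound $\sup_{x}\sup_{z\in\Lambda_r(x)}|\hat h^2_x-\hat h^2_z|\to 0$ in probability using the covariance bounds of \Cref{prop:4D-cov} and a Borel–TIS / union-bound argument; (iii) invoke \Cref{thm:4Dm-geo} for both $h$ and $h^1$ to put ourselves on the event that near-maxima form well-separated clusters; (iv) show that on this event every site contributing differently to the two sums lies within $o(1)$ of the top of its cluster, and bound the number of such sites using the within-cluster structure plus the tightness bound \Cref{prop:tight}; (v) use uniform continuity of $f$ to absorb the value-perturbation $|h_x-h_x^1|=|\hat h^2_x-(1-\sqrt{1-t/(g\log N)})h^1_x|$, which is $o(1)$ uniformly on $A_{N,\lambda}$ by (ii) and the fact that $1-\sqrt{1-t/(g\log N)}=O(1/\log N)$ while $h^1_x\le m_N-\ \text{(something bounded below)}$ is $O(\log N)$. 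The main obstacle is step (iv): making precise that a mismatch in leader status is \emph{localized} to one cluster and then quantitatively bounding the number of mismatched sites — this is exactly where the cluster geometry of \Cref{thm:4Dm-geo} and the summable tail control of \Cref{prop:tight} must be used in tandem, and it is the place where the argument is genuinely more delicate than a soft comparison. I expect this to mirror the structure of the corresponding argument in \cite{biskup2016extreme} but to require the 4D-specific covariance estimates throughout.
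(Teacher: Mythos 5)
Your overall scaffolding (reduce to the level set via compact support, oscillation of $\hat h^2$ over $r$-balls, cluster geometry from \Cref{thm:4Dm-geo}, cardinality control from \Cref{prop:tight}) matches the paper's, but the crux of your argument is different from the paper's and has a genuine gap. You bound the difference of the two sums by $\|f\|_\infty$ times the number of sites that are $r$-local maximizers for exactly one of the two fields, and then claim in step (iv) that this count is ``killed'' in the iterated limit. Note first that for a common leader $x\in\Theta_{N,r}\cap\Theta_{N,r}^1$ the two terms in \eqref{inner-comp} are \emph{identical} (both sums evaluate $f$ at $(x/N,\,h_x-m_N)$ with the same interpolated field $h$), so your decomposition really requires that the symmetric difference $\Theta_{N,r}\,\triangle\,\Theta_{N,r}^1$, intersected with the relevant level set, is \emph{empty} with probability tending to one. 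A mismatch forces, as you observe, two distinct sites $x,z$ in the same cluster with $|h^1_x-h^1_z|$ at most the oscillation scale of $\hat h^2$; but ruling this out requires a quantitative ``no near-ties'' (two-point density) estimate for $h^1$ among near-maximal sites, uniformly in $N$, and none of the tools you invoke (\Cref{thm:4Dm-geo} gives spatial separation of clusters, \Cref{prop:tight} gives tightness of $|A_{N,\lambda}|$, the oscillation bound gives near-constancy of $\hat h^2$) delivers it. Such an estimate can be proved by a two-point Gaussian computation, but it is neither stated nor proved in your proposal, and it is precisely the missing idea. The paper sidesteps this entirely: it does not ask the leader \emph{sites} to coincide, but maps each $h$-leader $x$ to $\Pi^1(x)=\arg\max_{\Lambda_{2r}(x)}h^1$, shows (Lemma \ref{noname}) that $\Pi^1(x)\in\Theta_{N,r}^1$, $|\Pi^1(x)-x|\le r/2$ and $0\le h_x-h_{\Pi^1(x)}\le\delta$, and then matches the two sums term by term using uniform continuity of $f$ in \emph{both} arguments together with the bound $|A^1_{N,\lambda}|\le M$, so that the total error is $\tfrac{\epsilon}{M}|A^1_{N,\lambda}|\le\epsilon$; no tie-breaking statement is ever needed.

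A second, concrete error is in your step (v): the quantity $h_x-h^1_x=\hat h^2_x-(1-\sqrt{1-t/(g\log N)})\,h^1_x$ is \emph{not} $o(1)$ uniformly on $A_{N,\lambda}$ — the deterministic part tends to the constant $\pi t/2$ on the level set and $\hat h^2_x$ is a centered Gaussian of variance $\approx t$, so the difference is of order one (indeed this $W_t-\pi t/2$ shift is exactly what survives in \Cref{thm:dysoninv}). Fortunately this comparison is also irrelevant for the present proposition, since both sums in \eqref{inner-comp} use the same field $h$ in the second argument of $f$; the replacement of $h_x$ by $h^1_x-\pi t/2+\hat h^2_x$ belongs to the subsequent step in the proof of \Cref{thm:dysoninv}, not here. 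So: your approach could be repaired by adding and proving the near-tie estimate, but as written the key step (iv) is unsupported and step (v) asserts a false bound.
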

Our proof technique shows that the values of $f$ on $\Theta_{N, r}$, restricted to a proper level set of $h$, are controllably close to the values of $f$ on $\Theta_{N, r}^1$ restricted to a proper level set of $h^1$. We first state a relation between the level set of $h$ and $h^{1}$.
\begin{lemma}\label{levelset}
	We have
	\bequn
		\begin{aligned}
		\lim_{\lambda\rightarrow\infty}\liminf_{N \rightarrow \infty} \mbP\lp A_{N, \lambda}^{1} \subseteq A_{N, 2\lambda}\rp = 1, 			\\
			\lim_{\lambda\rightarrow\infty}\liminf_{N \rightarrow \infty} \mbP\lp A_{N, \lambda}\subseteq A_{N, 2\lambda}^{1} \rp = 1.
		\end{aligned}
	\eequn
\end{lemma}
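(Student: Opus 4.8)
The plan is to deduce both inclusions from a single mechanism: on the relevant level sets, the difference field $h - h^1 = -(1-\theta)h^1 + \hat h^2$ — where $\theta := \sqrt{1 - t/(g\log N)}$ — is small, thanks to three ingredients: (a) the uniform sub-exponential control $\mbP(|A_{N,\lambda}^i| \ge e^{\kappa\lambda}) \le e^{-\beta\kappa\lambda}$ from \Cref{prop:tight} (applied to $h^1$, which is again a 4D membrane model); (b) the order $M^i_N = m_N + O_{\mbP}(1)$, in particular $\max_{V_N}\hat h^2 = \sqrt{t/(g\log N)}\,M^2_N = o(m_N)$ in probability, from \Cref{thm:convergence-in-law}; and (c) the independence of $h^1$ and $h^2$ together with the elementary Gaussian union bound. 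Note at the outset that $1-\theta \sim \tfrac{t}{2g\log N}$, so $(1-\theta)m_N \to \tfrac{\pi t}{2}$ is a harmless bounded quantity, and $\Var(\hat h^2_v) = \tfrac{t}{g\log N}\Var(h^2_v) \le 2t$ for all $N$ large by \Cref{prop:4D-cov}.

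For the first inclusion, suppose $v \in A^1_{N,\lambda}$, i.e.\ $h^1_v \ge m_N - \lambda$. Multiplying by $\theta > 0$ and using $(1-\theta)(m_N - \lambda) \le (1-\theta)m_N \le \pi t$ for $N$ large, one gets $\theta h^1_v \ge (m_N - \lambda) - \pi t$, hence $h_v \ge (m_N - \lambda) - \pi t + \hat h^2_v$. Thus for $\lambda > 2\pi t$ the event $\{A^1_{N,\lambda}\not\subseteq A_{N,2\lambda}\}$ is contained in $\{\exists v \in A^1_{N,\lambda}:\ \hat h^2_v < -\lambda/2\}$. Since $A^1_{N,\lambda}$ is $\sigma(h^1)$-measurable and $\hat h^2$ is a rescaled independent copy of $h^2$, I would condition on $h^1$, split according to whether $|A^1_{N,\lambda}| < e^{\kappa\lambda}$, and apply a Gaussian union bound to get $\mbP(A^1_{N,\lambda}\not\subseteq A_{N,2\lambda}) \le e^{\kappa\lambda - \lambda^2/(16t)} + e^{-\beta\kappa\lambda}$, uniformly in large $N$; letting $N\to\infty$ and then $\lambda\to\infty$ gives the first claim.

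For the second inclusion, note first that $\{A_{N,\lambda}\not\subseteq A^1_{N,2\lambda}\} \subseteq \{\exists v:\ h_v \ge m_N - \lambda,\ \hat h^2_v > \lambda\}$, because $h^1_v < m_N - 2\lambda$ forces $\hat h^1_v = \theta h^1_v < m_N - 2\lambda$ (for $N$ large), whence $\hat h^2_v = h_v - \hat h^1_v > \lambda$. On this event, if $\hat h^1_v < 0$ then $\hat h^2_v > h_v \ge m_N - \lambda$, so $\max_{V_N}\hat h^2 > m_N - \lambda$, an event of probability $\to 0$ as $N\to\infty$ for each fixed $\lambda$ by ingredient (b). Otherwise $\hat h^1_v \ge 0$, and I decompose dyadically over $\hat h^2_v \in [2^k\lambda, 2^{k+1}\lambda)$, $k \ge 0$: on the $k$-th slice $\hat h^1_v = h_v - \hat h^2_v \ge m_N - \lambda(1+2^{k+1})$, and since $\hat h^1_v \ge 0$ and $\theta \le 1$ this upgrades to $h^1_v = \hat h^1_v/\theta \ge m_N - \mu_k$ with $\mu_k := \lambda(1+2^{k+1})$, i.e.\ $v \in A^1_{N,\mu_k}$. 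Conditioning on $h^1$, splitting on $|A^1_{N,\mu_k}| < e^{\kappa\mu_k}$ (which holds automatically once $e^{\kappa\mu_k} > |V_N|$, and with probability $\ge 1 - e^{-\beta\kappa\mu_k}$ otherwise), and a Gaussian union bound give a bound $e^{\kappa\mu_k - 4^k\lambda^2/(4t)} + e^{-\beta\kappa\mu_k} \le e^{-c\,2^k\lambda}$ on the $k$-th slice for $\lambda$ large; summing over $k \ge 0$ yields $\le C e^{-c\lambda} \to 0$, which together with the $\hat h^1_v<0$ case completes the second claim.

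I expect the main obstacle to be exactly the dependence appearing in the second inclusion: the level set $A_{N,\lambda}$ is built from $h = \hat h^1 + \hat h^2$, so $\hat h^2$ cannot be controlled on it by a naive independent union bound as in the first inclusion. The dyadic decomposition is the device that resolves this, trading the unbounded range of $\hat h^2_v$ for membership in the $\sigma(h^1)$-measurable, sub-exponentially small level sets $A^1_{N,\mu_k}$, at the cost of the inflated thresholds $\mu_k$ — which the $4^k\lambda^2$ Gaussian decay absorbs with room to spare. The remaining points are routine bookkeeping: checking that all probability bounds are uniform in $N$ (which is exactly what \Cref{prop:tight} and \Cref{thm:convergence-in-law} provide), and that the deterministic shift $(1-\theta)m_N \to \pi t/2$ and the $O(1)$ inflation of $\Var(\hat h^2_v)$ are innocuous.
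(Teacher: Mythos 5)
Your argument is correct and follows essentially the same route as the paper: both inclusions are reduced, via the independence of $h^1$ and $\hat h^2$ together with the summable level-set bound of \Cref{prop:tight}, to Gaussian union bounds over level sets of $h^1$. The only cosmetic difference is in the second inclusion, where you slice dyadically in $\hat h^2_v$ (treating the case $\hat h^1_v<0$ separately through the maximum of $\hat h^2$), whereas the paper slices arithmetically in the value of $h^1_x$, which covers all cases in a single sum; the two devices are equivalent.
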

\begin{proof}
	For the first equality, it suffices to consider the set $A_{N, \lambda}^{1} \backslash A_{N, 2\lambda}$. We obviously have the following inclusion relation
	\bequn
		A_{N, \lambda}^{1} \backslash A_{N, 2\lambda}\subseteq \lbb x \in V_N,  h_x^{1} \geq m_N- \lambda,  \hat{h}_x^{2} < c - \lambda\rbb,
	\eequn
	where $c = \frac{8t}{\pi g}$ is chosen to depend only on $t$ instead of $N, \lambda$ such that $$\sqrt{1 - \frac{t}{g\log N}} h^{1} \geq \lp 1 - \frac{t}{g\log N} \rp \lp m_N- \lambda \rp \geq m_N  - \frac{t}{g\log N} m_N- \lambda \geq m_N- \lambda - \frac{8t}{\pi g}.$$ 
 Now, since $h_x^{1}, \hat{h}_x^{2}$ are independent and $\hat{h}_x^{2}$ is centered with variance only depends on $t$, i.e. $O(t) = \Var(\hat{h}_x^{2}) \leq C$. We have the following inequality
	\bequn
		\mbP\Big(\big\{ x \in V_N,  h_x^{1} \geq m_N- \lambda,  \hat{h}_x^{2} < c - \lambda\big\} \neq \varnothing \Big) \leq \mbP(|A_{N, \lambda}^{1}| > e^{C'\lambda}) + e^{C'\lambda}e^{-\frac{(\lambda - c)^2}{2C}},
	\eequn
	which is obtained by decomposing the event into $|A_{N, \lambda}^{1}| > e^{C'\lambda}$ and $|A_{N, \lambda}^{1}| < e^{C'\lambda}$ and bound the second one by the first moment using the independency between $h^1, \hat{h}^2$. Consequently, by combining \Cref{prop:tight} we conclude that the RHS tends to $0$ as $N, \lambda$ tend to infinity as long as we choose $C'$ sufficiently large.		\\
	
	Next, we prove the second inequality which is more intricate. The methodology is that we decompose the corresponding set into a sequence of subsets which can be bounded using the argument in the above paragraph. 
	We abbreviate $a_N = \sqrt{1 - \frac{t}{g\log N}}$. We again decompose the corresponding set into
	\bequn
		A_{N, \lambda}\backslash A_{N, 2\lambda}^{1} \subseteq \bigcup_{n \geq 2}\lbb x \in V_N,  h_x^{1} > m_N- (n + 1)\lambda,  \hat{h}_x^{2} \geq (a_Nn - 1) \lambda\rbb.
	\eequn
	The above decomposition is obtained by choosing the unique $n$ s.t. $m_N- n\lambda \geq h_x^{1} > m_N- (n + 1)\lambda$ and then 
	$h > m_N - \lambda$ gives us $\hat{h}_x^{2} \geq (a_Nn - 1) \lambda$. Let $A_n$ denotes the event in the above decomposition with 
	index $n$ that is non-empty and $B_n$ denotes the event $\{ |A_{N, \lambda}^{1}| > e^{\kappa n \lambda}\}$ with given $\kappa$. 
	By \Cref{prop:tight}, there is $\beta > 0$ such that for all $\kappa, \lambda$ large enough, we have $\mbP(B_n) \leq e^{-\beta\kappa n \lambda}$. 
	Using again that $h_x^{1}, \hat{h}_x^{2}$ are independent and $\hat{h}_x^{2}$ is centered with variance only depends on $t$, i.e. 
	$O(t) = \Var(\hat{h}_x^{2}) \leq C$. We conclude 
	\bequn
		\mbP(A_n) \leq \mbP(A_n \backslash B_{n + 1}) + \mbP(B_{n + 1}) \leq e^{-\beta\kappa(n + 1)\lambda} + e^{\kappa(n + 1)\lambda} e^{-\frac{(a_Nn - 1)^2\lambda^2}{2C}}.
	\eequn
	Putting all the things together we obtain that
	\bequn
		\mbP(A_{N, \lambda}\backslash A_{N, 2\lambda}^{1}  \neq \varnothing) \leq \sum_{n \geq 2} (e^{-\beta\kappa(n + 1)\lambda}+ e^{\kappa(n + 1)\lambda} e^{-\frac{(a_Nn - 1)^2\lambda^2}{2C}}),
	\eequn
	which tends to $0$ as $N, \lambda \rightarrow \infty, a_N \rightarrow 1$. We comment here that it is the infinite sum appearing in the above bound that requires a bound on $\mbP(|A_{N, \lambda}| > e^{C\lambda} )$ summable in $\lambda$.
\end{proof}
Obviously, the extreme local maxima of $h_x$ and $h_x^{1}$ will coincide only if the field $\hat{h}_x^{2}$ does not vary much in the neighborhood of these extreme points. This naturally leads to the study of the oscillation of $\hat{h}_x^{2}$, which is defined as $osc_{A}g := \max_{z \in A}g(z) - \min_{z \in A}g(z)$. We have the following lemma whose proof we omit as it is very similar to Lemma 4.7 \cite{biskup2016extreme}.
\begin{lemma}
	For any $\lambda > 0$, any $\delta > 0$ and any $r \geq 1$,
	\bequn
		\limsup_{N \rightarrow \infty} \mbP(\max_{x \in A_{N, \lambda}^{1}}osc_{\Lambda_{2r}(x)}\hat{h}_x^{2} > \delta) = 0.
	\eequn
\end{lemma}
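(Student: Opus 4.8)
The plan is to use two features of the setup: the field $\hat h^{2}=\sqrt{t/(g\log N)}\,h^{2}$ is independent of $h^{1}$, hence of the random set $A_{N,\lambda}^{1}$; and, by \Cref{prop:tight}, $\lv A_{N,\lambda}^{1}\rv\le e^{\kappa\lambda}$ with probability at least $1-e^{-\beta\kappa\lambda}$, uniformly in $N$. Conditioning on $h^{1}$ then turns the event into a union bound over a bounded, deterministic set of centres, each of which is controlled by the covariance estimates of \Cref{prop:4D-cov}. Because $\lambda$ is fixed, the residual probability $e^{-\beta\kappa\lambda}$ will be sent to $0$ only at the very end, by letting $\kappa\to\infty$; this is legitimate since the probability we bound does not depend on $\kappa$.

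First I would prove a single-centre estimate, uniform in $x$. Standard boundary estimates for the membrane model (as in \cite{kurt2009maximum,schweiger2020maximum}) give $\Var(h^{1}_x)=O(\log(1+\dist(x,\p V_N)))$, so a first-moment bound, using $m_N\to\infty$, shows that with probability $\to 1$ every $x\in A_{N,\lambda}^{1}$ satisfies $\dist(x,\p V_N)>2r$; on this event $\Lambda_{2r}(x)\subseteq V_N$. Then, for all $u,v\in\Lambda_{2r}(x)$ (so $\lv u-v\rv_1\le 4r$), the first bullet of \Cref{prop:4D-cov} yields
\bequn
	\mbE\lb\lp \hat h^{2}_u-\hat h^{2}_v\rp^{2}\rb=\frac{t}{g\log N}\,\mbE\lb\lp h^{2}_u-h^{2}_v\rp^{2}\rb\le\frac{t}{g\log N}\cdot\frac{8}{\pi^{2}}\lp 2\log(1+4r)+4\alpha_0\rp=:\sigma_N^{2},
\eequn
and $\sigma_N\to 0$ as $N\to\infty$ for fixed $r$. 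Bounding $osc_{\Lambda_{2r}(x)}\hat h^{2}$ by a telescoping sum of at most $(4r+1)^{4}$ nearest-neighbour increments (or by the standard bound for the maximum of a Gaussian field), and noting that the increment bound in \Cref{prop:4D-cov} carries no dependence on the position of $x$, we get $\mbE\lb osc_{\Lambda_{2r}(x)}\hat h^{2}\rb\le(4r+1)^{4}\sigma_N$ uniformly in $x$ and $N$; Markov's inequality then gives
\bequn
	\varrho_N:=\sup_{x}\mbP\lp osc_{\Lambda_{2r}(x)}\hat h^{2}>\delta\rp\le\frac{(4r+1)^{4}\sigma_N}{\delta}\xrightarrow[N\to\infty]{}0.
\eequn

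Finally I would assemble the estimates. Fix $\kappa$ large enough that \Cref{prop:tight} applies. Conditioning on $h^{1}$ and using the independence of $\hat h^{2}$, a union bound over the (at most $e^{\kappa\lambda}$) centres of $A_{N,\lambda}^{1}$, together with the bulk localisation above, gives
\bequn
	\mbP\lp\max_{x\in A_{N,\lambda}^{1}}osc_{\Lambda_{2r}(x)}\hat h^{2}>\delta\rp\le o_N(1)+\mbP\lp\lv A_{N,\lambda}^{1}\rv\ge e^{\kappa\lambda}\rp+e^{\kappa\lambda}\varrho_N\le o_N(1)+e^{-\beta\kappa\lambda}+e^{\kappa\lambda}\varrho_N.
\eequn
Letting $N\to\infty$ with $\kappa,\lambda,r,\delta$ fixed kills the $o_N(1)$ and the $e^{\kappa\lambda}\varrho_N$ terms, so $\limsup_{N\to\infty}\mbP(\,\cdot\,)\le e^{-\beta\kappa\lambda}$; since $\kappa$ can be taken arbitrarily large, this $\limsup$ is $0$.

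There is no deep analytic obstacle: the lemma follows from \Cref{prop:tight} and \Cref{prop:4D-cov}. The point requiring care is the order of limits — a naive union bound over all $N^{4}$ sites of $V_N$ would require $N^{4}\varrho_N\to 0$, which fails for large $r$ since $\sigma_N^{2}$ only decays like $\log r/\log N$. It is essential to first use that $A_{N,\lambda}^{1}$ is (with high probability) of bounded cardinality \emph{and} independent of $\hat h^{2}$, and to send $\kappa\to\infty$ only afterwards; establishing the bulk localisation of $A_{N,\lambda}^{1}$ is the one ingredient needing an input slightly beyond what is displayed in \Cref{prop:4D-cov}.
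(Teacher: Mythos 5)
Your argument is correct and is essentially the proof the paper invokes by reference to Lemma 4.7 of \cite{biskup2016extreme}: condition on $h^{1}$, use the tightness bound of \Cref{prop:tight} on $\lv A_{N,\lambda}^{1}\rv$ together with independence of $\hat h^{2}$, union-bound over the at most $e^{\kappa\lambda}$ centres, and exploit that the oscillation of $\hat h^{2}$ over a radius-$2r$ block has variance $O(\log r/\log N)\to 0$, sending $\kappa\to\infty$ last. The near-boundary variance bound $\Var(h_x)\lesssim \log(1+\dist(x,\partial V_N))$ that you flag for the bulk-localisation step is indeed the standard input from \cite{kurt2009maximum,schweiger2020maximum}, so no genuine gap remains.
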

Consider now the a.s.\ well-defined mappings:
\bequn
	\Pi(x) := \arg\max_{\Lambda_{2r}(x)} h, \quad \Pi^1(x) := \arg\max_{\Lambda_{2r}(x)} h^1.
\eequn
Our next claim deals with the closeness of $\Pi^1(\Theta_{N, r})$ to $\Theta_{N, r}^1$, and $\Pi^1(\Theta_{N, r}^1)$ to $\Theta_{N, r}$, provided these are restricted to proper level sets.
\begin{lemma}\label{noname}
	The following holds with probability tending to one in the limits $N \rightarrow \infty, \delta \downarrow 0, r \rightarrow \infty$, and $\lambda \rightarrow \infty$ (in this given order):
	\bequn
		x \in \Theta_{N, r} \cap A_{N, \lambda} \cap A_{N, \lambda}^{1} \Longrightarrow \left\{ 
		\begin{aligned}
			& \Pi^1(x) \in \Theta_{N, r}^1, \quad \lv \Pi^1(x) - x \rv \leq \frac{r}{2},	\\
			& 0 \leq h_x - h_{\Pi^1(x)} \leq \delta,
		\end{aligned}\right.
	\eequn 
	and
	\bequn
		x \in \Theta_{N, r}^1 \cap A_{N, \lambda} \cap A_{N, \lambda}^1 \Longrightarrow \left\{ 
		\begin{aligned}
			& \Pi(x) \in \Theta_{N, r}, \quad \lv \Pi(x) - x \rv \leq \frac{r}{2},	\\
			& 0 \leq h_{\Pi(x)}  - h_x \leq \delta.
		\end{aligned}\right.
	\eequn 
\end{lemma}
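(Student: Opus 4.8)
The plan is to run the whole argument on a single high-probability event and then read off both implications as deterministic consequences, following the blueprint of \cite{biskup2016extreme} with its ingredients replaced by their 4D-membrane counterparts. Fix $\lambda$, then fix $r$ so large (given $\lambda$) that $c\log\log(r/2)\ge\lambda$, where $c$ is the absolute constant of \Cref{thm:4Dm-geo}, and fix $\delta>0$. I would work on the intersection of: (i) the ``no intermediate pair'' event of \Cref{thm:4Dm-geo} for the field $h$ applied with parameter $r/2$ (no $u,v\in V_N$ with $r/2\le\lv u-v\rv\le N/(r/2)$ and both $\ge m_N-c\log\log(r/2)$); (ii) the same event for $h^1$ with parameter $r/2$; (iii) the oscillation event $\{\max_{x\in A_{N,\lambda}^1}osc_{\Lambda_{2r}(x)}\hat h^2\le\delta\}$ of the preceding lemma; and (iv) the full-measure event on which $\Pi$ and $\Pi^1$ are single-valued. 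Since both $h$ and $h^1$ are 4D membrane fields on $V_N$, \Cref{thm:4Dm-geo} forces the complements of (i) and (ii) to $0$ as $N\to\infty$ and then $r\to\infty$; the complement of (iii) is $o_N(1)$ by the oscillation lemma; and (iv) has full measure. So it remains to verify, for all $N$ with $N\ge r^2$, that the two implications hold pointwise on this event.

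For the first implication, take $x\in\Theta_{N,r}\cap A_{N,\lambda}\cap A_{N,\lambda}^1$ and set $y:=\Pi^1(x)=\arg\max_{\Lambda_{2r}(x)}h^1$. The crucial step is to show $\lv x-y\rv<r/2$. Since $y$ maximises $h^1$ over $\Lambda_{2r}(x)\ni x$, one has $h^1_y\ge h^1_x\ge m_N-\lambda\ge m_N-c\log\log(r/2)$, so $x$ and $y$ are both $h^1$-high points; and $y\in\Lambda_{2r}(x)$ gives $\lv x-y\rv\le 2r\le N/(r/2)$ once $N\ge r^2$. If $\lv x-y\rv\ge r/2$ held, the pair $(x,y)$ would fall inside the forbidden window of event (ii) --- a contradiction --- so $\lv x-y\rv<r/2$. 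This forces $\Lambda_r(y)\subseteq\Lambda_{2r}(x)$, so $y$ still maximises $h^1$ over $\Lambda_r(y)$, i.e.\ $y\in\Theta_{N,r}^1$. For the heights I would decompose $h_x-h_y=a_N(h^1_x-h^1_y)+(\hat h^2_x-\hat h^2_y)$ with $a_N=\sqrt{1-t/(g\log N)}>0$: the first summand is $\le 0$ since $h^1_y\ge h^1_x$, and the second is at most $osc_{\Lambda_{2r}(x)}\hat h^2\le\delta$ by (iii), so $h_x-h_y\le\delta$; on the other hand $\lv x-y\rv<r$ gives $y\in\Lambda_r(x)$, and $x\in\Theta_{N,r}$ then gives $h_x\ge h_y$.

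The second implication is the exact mirror image, exchanging the roles of $h$ and $h^1$ and using event (i) in place of (ii): for $x\in\Theta_{N,r}^1\cap A_{N,\lambda}\cap A_{N,\lambda}^1$ and $w:=\Pi(x)=\arg\max_{\Lambda_{2r}(x)}h$, the inequalities $h_w\ge h_x\ge m_N-\lambda$ together with $\lv x-w\rv\le 2r\le N/(r/2)$ and event (i) force $\lv x-w\rv<r/2$, whence $\Lambda_r(w)\subseteq\Lambda_{2r}(x)$ and $w\in\Theta_{N,r}$; the bound $h_w\ge h_x$ is automatic since $w$ maximises $h$ over $\Lambda_{2r}(x)\ni x$, while $h_w-h_x=a_N(h^1_w-h^1_x)+(\hat h^2_w-\hat h^2_x)\le\delta$ because now $w\in\Lambda_r(x)$ and $x\in\Theta_{N,r}^1$ give $h^1_w\le h^1_x$, the $\hat h^2$-difference being again controlled by (iii). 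Letting $\delta\downarrow 0$, then $r\to\infty$, then $\lambda\to\infty$, in that order, completes the argument; the $\delta$- and $\lambda$-passages carry no content, all of it residing in the $N$- and $r$-limits.

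The step I expect to need the most care --- the main obstacle --- is the matching of parameters in the crucial step: one invokes \Cref{thm:4Dm-geo} at scale $r/2$, where its high-probability window is $[r/2,\,N/(r/2)]$ in distance and $m_N-c\log\log(r/2)$ in height, and one must check that the crude estimates $r/2\le\lv x-y\rv\le 2r$ and $h^1_x,h^1_y\ge m_N-\lambda$ actually land inside this window --- the distance upper bound needing $N\ge r^2$ and the height condition needing $c\log\log(r/2)\ge\lambda$, which is legitimate precisely because $r\to\infty$ is taken before $\lambda\to\infty$. Everything else (the two $\Lambda$-ball inclusions and the two three-term expansions of $h_x-h_y$) is routine; note in particular that this lemma never uses $a_N\to1$, only $a_N>0$.
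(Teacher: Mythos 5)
Your argument is correct and follows essentially the same route as the paper's own proof: both rest on the clustering event supplied by \Cref{thm:4Dm-geo} for the two membrane fields $h$ and $h^1$ (forcing $\lv\Pi^1(x)-x\rv\le r/2$ and hence $\Lambda_r(\Pi^1(x))\subseteq\Lambda_{2r}(x)$), combined with the oscillation lemma for $\hat h^2$ and the decomposition $h=a_Nh^1+\hat h^2$ to control the height difference. Your write-up is in fact slightly more careful about the parameter matching ($c\log\log(r/2)\ge\lambda$, $N\ge r^2$) and spells out the mirror implication, which the paper treats as symmetric and omits.
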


Now, we can prove the main theorem which characterizes the distributional invariance of the extremal process.
\begin{proof}[Proof of \Cref{thm:dysoninv}]
	Denote $x \in \Lambda_x\lp r \rp$ as the maximum of $h^{1}$ in this neighborhood $\Lambda_x\lp r \rp$. Consequently, we have that $h_x^{1} = 2\sqrt{2g}\log N + o\lp \log N \rp$. Next, we expand the sum for $h$ using Taylor expansion, i.e.
	\bequn
	h= h^{1} - \frac{t}{2g\log N}h^{1} + \sqrt{\frac{t}{g\log N}} h^{2} + o\lp 1 \rp.
	\eequn
	At each point $z \in \Lambda_x\lp r \rp$, we have
	\bequn
		\begin{aligned}
			h_z= & \ h_z^{1} - \frac{t}{2g\log N}h_z^{1} + \sqrt{\frac{t}{g\log N}} h_z^{2} + o\lp 1 \rp 		\\
			= & \ h_z^{1} - \frac{t}{2g\log N}h_x^{1} + \sqrt{\frac{t}{g\log N}} h_x^{2} + o\lp 1 \rp		\\
			= & \ h_z^{1} - t\sqrt{\frac{2}{g}} + W_t + o\lp 1 \rp = h_z^{1} - \frac{\pi t}{2} + W_t + o\lp 1 \rp,
		\end{aligned}
	\eequn
	where in the above derivation we use frequently the estimate $\frac{t}{2g\log N}\norml h_z^{i} - h_x^{i} \normr = o\lp 1 \rp, \forall z \in \Lambda_x\lp r \rp$. This is due to $z \in \Lambda_x\lp r \rp$, we have that 
	\bequn
		\Var\lp h_z^{i} - h_x^{i} \rp = \Var\lp h_z^{i} \rp + \Var\lp h_x^{i} \rp - 2\Cov\lp h_z^{i}, h_x^{i} \rp = O\lp 1 \rp,
	\eequn
	by the fourth asymptotic property \eqref{non-diag-G} of Green's function of 4D membrane models. The term $\frac{t}{2g\log N}h_x^{1} $ is reduced by the assumption that it is the maximum of $h^{1}$. Lastly, the term $W_t$ comes from the fact that the term $\sqrt{\frac{t}{g\log N}} h_x^{2}$ has variance $t + o\lp 1 \rp$. Moreover, for $x, z$ in two different clusters, one can show that $\lp \sqrt{\frac{t}{g\log N}} \rp^2 \Cov\lp h_z^{i} , h_x^{i} \rp$, indicating that different clusters have this independent r.v.

	Consider the set $\Delta_{N, \rho}^1(\lambda) := \lbb x \in V_{N, \rho}: \lv h_x^1 - m_N \rv \leq \lambda \rbb$. Since $f$ has compact support, the level sets of $h, h^1$ are related as in \Cref{levelset}. Using also the tightness of the maximum of $h^1$, we thus have
	\bequn
		\lim_{\lambda \rightarrow \infty} \limsup_{r \rightarrow \infty} \limsup_{\delta \downarrow 0}\limsup_{N \rightarrow \infty} \mbP\Big( \sum_{x \in \Theta_{N, r}^1 \\ \Delta_{N, \delta}^1(\lambda)} f(\frac{x}{N}, h_x - m_N) > \epsilon \Big) = 0.
	\eequn
	This permits us to focus only on the sum of $f(\frac{x}{N}, h_x - m_N)$ over $x \in \Theta_{N, r}^1 \cap \Delta_{N, \rho}^1(\lambda)$. As $f$ is uniformly continuous, we now replace $h_x$ by $h_x^1 - \frac{\alpha}{2}t + \hat{h}^{2}$ and control the difference of the sums arising from the interpolation between the two fields using the bounded convergence theorem and the tightness results.
	
	Using  \Cref{Ginterpolation}, we conclude that
	\bequn
		\begin{aligned}
			\lim_{\lambda \rightarrow \infty} \limsup_{r \rightarrow \infty} \limsup_{\delta \downarrow 0}\limsup_{N \rightarrow \infty} & \ \mbP\Bigg( \Big| \la \eta_{N, r}, f \ra \\
			 - & \sum_{x \in \Theta_{N, r}^1 \cap \Delta_{N, \delta}^1(\lambda)} f(\frac{x}{N}, h_x^1 - m_N  -\frac{\alpha}{2}t + \hat{h}_x^2) \Big| > \epsilon \Bigg) = 0,
		\end{aligned}
	\eequn
	holds for any $\epsilon > 0$.	\\
	
	We aim to compute the limit of $\mbE e^{-\la \eta_{N, r}, f \ra}$. For this, we replace $\la \eta_{N, r}, f \ra$ by the sum in the above equation and take the conditional expectation given $h^1$. For $h^1$ such that the event $A_{N, \lambda, r}^1$ occurs, we have for all $u, v \in \Theta_{N, r}^1 \cap \Delta_{N, \rho}^1(\lambda)$ with $u \neq v$,
	\bequn
		\Var(\hat{h}_u^2) = t + o(1), \quad \Cov(\hat{h}_u^2, \hat{h}_v^2) = o(1), \quad N \rightarrow \infty.
	\eequn
	Therefore, by a lemma, on $A_{N, \lambda, r}^1 \cap \lbb |A_{N, \lambda}^1| \leq M \rbb$ we have
	\bequn
		\begin{aligned}
			& \ \mbE\Bigg[ \exp\Big\{ \sum_{x \in \Theta_{N, r}^1 \cap \Delta_{N, \delta}^1(\lambda)} f(\frac{x}{N}, h_x^1 - m_N  -\frac{\alpha}{2}t + \hat{h}_x^2) \Big\} \Bigg| h^1 \Bigg]  			\\
			= & \  e^{o(1)}\wtd\mbE \exp\Big\{ \sum_{x \in \Theta_{N, r}^1 \cap \Delta_{N, \delta}^1(\lambda)} f(\frac{x}{N}, h_x^1 - m_N  -\frac{\alpha}{2}t + W_t^{(x)}) \Big\}  			\\
			= & \exp\Big\{ o(1) - \sum_{x \in \Theta_{N, r}^1 \cap \Delta_{N, \delta}^1(\lambda)} f_t(\frac{x}{N}, h_x^1 - m_N) \Big\}, \quad N \rightarrow \infty,
		\end{aligned}
	\eequn
	where the expectation $\wtd \mbE$ is with respect to a measure under which $\lbb W_t^{(x)}: x \in \Theta_{N, r}^1 \cap \Delta_{N, \delta}^1(\lambda) \rbb$ are i.i.d. Gaussians with mean $0$ and variance $t$, the function $f_t$ is as defined as \eqref{f_t} The $o(1)$ terms is random but tending to zero uniformly in $h^1$ under consideration.	
	
	Taking expectation also w.r.t.\ $h^1$ and noting that the function under expectation is at most one, Theorem \Cref{thm:4Dm-geo} and \Cref{prop:tight} and the Bounded Convergence Theorem thus ensure
	\bequn
		\begin{aligned}
			\lim_{\lambda \rightarrow \infty} \limsup_{r \rightarrow \infty} \limsup_{\delta \downarrow 0}\limsup_{N \rightarrow \infty} & \Bigg| \mbE e^{-\la \eta_{N, r}, f \ra} \\
			 - & \mbE \exp\Big\{ \sum_{x \in \Theta_{N, r}^1 \cap \Delta_{N, \delta}^1(\lambda)} f(\frac{x}{N}, h_x^1 - m_N  -\frac{\alpha}{2}t + \hat{h}_x^2)\Big\} \Bigg| = 0.
		\end{aligned}
	\eequn
	We would like to drop the restriction to $\Delta_{N, \delta}^1$ and interpret the $N \rightarrow \infty$ limit (along the desired subsequence) of the second term using the limit process $\eta$. Unfortunately, we can not just roll the argument backward for $f_t$ in place of $f$ because $f_t$ no longer has compact support. Notwithstanding, from the fact that $f$ does, we get
	\bequn
		f_t(x, h) \leq Ce^{-ch^2}.
	\eequn
	Therefore, on the event that that $\lbb |A_N^1(\theta)| < Ce^{C\theta}: \forall \theta > \lambda \rbb \cap \lbb h_x^1 \leq m_N + \lambda: \forall x \in V_N \rbb,$ whose probability tends to $1$ as $N \rightarrow \infty$ and $\lambda \rightarrow \infty$ thanks to \Cref{prop:tight} and the tightness of the centered maximum, we have 
	\bequn
		\Bigg| \sum_{x \in \Theta_{N, r}^1 \cap \Delta_{N, \delta}^1(\lambda)} f_t(\frac{x}{N}, h_x^1 - m_N) \Bigg| \leq C\sum_{n \geq 1} e^{C(\lambda + n) - c(\lambda + n - 1)^2}.
	\eequn
	This is summable in $n$ and tends to $0$ as $\lambda \rightarrow \infty$. Using also \Cref{subst} we arrive at 
	\bequn
		\limsup_{N \rightarrow \infty} \lv \mbE e^{-\la \eta_{N, r_N}, f \ra} - \mbE e^{-\la \eta_{N, r_N}, f_t \ra}\rv = 0.
	\eequn
	Therefore, taking the limit, we recover \eqref{equ:inveq} in the theorem.
\end{proof}

\subsection{Extracting a Poisson law and uniqueness}\label{sec:unique}

As we have mentioned in the introduction, \Cref{thm:ppp} follows from \Cref{thm:dysoninv}, and the proof is omitted. We conclude that all the subsequential limits of the process $\eta_N, r_N$ take the form in
\Cref{thm:ppp}; the last remaining issue is to establish uniqueness in Theorem \ref{thm:Poisson-limit}. 
The idea is to prove the uniqueness via the uniqueness of the Laplace transformation as below:
\begin{proposition}\label{thm:joint-unique}
	Let $(A_1, \ldots, A_m)$ be a collection of disjoint non-empty open subsets of $[0, 1]^4$. Then the law of $\lp \max\{ h_x: x \in V_N, x/N \in A_l \} - m_N \rp_{l=1}^m$ 
	converges weakly as $N\rightarrow \infty$.
\end{proposition}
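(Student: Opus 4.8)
The plan is to follow the strategy of \cite{biskup2016extreme}: reduce the statement to the uniqueness of the Laplace transform of the limiting random measure $Z$, and prove that uniqueness by comparing the membrane field on the regions $A_l$ with an auxiliary field assembled from independent pieces via the Gibbs--Markov property. First I would set up the reduction. By \Cref{thm:ppp} and \Cref{thm:dysoninv}, every subsequential limit $\eta$ of $\eta_{N,r_N}$ equals $\PPP\lp Z\lp dx\rp\otimes e^{-\pi t}\,dt\rp$ for some random $Z$; writing $M_N\lp A\rp:=\max\{h_x^{N}: x\in V_N,\ x/N\in A\}$, a standard argument using the cluster geometry (\Cref{thm:4Dm-geo}) together with a first--moment estimate on a thin boundary layer shows that the argmax of $h^N$ over $\{x/N\in A_l\}$ lies in $\Theta_{N,r_N}$ and $r_N$-deep inside $NA_l$ with probability $1-o(1)$, whence
\[
	\mbP\lp M_N\lp A_l\rp-m_N\le t_l,\ l=1,\dots,m\rp = \mbP\lp \eta_{N,r_N}\lp A_l\times\lp t_l,\infty\rp\rp=0,\ l=1,\dots,m\rp + o(1),
\]
and the right--hand side converges, along the subsequence, to $\mbE\big[\exp\big(-\tfrac{1}{\pi}\sum_{l=1}^m Z\lp A_l\rp e^{-\pi t_l}\big)\big]$. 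So the proposition is equivalent to the assertion that this last expectation does not depend on the subsequence. Since the maximum over a union of sets is the maximum of the maxima, and since a general open $A_l$ is exhausted from inside by finite unions of open dyadic boxes (the difference in the maxima being controlled by the same first--moment layer estimate), it suffices to treat the case where $A_1,\dots,A_m$ are disjoint open boxes.

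Next I would construct the auxiliary field. With $A_1,\dots,A_m$ disjoint open boxes at pairwise distance $\ge 3\delta$, let $U_l$ be the box concentric with $NA_l$ and of side $\mathrm{side}(NA_l)-2\delta N$, so the $U_l\subset V_N$ are disjoint and at mutual distance $\ge \delta N$. The Gibbs--Markov property (using that the bi--Laplacian has range $2$) applied to $U:=\bigsqcup_l U_l$ gives $h_x^{N}=h_x^{U_l}+\varphi_x$ for $x\in U_l$, where $h^{U_1},\dots,h^{U_m}$ are independent 4D membrane fields on the boxes $U_l$ and the binding field $\varphi$ is centered Gaussian and independent of $(h^{U_l})_l$. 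The covariance inputs, all read off from \Cref{prop:4D-cov} (the first two bullets and \eqref{far-bound}) and \Cref{prop:var-decomp}, are: $\Var\lp\varphi_x\rp=\tfrac{8}{\pi^2}\log\lp N/\mathrm{side}\,U_l\rp+O_\delta(1)=O_\delta(1)$ and $\mbE\lb(\varphi_x-\varphi_y)^2\rb=O(1)$ for $x,y\in U_l$, both uniformly in $N$; $\Cov\lp\varphi_x,\varphi_y\rp=f_3\lp x/N,y/N\rp+o(1)$ for $x\in U_l$, $y\in U_{l'}$ with $l\ne l'$ (from \eqref{non-diag-G}); and $\varphi$ is $o(1)$-flat near the centre of each $U_l$ (\Cref{prop:var-decomp}). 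This is precisely the ``auxiliary field and its covariance structure'' alluded to in \Cref{sec:Mainresults}.

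To conclude, I would feed this decomposition into the \cite{biskup2016extreme} framework. On the event of probability $1-o(1)$ (by \Cref{thm:4Dm-geo} and \Cref{prop:tight}) that for each $l$ the maximum of $h^N$ over $\{x/N\in A_l\}$ is attained $r_N$-deep inside $U_l$ and $\max_l M_N\lp A_l\rp\le m_N+\lambda$, one has $M_N\lp A_l\rp=\max_{U_l}\lp h^{U_l}+\varphi\rp$. Conditioning on $\varphi$ makes the coordinates of $\lp\max_{U_l}(h^{U_l}+\varphi)\rp_{l}$ independent, the $l$-th being the maximum of a membrane field on the box $U_l$ ``Dysonized'' by the (given $\varphi$, deterministic) function $\varphi|_{U_l}$. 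Using \Cref{thm:convergence-in-law} for the single box $U_l$ together with the invariance under Dysonization (\Cref{thm:dysoninv}) one expects each conditional law to converge, as $N\to\infty$, to an explicit functional of $\varphi|_{U_l}$; averaging over $\varphi$, whose Gaussian law converges by the covariance estimates above to a fixed Gaussian field on $\bigcup_l A_l$, then yields an $N\to\infty$ limit for $\mbP\lp M_N\lp A_l\rp-m_N\le t_l,\ \forall l\rp$ that is the same for every subsequence, and letting $\delta\downarrow0$ with a consistency argument finishes the proof.

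The hard part will be the step just sketched, namely making rigorous that the single--box maximum Dysonized by $\varphi|_{U_l}$ has a subsequence--independent limit. The difficulty is that $\varphi|_{U_l}$ has $O(1)$ --- not $o(1)$ --- oscillation over $U_l$ (it is $o(1)$-flat only near the centre, by \Cref{prop:var-decomp}), so $\max_{U_l}(h^{U_l}+\varphi)$ is a genuine Dysonized maximum and not merely a shift of $\max_{U_l}h^{U_l}$, whereas \Cref{thm:convergence-in-law} pins down only the \emph{total--mass} law of the single--box measure $Z^{U_l}$, not $Z^{U_l}$ as a measure. Propagating uniqueness therefore requires iterating the Gibbs--Markov decomposition over a whole hierarchy of scales inside each $U_l$ and controlling the accumulated binding fields --- the multi--scale bookkeeping of \cite{biskup2016extreme}, which there rests on a random--walk / potential--theoretic representation of the GFF Green's function that is unavailable here, so every such estimate must be redone using the explicit biharmonic asymptotics of \Cref{prop:4D-cov} and \Cref{prop:var-decomp}. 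A secondary point is to check that the bounded low--frequency cross--covariance $f_3$ between distinct $U_l$ does not affect the limit, being absorbed into the fixed limiting Gaussian field.
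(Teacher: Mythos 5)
There is a genuine gap, and you have in fact located it yourself: the subsequence-independence of the conditional law of $\max_{U_l}\lp h^{U_l}+\varphi\rp$ is the entire content of the proposition, and the two inputs you propose for that step cannot deliver it. \Cref{thm:convergence-in-law} identifies only the law of the centred maximum of a single box, i.e.\ the Laplace transform of the \emph{total mass} of the single-box measure, whereas integrating a binding field $\varphi|_{U_l}$ with $O(1)$ oscillation against the extremal landscape requires control of the spatial measure $Z^{U_l}(dx)$ itself --- which is precisely the object whose uniqueness is being proved, so the argument is circular at this point. Likewise \Cref{thm:dysoninv} is an invariance of subsequential limits under an \emph{independent Brownian} perturbation of the heights; it is not a device for computing the law of a maximum shifted by a fixed, spatially varying deterministic function, and it yields no uniqueness on its own (it is consumed by \Cref{thm:ppp} only to produce the Poissonian form with an unidentified $Z$). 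Hence, after your reduction, nothing forces the limit in your Laplace-transform identity to agree across subsequences; the ``multi-scale bookkeeping'' you defer to is not a technical appendix but the actual proof.

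For comparison, the paper proves the proposition without ever invoking the extremal process or $Z$ --- indeed the logical order is the reverse of yours: \Cref{thm:joint-unique} is established first and then used in the proof of \Cref{thm:Poisson-limit} to extract uniqueness of $Z$. Concretely, the paper proves \Cref{thm:aux}: the field is split on a $1/K$-mesh into a fine field (independent membrane models on the mesh boxes, at scale $N/K$) and a coarse field converging, for fixed $K$ as $N\rightarrow\infty$, to a continuous Gaussian field $\Phi_K$; one shows the maximum over each $A_l$ is attained with high probability in a mesh box where the fine field exceeds $m_{N/K}+b_K$, and couples the field with an $N$-independent auxiliary process $\lp \mcP_i^K, Y_i^K, z_i^K\rp$ encoding the fine-field upper tail. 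The Levy distance between $\mu_{N,\underline{A}}$ and the $N$-independent law $\mu_{K,\delta,\underline{A}}$ then tends to $0$, so $\mu_{N,\underline{A}}$ is Cauchy and converges weakly --- no identification of the limit, and no uniqueness of any spatial measure, is ever needed. To repair your plan you would have to import exactly this fine/coarse decomposition together with the tail analysis of the fine-field maximum inside each mesh box; at that point you are reproducing the paper's argument rather than providing an alternative route.
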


\Cref{thm:joint-unique} is a direct corollary of the following theorem and we prove it by a modification of the proof in \cite{biskup2017extrema}. 
\begin{proposition}\label{thm:aux}
	Set $b_K:=c\log\log K$ for some $c>0$ sufficiently small. For any $m\geq 1$, let $d^{(m)}$ denote the Levy metric on probability measures on $\mbR^m$. Then, for any $m\geq 1$ and any collection
	$\underline{A} = \lp A_1, \ldots, A_m\rp$ of non-empty open subsets of $[0, 1]^4$,
	\begin{equation}
		\lim_{\delta \downarrow 0}\limsup_{K\rightarrow \infty}\limsup_{N\rightarrow \infty}d^{(m)}(\mu_{N, \underline{A}}, \mu_{K, \delta, \underline{A}}) = 0.
	\end{equation}
\end{proposition}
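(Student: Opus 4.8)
The plan is to run the auxiliary–field comparison of \cite{biskup2017extrema}, with the 2D DGFF Green's function inputs replaced by the covariance asymptotics of \Cref{prop:4D-cov} and the variance–decomposition bound of \Cref{prop:var-decomp}. Fix $K$ and restrict to $N$ divisible by $K$ (the general case follows by a routine rounding argument). Partition $V_N$ into $K^4$ congruent sub-boxes $B_1,\dots,B_{K^4}$ of side $N/K$, pairwise separated by corridors of graph-width $2$ — the separation being forced by the range-$2$ structure of the bi-Laplacian. Applying the Gibbs--Markov property to the thinned union $U=\bigcup_i B_i^{\circ}$ yields $h^N_v=\phi^{N,K}_v+\check h_v$, where $\check h$ restricted to each $B_i$ is an independent membrane model on $B_i$ and $\phi^{N,K}=\mbE\lb h^N\mid \mcF_{\p_2 U}\rb$ is a centered Gaussian field independent of $\check h$. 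By definition $\mu_{K,\delta,\underline A}$ is the law of the vector whose $l$-th coordinate is
\[
\max\Bigl\{\phi^{N,K}_{c_i}+\bigl(\max_{B_i}\check h-m_{N/K}\bigr)\ :\ B_i/N\text{ lies at distance}\ge\delta\text{ inside }A_l,\ \max_{B_i}\check h\ge m_{N/K}-b_K\Bigr\}+\bigl(m_{N/K}-m_N\bigr),
\]
with $c_i$ a reference point of $B_i$; thus $\delta$ confines attention to sub-boxes well inside $A_l$, and $b_K=c\log\log K$ is a lower cutoff on the local maxima. Recall $m_{N/K}-m_N=-\tfrac{8}{\pi}\log K+o(1)$ and $\Var(\phi^{N,K}_{c_i})=g\log K+O(1)$, which is the bookkeeping that makes this centering consistent.

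Following the structural argument of \cite{biskup2017extrema}, I would first show that, on an event of probability tending to one, the vector $\lp \max\{h^N_x:x/N\in A_l\}-m_N\rp_{l}$ is realized up to $o(1)$ errors by the very functional of $(\phi^{N,K},\check h)$ appearing in the definition of $\mu_{K,\delta,\underline A}$. The ingredients needed to run that argument in the present setting are: the localization of near-maxima into clusters of diameter $\le r\ll N/K$ (\Cref{thm:4Dm-geo}), which confines each record to a single sub-box; the $O(\sqrt\epsilon)$ bound on the oscillation of the binding field over an $\epsilon$-fraction of a box (\Cref{prop:var-decomp}), which lets one replace $\phi^{N,K}$ by its value at a reference point near each cluster; the convergence of the centered maximum of a membrane model on a box of side $N/K$ (\Cref{thm:convergence-in-law}), applied inside each $B_i$; and the uniform tail bound of \Cref{prop:tight} together with the tightness of the centered maximum, which let one truncate to $\{h^N_x\ge m_N-\lambda\}$ and send $\lambda\to\infty$ at the end. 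Since $(\phi^{N,K},\check h)$ is \emph{exactly} the Gibbs--Markov split used to define $\mu_{K,\delta,\underline A}$, the two functionals then differ only through four genuine discrepancies: (i) sub-boxes not $\delta$-inside some $A_l$; (ii) sub-boxes violating the $b_K$-cutoff; (iii) the discarded corridors; and (iv) replacing $\max_{x\in B_i,\,x/N\in A_l}$ by $\max_{B_i}$.

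It then remains to bound the probability of each discrepancy mattering, uniformly in the iterated limit $N\to\infty$, $K\to\infty$, $\delta\downarrow 0$. Items (iii) and (iv) are handled by the cluster geometry: a record landing in a corridor or straddling the boundary of $B_i$ would force a second point with $h^N\ge m_N-c\log\log r$ at controlled distance, which \Cref{cor:bound-pair-4Dm} and \Cref{lem:sprinkling} forbid with overwhelming probability once $r$ is large; with the corridors of fixed width $2$ this is negligible after $N\to\infty$ then $K\to\infty$. For (ii), a $b_K$-deficient box carrying a value near $m_N$ would require $\phi^{N,K}_{c_i}\ge \tfrac{8}{\pi}\log K+b_K-\lambda+o(1)$; since $\Var(\phi^{N,K}_{c_i})=\tfrac{8}{\pi^2}\log K+O(1)$, a Gaussian tail bound gives probability $\le K^{-4}(\log K)^{-\pi c}e^{O(\lambda)}$ per box, and a union bound over the $K^4$ boxes gives $(\log K)^{-\pi c}e^{O(\lambda)}\to0$ as $K\to\infty$ (for fixed $\lambda$), provided $c$ is chosen small — this is precisely the role of $b_K$. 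The main obstacle is item (i): showing that, with probability tending to one as $\delta\downarrow 0$, the global maximum over $A_l$ is attained $\delta$-deep inside $A_l$ (and, after first trimming a boundary layer of $V_N$ of relative width $o(1)$, which \Cref{prop:tight} renders harmless, away from $\p V_N$). This is also where the lack of a random-walk representation for the membrane Green's function bites: I would cover the $\delta$-collar of each $A_l$ by $O(\delta^{-3})$ boxes of side $\delta N$, use \Cref{cor:several-maxima-exp} and the enhanced tail of \Cref{cor:several-maxima-tail} to control the number of points exceeding $m_N-\lambda$ inside each such box, apply the Gibbs--Markov decomposition across the collar together with the covariance bound \eqref{far-bound} to estimate the relevant conditional variances, and then invoke \Cref{thm:4Dm-geo} to conclude that none of these collar points is recordbreaking for $A_l$. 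Assembling the bounds into the Lévy-metric estimate and taking the limits in the stated order yields \Cref{thm:aux}; the convergence of $\mu_{K,\delta,\underline A}$ itself — and hence \Cref{thm:joint-unique} — then follows from the joint convergence of the binding field $\phi^{N,K}$, which is the point where the $f_1,f_2,f_3$ asymptotics of \Cref{prop:4D-cov} enter.
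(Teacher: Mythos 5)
There is a genuine gap, and it lies in what you take $\mu_{K,\delta,\underline{A}}$ to be. In the paper, $\mu_{K,\delta,\underline{A}}$ is the law of $\lp G^*_{K,\delta,A_l}\rp_{l=1}^m$ built from an $N$-independent auxiliary process: i.i.d.\ triplets $(\mcP_i^K, Y_i^K, z_i^K)$ whose laws encode the \emph{tail} of the fine-field maximum (the constant $C_*$, the overshoot law of $Y_i^K$) and the law $\psi$ of the location of the maximizer, combined with the continuum coarse field $\Phi_K$. Your proposal instead declares "by definition" that $\mu_{K,\delta,\underline{A}}$ is a functional of the finite-$N$ Gibbs--Markov pair $(\phi^{N,K},\check h)$ with a cutoff $\max_{B_i}\check h\ge m_{N/K}-b_K$. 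That is not the object in the statement, and the difference is not cosmetic: the entire point of the proposition (and of the Cauchy argument that deduces \Cref{thm:joint-unique} from it) is that the reference measure does not depend on $N$. With your surrogate, the stated limit would say nothing about weak convergence of $\mu_{N,\underline{A}}$ unless you separately prove that your $N$-dependent functional converges as $N\to\infty$ --- and that convergence requires the joint limit law of the centered fine-field maxima \emph{and of the locations of their maximizers} over the sub-boxes, i.e.\ exactly the "modified second-moment" computation of the fine-field tail and location density that the paper highlights as an essential step. \Cref{thm:convergence-in-law} gives only the centered maximum, not the location, and "joint convergence of the binding field $\phi^{N,K}$" does not supply it. In short, the coupling of $h$ with $(\mcP_i^K, Y_i^K, z_i^K)$ and the passage $h^c\Rightarrow\Phi_K$, which constitute the actual content of the paper's proof of \Cref{thm:aux}, are absent from your argument.

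A second, related issue is the sign of the $b_K$ cutoff. The paper restricts to boxes where the fine field is \emph{atypically large}, $h^f_{z_i}> m_{N/K}+b_K$ (second part of \Cref{prop:aux}: the global maximum is attained only at such boxes, and this is what makes the Bernoulli structure $\mbP(\mcP_i^K=1)\asymp b_K e^{-\sqrt{2\pi}b_K}$ the right reference). Your cutoff $\ge m_{N/K}-b_K$ retains a positive fraction of all $K^4$ boxes, so the Bernoulli-times-overshoot description is lost and your discrepancy item (ii) proves a strictly weaker statement than what the coupling needs. Your surrounding machinery (fine/coarse split, cluster geometry from \Cref{thm:4Dm-geo}, oscillation control via \Cref{prop:var-decomp}, tightness via \Cref{prop:tight}, trimming the $\delta$-collars) matches the paper's toolkit and handles the peripheral reductions correctly, but the central comparison to the $N$-free auxiliary law is missing.
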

To prove this theorem, we will use a coupling between the extreme process and a collection of i.i.d.\ random variables, 
which is based on the Gibbs-Markov property of the membrane model. Let us briefly recall several key definitions made in 
\cite[Section 5.1]{biskup2016extreme}: $w_i^K, i=1,\cdots,K^4$ is the $1/K$-mesh of $[0,1]^4$ box; $V_N^{K, i}$($V_N^{K, \delta, i}$)
is the partition (near-partitiion grid) of $V_N$ given $K|N, \delta N/K \in \mbN$:
\begin{equation}
	V_N^{K, i} := Nw_i^K + V_{N/K}, \quad V_N^{K, \delta, i} := Nw_i^K + (N\delta /K, N(1-\delta)/K)^4 \cap \mbZ^4.
\end{equation}
Moreover, \cite{biskup2016extreme} passes into continuum limit and introduce continuous version of the above set
\begin{eqnarray}
	B^{K, i} := w_i^K + (0,1/K)^4, \quad B^{K, \delta, i} := w_i^K + (\delta/K,(1-\delta)/K)^4.
\end{eqnarray}


We introduce a membrane field $h_v^f$ that we refer
to as the \textit{fine field} and that is a membrane model with zero boundary on these sub-boxes - it is constructed by
subtracting from the original membrane model field its conditional expectation, given the sigma-algebra generated
by the membrane model on the boundary of these sub-boxes. The fine field has the advantage that, due to the Markov property of the membrane model, its values in disjoint boxes are independent. We define the coarse
field as the difference $h_v^c = h_v^N - h_v^f$. The coarse field is, of course, correlated over the whole box $V_N$, but it is relatively smooth; in fact, for fixed $K$ as $N \rightarrow \infty$, the field obtained by rescaling the
coarse field onto a box of side length $1$ in $\mbR^4$ converges to a limiting Gaussian field that possesses continuous sample paths on appropriate subsets of $[0, 1]^4$ (essentially, away from the boundaries of
the sub-boxes).

An important step in our approach is the computation of the tail probabilities of the maximum
of the fine field when restricted to a box of side length $N/K$, together with the computation
of the law of the location of the maximum (in the scale $N/K$). These computations are done
by building on the tail estimates derived previously and using a modified second-moment method.
Another important step is to show that the maximum of the GFF occurs only at points where the
fine field is atypically large. Once these two steps are established, we can describe the limit law
of the membrane model by an appropriate mixture of random variables whose distributions are determined by
the tail of the fine field. The mixture coefficients are determined by an (independent) percolation
pattern of potential locations of the maximum and by the limiting coarse field, i.e.
\begin{equation}
	h^c \overset{\text{law}, \ N\rightarrow \infty}{\Longrightarrow} \Phi_K(\cdot),
\end{equation}
on $B^{K, \delta} = \cup B^{K, \delta, i}.$


Abbreviate $h^*:=\max_{x\in V_N}h_x$. The proof of the existence of the weak limit of the centered maximum $h^*-m_N$
is based on a comparison with an auxiliary process as follows: for a sequence of number $b_K$ determined later
define Bernoulli r.v.s $\mcP_i^K$, positive r.v.s $Y_i^k$
\begin{equation}
	\begin{aligned}
		\mbP(\mcP_i^K = 1) := & \ C_*b_Ke^{-\sqrt{2\pi}b_K},		\\
		\mbP(Y_i^K \geq x) := & \ \frac{b_K+x}{b_K} e^{-\sqrt{2\pi}x},		\\
		\mbP(K(z_i^K - w_i^K) \in A) := & \ \int_A \psi(x)dx,
	\end{aligned}
\end{equation}
where $C_*$ ($\psi$) is a constant (probability density) related to the upper tail of the maximum,

Using these, we set
\begin{equation}
	G_i^{K, \delta} := \lbb\begin{aligned}
		& Y_i^K + b_K - 2\sqrt{\gamma}\log K + \Phi_K(z_i^k), \quad \mcP_i^K = 1, z_i^K \in B_N^{K, \delta, i},	\\
		& - 2\sqrt{\gamma}\log K, \qquad otherwise
	\end{aligned}\right.
\end{equation}
Denote $G_{K, \delta}^* := \max_{i=1}^{K^2}G_i^{K, \delta}$. Let $\mu_N$ denote the law of $h^*-m_N$ and $\mu_{K, \delta}$ stand
for the law of $G_{K, \delta}^*$. 
Theorem 2.3 in [19] then asserts that for $b_K := c \log \log K$ with some small $c > 0$, as $N \rightarrow \infty, K \rightarrow \infty$ and $\delta \rightarrow 0$, the Levy distance between $\mu_N$ and
$\mu_{K, \delta}$ tends to $0$. This yields weak convergence of $h^*-m_N$ as the auxiliary process does
not depend on $N$. 

For our purposes, we need a generalized unique result that addresses the joint law of maxima over several (different) subsets of $[0, 1]^4$. To this end, for any non-empty
open $A \subset [0, 1]^4$ we first define $h_A^*$ as the maximum of the 4D membrane model in $\{x \in V_N : x/N \in A\}$. Then, given a collection $\underline{A} = (A_1, . . . , A_m)$ of non-empty open subsets of $[0, 1]^4$, we
use $\mu_{N, \underline{A}}$ to denote the joint law of $\{h_{A_l}^* - m_N\}_{l=1}^m$. For the above auxiliary process we similarly define
\begin{equation}
	G_{K, \delta, \underline{A}}^* := \max\{G_i^{K, \delta}: B^{K, i} \subset A\},
\end{equation}
and write $\mu_{K, \delta, \underline{A}}$ for the joint law of $\{G_{K, \delta, \underline{A_l}}^*\}_{l=1}^m$.

We first gather the following ingredients for \Cref{thm:aux}.
\begin{lemma}
    The sequence $h_A^* - m_N$ is tight.
\end{lemma}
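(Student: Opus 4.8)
The plan is to control the two tails of $h_A^*-m_N$ separately, since tightness amounts to uniform (in $N$) control of both. The upper tail is essentially free: from $\{x\in V_N:x/N\in A\}\subseteq V_N$ one has $h_A^*\le h^*:=\max_{x\in V_N}h_x^N$, and $h^*-m_N$ converges in law — hence is tight — by \Cref{thm:convergence-in-law}; therefore $\sup_N\mbP(h_A^*-m_N>\lambda)\le\sup_N\mbP(h^*-m_N>\lambda)\to 0$ as $\lambda\to\infty$. So the real work is the lower tail, i.e.\ showing $\lim_{\lambda\to\infty}\sup_N\mbP(h_A^*-m_N<-\lambda)=0$.

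For the lower tail I would first fix a reference box inside $A$. Since $A$ is open and non-empty, $A\cap(0,1)^4$ is a non-empty open subset of $\mbR^4$, hence contains a closed box $Q_0=\prod_{i=1}^4[a_i,b_i]$ with $\rho_0:=\min_i\min(a_i,1-b_i)>0$ and $c_0:=\min_i(b_i-a_i)>0$; equivalently one may take $Q_0$ to be (the closure of) one of the mesh cells $B^{K,i}$ for $K$ large. Consequently, for all $N$ large, $\{x\in V_N:x/N\in A\}$ contains a box $Q=Q(N)$ of side at least $c_0N/2$ lying at distance at least $\rho_0N/2$ from $\p V_N$. Let $Q'\subset Q$ be the concentric box of half the side length. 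By the Gibbs--Markov property \eqref{G-M} there is a coupling with $h_x^N=h_x^{Q}+\psi_x$ on $Q$, where $\psi=\mbE[h^N\mid\mcF_{\p_2 Q}]$ is a centered Gaussian field independent of the membrane model $h^{Q}$ on $Q$ (this is the coarse/fine splitting used above). Then
\begin{equation*}
	h_A^*\ \ge\ \max_{x\in Q'}h_x^N\ \ge\ \max_{x\in Q'}h_x^{Q}\ +\ \min_{x\in Q'}\psi_x .
\end{equation*}

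It then suffices to argue that each term on the right is tight after an $N$-independent (but $A$-dependent) recentering. For the first term, $h^{Q}$ is a membrane model on a box of side comparable to $N$, so one more application of \eqref{G-M} inside $Q$ bounds $\max_{Q'}h^{Q}$ from below by the centered maximum of a membrane model on $Q'$ — tight by \Cref{thm:convergence-in-law} again — plus the minimum over $Q'$ of an independent smooth conditional field; and $m_{|Q|}-m_N$, $m_{|Q'|}-m_N$ are constants depending only on $c_0$, using $m_N=\frac{8}{\pi}\log N-\frac{3}{2\pi}\log\log N$. For the second term, the rescaled coarse field converges as $N\to\infty$ to a Gaussian field with continuous sample paths on the interior of the limiting cell — this is precisely the convergence $h^c\Rightarrow\Phi_K$ recorded above, whose covariance input is the local asymptotics of the biharmonic Green's function in \Cref{prop:4D-cov} — so $\min_{Q'}\psi$ is tight. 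Combining, $h_A^*-m_N$ is bounded below by a sum of tight variables and an $A$-dependent constant, which yields uniform control of the lower tail; together with the upper-tail bound, this gives tightness.

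I expect the only non-routine point to be the lower-tail estimate, and within it the crux is that the coarse (conditional-expectation) field is both independent of the fine field and regular enough — in the scaling limit, continuous — so that restricting attention from $Q$ to the concentric box $Q'$ costs only a tight amount. Once that regularity is in hand, everything else is bookkeeping with the asymptotics of $m_N$ and repeated use of \Cref{thm:convergence-in-law}.
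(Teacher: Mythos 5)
Your upper-tail argument is the same as the paper's ($h_A^*\le h^*$ plus \Cref{thm:convergence-in-law}), but your lower-tail argument has a genuine gap at the nested step. In your second application of \eqref{G-M} you bound $\max_{Q'}h^{Q}\ge \max_{Q'}h^{Q'}+\min_{x\in Q'}\mbE\lb h^{Q}_x\,\big|\,\mcF_{\p_2 Q'}\rb$ and claim the second term is tight. It is not: for $x$ within $O(1)$ of $\p Q'$ one has $\Var(h^{Q'}_x)=O(1)$ while $\Var(h^{Q}_x)\asymp \gamma\log N$, so the conditional field has variance of order $\log N$ in a boundary layer containing order $N^3$ points, and its minimum over all of $Q'$ is therefore typically of order $-c\log N$. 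The ``smooth conditional field'' heuristic is an interior statement, valid only at macroscopic distance from the conditioning boundary --- which is exactly why your first step (and the paper) works on a $\delta$-shrunk box; but once you shrink, the inner maximum runs over a strict sub-box of $Q'$, \Cref{thm:convergence-in-law} no longer applies to it directly, and iterating the shrinking produces an infinite regress rather than a proof. Separately, even your first step ($\min_{Q'}\psi$ tight) leans on the functional convergence $h^c\Rightarrow\Phi_K$ with continuous limiting sample paths, a substantially heavier input that the paper only describes in prose and does not need for this lemma.

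The paper's proof avoids controlling minima of coarse fields over macroscopic sets altogether: it fixes a mesh cell $B^{K,i}\subset A$, lets $z_i$ be the argmax of the fine field $h^f$ over the $\delta$-shrunk box $V_N^{K,\delta,i}$, and uses $h_A^*\ge h^f_{z_i}+h^c_{z_i}$. The fine term is handled by \Cref{thm:convergence-in-law} (with a $\delta\downarrow 0$ limit), and the coarse field is evaluated only at the single random point $z_i$: since $z_i$ is measurable with respect to $h^f$, which is independent of $h^c$, and $z_i$ is at distance at least $\delta N/K$ from the sub-box boundary, \Cref{prop:var-decomp} together with \Cref{prop:4D-cov} gives $\Var(h^c_{z_i})\le \gamma\log K+C'(\delta)$ uniformly in $N$, so a pointwise Gaussian tail bound finishes the lower tail. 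To repair your argument, replace both of your box-wide minima by such an evaluation of the coarse field at the fine-field argmax.
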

\begin{proof}
	One can find $K, i$ s.t. $B^{K, i} \subset A$. Fixing $\delta \in (0,1)$ and letting $z_i \in V_N^{K, \delta, i}$ be the maximum of $h^f$ 
	in $V_N^{K, \delta, i}$, we have $h_{z_i}^f + h_{z_i}^c \leq h_A^* \leq h^*.$ The tightness of $h^*$ implies that
	\begin{equation}
		\lim_{t\rightarrow \infty} \limsup_{N\rightarrow \infty} \mbP(h_A^* - m_N > t) \leq \lim_{t\rightarrow \infty} \limsup_{N\rightarrow \infty} \mbP(h_A^* - m_N > t) = 0.
	\end{equation}
	given the tightness of the upper tail of $h_A^*.$

	Concerning the lower tail we note that, since $h^f$ restricted to $V^{K,i}$ is a membrane model on a translate of $V_{N'/K}$ 
	and due to \Cref{thm:convergence-in-law}, we have
	\begin{equation}\label{equ:equ1}
		\lim_{\delta \downarrow 0}\limsup_{t\rightarrow \infty} \limsup_{N\rightarrow \infty} \mbP(h_{z_i}^f - m_{N/K} < -t) = 0.
	\end{equation}
	At the same time, we have $\forall x \in V_N^{K, \delta, i}$:
	\begin{equation}
		\Var(h_x^c) = \Var(h_x) - \mbE(\Var(h_x|\mcF_{N, K})) \leq \gamma (\log N - \log(N'/K) ) + C \leq g\log K + C',
	\end{equation}
	where $C'$ depends on $\delta$. This follows from \Cref{prop:var-decomp} since $x$ is away from the boundary of both $V_N$ and
	$V_N^{K,\delta, i}$. Conditioning on $z_i$ and using the independence of $h^f$ and $h^c$ we have
	\begin{equation}\label{equ:equ2}
		\mbP(h_{z_i}^c < -t) \leq C\exp\lp -\frac{t^2}{2(\gamma\log K + C')}\rp,
	\end{equation}
	uniformly in $N$. Writing
	\begin{equation}
		\mbP(h_A^* - m_N < -t) \leq \mbP(h_{z_i}^f - m_{N/K} < -t/2 + 2\sqrt{\gamma}\log K) + \mbP(h_{z_i}^c < -t/2),
	\end{equation}
	and the results follow by \eqref{equ:equ1} and \eqref{equ:equ2}.
\end{proof}
Next, we present some auxiliary definitions as follows: given a non-empty open $A \subset [0,1]^4$, we define 
$V_{N, A}^{K, \delta} := \cup\{ V_N^{K, \delta, i}: B^{K, i} \subset A \}$ and 
$\Delta_{N, A}^{K, \delta} := \{ x\in V_N: x/N \in A \} \backslash V_{N, A}^{K, \delta}$. As $A$ is open, 
the Lebesgue measure of $A\backslash \cup \{B^{K, i}: B^{K, i} \subset A\}$ tends to $0$ as $K \rightarrow \infty$.
One also has 
\begin{equation}
	\lim_{\delta \downarrow 0}\limsup_{K\rightarrow \infty}\limsup_{N\rightarrow \infty}\frac{|\Delta_{N, A}^{K, \delta}|}{|V_N|} = 0.
\end{equation}
All the following three results can be similarly proved using technique for tightness.
\begin{proposition}
    Let $A$ be any open non-empty subset of $[0, 1]^4$. Then
    \begin{equation}
        \lim_{\delta \downarrow 0}\lim\sup_{K\rightarrow \infty}\lim\sup_{N\rightarrow \infty} \mbP\lp \max_{x\in V_{N, A}^{K, \delta}}h_x \neq h_A^*\rp = 0.
    \end{equation}
\end{proposition}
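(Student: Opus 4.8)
The plan is to show that, in the iterated limit $\delta\downarrow0$ after $K\to\infty$ after $N\to\infty$, the maximiser of $h$ over $\{x\in V_N:x/N\in A\}$ lies in the good region $V_{N,A}^{K,\delta}$ with probability tending to one. Decompose $\Delta_{N,A}^{K,\delta}=\Delta^{\mathrm{str}}\cup\Delta^{\partial}$, where $\Delta^{\mathrm{str}}:=\bigcup\{V_N^{K,i}\setminus V_N^{K,\delta,i}:B^{K,i}\subset A\}$ is the union of the interior $\delta$-strips of the sub-boxes entirely contained in $A$ and $\Delta^{\partial}$ is the remainder. Then $|\Delta^{\mathrm{str}}|/|V_N|\le 1-(1-2\delta)^4\to0$ as $\delta\downarrow0$, while $|\Delta^{\partial}|/|V_N|\to|A\setminus\bigcup\{B^{K,i}:B^{K,i}\subset A\}|\to0$ as $K\to\infty$. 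Moreover $\Delta^{\partial}$ can be removed from the discussion: approximating the open set $A$ from inside by a finite union $A'$ of boxes $B^{K_0,i}$ with $\overline{B^{K_0,i}}\subset A$, one has $\Delta^{\partial}=\varnothing$ for $A'$ once $K$ is a multiple of $K_0$, and $h_A^*=h_{A'}^*$ with probability tending to one as $K_0\to\infty$, because $A\setminus A'$ has small Lebesgue measure so that $\max_{\{x/N\in A\setminus A'\}}h$ is much smaller than $h_{A'}^*$ (by tightness of $h_{A'}^*-m_N$ together with \Cref{prop:tight}). So it suffices to control $\Delta^{\mathrm{str}}$.

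For the lower bound, since $A$ is open and non-empty, for all large $K$ there are of order $K^4$ good sub-boxes $V_N^{K,\delta,i}\subset V_{N,A}^{K,\delta}$; letting $z_i$ be the maximiser of the fine field $h^f$ on $V_N^{K,\delta,i}$ one has $\max_{V_{N,A}^{K,\delta}}h\ge\max_i\bigl(h^f_{z_i}+h^c_{z_i}\bigr)$, where the variables $h^f_{z_i}-m_{N/K}$ are i.i.d.\ and tight by \Cref{thm:convergence-in-law}, and $\Var(h^c_{z_i})\le g\log K+C'(\delta)$ by \Cref{prop:var-decomp}. A branching-random-walk-type computation, exactly as in the tightness estimates, shows that the maximum over these of order $K^4$ (almost independent) pieces is $m_N+O_A(1)$, thereby recovering the logarithmic deficit $m_N-m_{N/K}\sim 2\sqrt{2\gamma}\log K$; hence there is a constant $\lambda_0=\lambda_0(A)$ \emph{not depending on $K$ or $\delta$} with
\bequn
	\lim_{\delta\downarrow0}\limsup_{K\to\infty}\limsup_{N\to\infty}\mbP\bigl(\textstyle\max_{V_{N,A}^{K,\delta}}h< m_N-\lambda_0\bigr)=0 .
\eequn

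For the upper bound on the bad region we must show $\mbP(\max_{\Delta^{\mathrm{str}}}h\ge m_N-\lambda_0)\to0$ in the iterated limit. First peel off the part of $\Delta^{\mathrm{str}}$ lying within distance $N^{1-\varepsilon_0}/K$ of a sub-box boundary, for a fixed $\varepsilon_0\in(0,1)$: this layer has relative volume $O(N^{-\varepsilon_0})$, so the crude first moment $\sum_x\mbP(h_x\ge m_N-\lambda_0)$ taken over it is of order $N^{-\varepsilon_0}\log N\,e^{\pi\lambda_0}\to0$. On the complementary bulk of the strips the fine field has reduced variance, $\Var(h^f_x)\le g\log(\delta N/K)+O(1)$, so that the maximum of $h^f$ over the strips falls below its maximum over the good interiors by an amount of order $\log(1/\delta)$; transferring this bound to $h=h^f+h^c$ — using \Cref{prop:tight} to control the level-set size, \Cref{thm:4Dm-geo} to confine the near-maxima of $h$ to $O(1)$ clusters of bounded diameter (the threshold $\lambda_0$ being $K$-independent), and the same circle of ideas as in the tightness arguments to show that such clusters sit in the $\delta$-interiors of their sub-boxes — one concludes that with probability $1-O(\delta)$ none of these clusters meets $\Delta^{\mathrm{str}}$, i.e.\ the maximiser lies in $V_{N,A}^{K,\delta}$. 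Sending $N\to\infty$, then $K\to\infty$, then $\delta\downarrow0$ finishes the proof.

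The step requiring the most care is the behaviour of the coarse field $h^c$ near the boundaries of the sub-boxes: there $h^f$ essentially vanishes and $\Var(h^c_x)$ is as large as $g\log N$, so a priori nothing forbids the global maximum from sitting on a thick boundary strip. This is precisely the point at which a bare first-moment argument fails — the first moment of a level set of $h$ carries a diverging $\log N$ prefactor — and one must instead invoke the a-priori bound \Cref{prop:tight} on level-set sizes together with the local geometry \Cref{thm:4Dm-geo}. It is also the reason the ordering of the limits is indispensable: the constant $\lambda_0$ has to be chosen independently of $K$, which forces one to extract the deficit $m_N-m_{N/K}$ from the union over all good sub-boxes rather than from a single one.
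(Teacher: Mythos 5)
Your overall plan (split the bad set $\Delta_{N,A}^{K,\delta}$ into the interior $\delta$-strips and the part of $A$ not covered by sub-boxes, produce a $K$- and $\delta$-independent threshold $m_N-\lambda_0$ for the maximum over the good region, then show the bad set is unlikely to contain any point above $m_N-\lambda_0$) is the standard route, and it is consistent with what the paper intends (the paper omits the proof, indicating only that it follows by the fine-field/coarse-field tightness technique of the preceding lemma). However, at the decisive step your argument has a genuine gap, which you yourself flag and then do not close: ruling out a value above $m_N-\lambda_0$ inside the $\delta$-strips, where $h^f$ is small but $h^c$ has variance of order $g\log N$. The two inputs you invoke there cannot do this job: \Cref{prop:tight} bounds only the \emph{cardinality} of the level set $A_{N,\lambda}$, and \Cref{thm:4Dm-geo} constrains only the \emph{pairwise distances} of near-maximal points; neither says anything about \emph{where} these points sit relative to the sub-box boundaries. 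The claim that the $O(1)$ near-maximal clusters ``sit in the $\delta$-interiors of their sub-boxes'' is exactly the statement to be proved, so the appeal to ``the same circle of ideas as in the tightness arguments'' is circular at this point. Likewise, the reduced variance of $h^f$ in the strips controls only the fine field; transferring that to $h=h^f+h^c$ is precisely where the difficulty lies, since \Cref{prop:var-decomp} bounds $\Var(h^c_x)$ by $g\log K+C'(\delta)$ only for $x$ in the $\delta$-interior, not in the strips. The same missing input also underlies your reduction of $\Delta^{\partial}$ to an inner approximation $A'$: ``$A\setminus A'$ has small Lebesgue measure, hence its maximum is much smaller than $h_{A'}^*$'' is again a localization statement that tightness of $h_{A'}^*-m_N$ and \Cref{prop:tight} do not deliver.

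What is needed is a quantitative estimate of the form: for every fixed $\lambda$, $\mbP\lp \exists x\in\Delta_{N,A}^{K,\delta}:\ h_x\ge m_N-\lambda\rp\to 0$ in the iterated limit (equivalently, that regions of small relative volume, and in particular the strips of width $\delta N/K$, are unlikely to carry any $\lambda$-high point). In the GFF literature this is exactly the ``no near-maxima near sub-box boundaries'' lemma of Biskup--Louidor/Bramson--Ding--Zeitouni, and it is proved either by a first-moment computation with a barrier (entropic-repulsion) truncation, which removes the divergent $\log N$ prefactor you correctly identify, or by covering the strips by $O(K^4\delta^{-3})$ boxes of side $\delta N/K$, using the Gibbs--Markov decomposition at that scale together with a sharp upper tail of the form $C(1+y)e^{-\pi y}$ for the centered maximum of the membrane model on such a box (noting $m_{\delta N/K}\approx m_N-\tfrac{8}{\pi}\log(K/\delta)$), integrating over the coarse-field value, and taking a union bound. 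None of this appears in your write-up, so the proof does not close as written. A secondary, fixable point: your lower bound on the maximum of the good region via the $\sim K^4$ fine-field maxima ignores the downward fluctuations of $h^c$ over $K^4$ points, which are themselves of order $\log K$; it is cleaner to lower bound $h_A^*$ directly by the maximum over a single macroscopic box $B\subset A$ (Gibbs--Markov plus tightness at scale $cN$ gives $h_A^*\ge m_N-\lambda_0(A)$ with high probability, with $\lambda_0$ independent of $K$ and $\delta$), after which the whole proposition reduces to the strip estimate above.
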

\begin{proposition}\label{prop:aux}
    Let $A \subset [0, 1]^4$ be non-empty and open. Let $z_i$ be such that we have $\max_{x\in V_{N}^{K, \delta, i}}h_x^f = h_{z_i}$ 
	and let $\overline{z}$ be such that $\max\{ h_{z_i}: B^{K, i}\subset A\} = h_{\overline{z}}$. Then for all $\epsilon, \delta$
    \begin{equation}
        \lim_{K\rightarrow \infty}\lim\sup_{N\rightarrow \infty} \mbP\lp \max_{x\in V_{N, A}^{K, \delta}}h_x \geq h_{\overline{z}} + \epsilon\rp = 0.
    \end{equation}
    Furthermore, there exists a sequence $(B_K)_{K\geq 1}$ with $B_K \rightarrow \infty$ as $K \rightarrow \infty$ such that
    \begin{equation}
        \lim_{K\rightarrow \infty}\lim\sup_{N\rightarrow \infty} \mbP\lp h_{\overline{z}}^f \leq m_{N/K} + B_K\rp = 0.
    \end{equation}
\end{proposition}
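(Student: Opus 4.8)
\emph{Proof strategy.} We work with the fine/coarse decomposition $h = h^f + h^c$ and three facts already recorded: on distinct sub-boxes the fine fields $h^f$ are independent, and on each $V_N^{K,\delta,i}$ the field $h^f$ is a $4$D membrane model on a translate of $V_{N/K}$; on $V_{N,A}^{K,\delta}$ one has $\Var(h^c_x)\le g\log K + C_\delta$ by \Cref{prop:var-decomp}; and, for fixed $K$, as $N\to\infty$ the rescaled coarse field converges to the continuous Gaussian field $\Phi_K$ with covariance $g\log(1/|u-v|)+O(1)$, so that $\max_{x\in V_{N,A}^{K,\delta}}h^c_x$ is tight around $m_K=\tfrac{8}{\pi}\log K-\tfrac{3}{2\pi}\log\log K$, uniformly in $K$. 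In addition we use the cluster geometry of \Cref{thm:4Dm-geo}, applied both to $h$ on $V_N$ and, after rescaling, to each $h^f$ on its sub-box, together with \Cref{prop:tight} and the tightness of the centered maximum. Below, $o_N(1)$ denotes a quantity vanishing as $N\to\infty$ with $K$ fixed.

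\emph{First conclusion.} Fix $\epsilon>0$ and an auxiliary integer $r$ (sent to infinity last). Let $\mathcal{E}_{N,K,r}$ be the event on which $h_A^*\le m_N+C$, $\max_{x\in V_{N,A}^{K,\delta}}h^c_x\le m_K+C$, the near-maxima of $h$ on $V_N$ form clusters of diameter $\le r$, the analogous clustering holds for $h^f$ inside each sub-box (so $\{x:h^f_x\ge m_{N/K}-c\log\log r\}\subseteq\Lambda_r(z_i)$), and the coarse field oscillates by at most $\epsilon$ over every ball $\Lambda_r(x)$ with $x$ in the relevant level set; by \Cref{prop:tight}, the tightness of the centered maxima, \Cref{thm:4Dm-geo}, and the convergence of the rescaled coarse field to the continuous field $\Phi_K$, the probability of $\mathcal{E}_{N,K,r}$ tends to $1$ as $N\to\infty$ and then $r\to\infty$. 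On this event let $y$ realize $\max_{x\in V_{N,A}^{K,\delta}}h_x$ and let $i_0$ be its sub-box (so $B^{K,i_0}\subset A$). From $h_y\ge h_A^*-\epsilon\ge m_N-C$, the bound $h^c_y\le m_K+C$, and the identity $m_N-m_K=m_{N/K}+\tfrac{3}{2\pi}\log\log K+O(1)+o_N(1)$, we obtain $h^f_y\ge m_{N/K}-c\log\log r$ once $K$ is large, hence $y\in\Lambda_r(z_{i_0})$ provided we also rule out $y$ lying in a far-away secondary cluster of $h^f$ in that sub-box; this last exclusion follows from the right-tail estimates for the fine-field maximum together with the ceiling $h^c\le m_K+C$, via a second-moment count of clusters carrying an atypically large fine field, as in \cite{biskup2016extreme}. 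Since $z_{i_0}$ maximizes $h^f$ over its sub-box, $h^f_{z_{i_0}}\ge h^f_y$, and therefore
\bequn
	h_y-h_{z_{i_0}}\le h^c_y-h^c_{z_{i_0}}\le\epsilon
\eequn
on $\mathcal{E}_{N,K,r}$, the second inequality because $y,z_{i_0}\in\Lambda_r(z_{i_0})$. As $h_{z_{i_0}}\le\max_i h_{z_i}=h_{\bar z}$, sending $N\to\infty$, then $r\to\infty$, then $K\to\infty$ yields the first claim.

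\emph{Second conclusion.} By the first conclusion and tightness of $h_A^*$ we have $h_{\bar z}\ge h_A^*-\epsilon\ge m_N-C$ with probability tending to $1$, and on the same event $h^c_{\bar z}\le\max_{x\in V_{N,A}^{K,\delta}}h^c_x\le m_K+C$. Hence
\bequn
	h^f_{\bar z}=h_{\bar z}-h^c_{\bar z}\ge m_N-m_K-2C=m_{N/K}+\tfrac{3}{2\pi}\log\log K-O(1)+o_N(1),
\eequn
so the choice $B_K:=\tfrac{1}{\pi}\log\log K$ (which tends to infinity) gives $h^f_{\bar z}\ge m_{N/K}+B_K$ outside an event whose probability vanishes in the order $N\to\infty$, $K\to\infty$. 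What makes this work is that the coarse-field maximum carries the full leading order $\tfrac{8}{\pi}\log K=m_N-m_{N/K}+o_N(1)$ but only with a strictly negative $\tfrac{3}{2\pi}\log\log K$ correction---exactly as for the membrane-model maximum itself---leaving precisely that much room for the fine field to overshoot its own typical maximum.

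\emph{Main obstacle.} The crux is the first conclusion, namely that the global maximum of $h$ essentially sits atop a fine-field local maximum. Since $\Var(h^c_x)\approx g\log K$ while $m_N-m_{N/K}\approx\tfrac{8}{\pi}\log K$ and $\sqrt{2(g\log K)(4\log K)}=\tfrac{8}{\pi}\log K$, crude first-moment union bounds over the $K^4$ sub-boxes land exactly on the borderline, and one must track the sub-leading $\log\log$ corrections in $m_N$, $m_K$ and in the right tail $(1+y)e^{-\pi y}$ of the membrane-model maximum. Making the secondary-cluster exclusion and the accompanying coupling rigorous requires the modified second-moment computation and barrier estimates in the spirit of \cite{biskup2016extreme}, transported to the $4$D membrane model via the Gaussian comparison theorems of \Cref{sec:mBRW}.
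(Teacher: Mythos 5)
The paper itself does not spell out a proof here (it defers to the tightness technique and to the corresponding lemmas of \cite{biskup2016extreme}), and your outline does follow that intended fine/coarse-field strategy; the reduction $h_y-h_{z_{i_0}}\le h^c_y-h^c_{z_{i_0}}$ and the $\tfrac{3}{2\pi}\log\log K$ bookkeeping for the second display are the right ideas. However, the crux of the first display is exactly the step you defer: ruling out that the maximizer $y$ of $h$ over $V_{N,A}^{K,\delta}$ sits at a point of its box whose fine field lies in a cluster far from $z_{i_0}$ (or, more generally, at a point where a deficit in $h^f$ is compensated by an excess of $h^c$). A per-box estimate combined with a union bound over the $K^4$ boxes fails here, because the relevant per-box probabilities (two well-separated fine-field points above $m_{N/K}+c\log\log K$, coarse field larger at the lower one) are only polylogarithmically small in $K$; one must instead charge the event jointly with the cost of that box beating level $m_N-O(1)$, i.e.\ couple the fine-field upper tail $C(1+u)e^{-\pi u}$ with the Gaussian tail of $h^c$ (variance $\le g\log K+C_\delta$) so that the per-box bound is $o(K^{-4})$. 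That computation, together with \Cref{thm:4Dm-geo} applied inside each sub-box, is the actual content of the lemma; citing ``a second-moment count of clusters as in \cite{biskup2016extreme}'' does not discharge it.

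Two further inputs you use are not available as stated. First, you list as ``already recorded'' that $\max_{x\in V_{N,A}^{K,\delta}}h^c_x$ is tight around $\tfrac{8}{\pi}\log K-\tfrac{3}{2\pi}\log\log K$ uniformly in $N$; the paper only records the pointwise bound $\Var(h^c_x)\le g\log K+C$ (\Cref{prop:var-decomp}) and the fixed-$K$ convergence to $\Phi_K$, and the $-\tfrac{3}{2\pi}\log\log K$ correction is a Bramson-type upper bound for a depth-$\log K$ log-correlated field that must itself be proved (e.g.\ via the MBRW comparison of Section 3) — and your second conclusion collapses without it, since a crude union bound only gives $\max h^c\le\tfrac{8}{\pi}\log K+O(\log\log K)$, leaving no room for $B_K\to\infty$. (Alternatively one can avoid the correction altogether by the joint fine-tail/coarse-tail union bound just described, which yields the second display for any slowly growing $B_K$.) Second, the inequality $h_y\ge h_A^*-\epsilon$ that you use to place $y$ in the near-maximal level set is not justified at fixed $\delta$: the preceding proposition identifying the thinned maximum with $h_A^*$ requires $\delta\downarrow0$ after $K\to\infty$, whereas \Cref{prop:aux} is asserted for every fixed $\delta$. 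What is needed is a lower-tightness statement $\max_{x\in V_{N,A}^{K,\delta}}h_x\ge m_N-C(\delta)$ with $C(\delta)$ uniform in $K$, and this cannot be obtained from a single sub-box (that only gives $m_{N/K}+O_P(\sqrt{\log K})$); it requires the two-scale (coarse-near-max times fine-near-max) second-moment construction. Until these three points are supplied, the argument has genuine gaps precisely at the steps that make the proposition nontrivial.
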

\begin{lemma}
    For $i=1,...,K^4$, let $z_i$ be as in \Cref{prop:aux} and $z_i' \in V_{N}^{K, \delta, i}$ be measurable with respect to $\{h_x^f: x \in V_N^{K, i}\}$ such that $\lv z_i - z_i'\rv = o(N/K)$ as $N,K \rightarrow \infty$. Then for any $A \subset [0,1]^4$ open and non-empty 
    \begin{equation}
        \lim_{K\rightarrow \infty}\lim\sup_{N\rightarrow \infty} d\lp \max\{h_{z_i}^f + h_{z_i}^c: B^{K, i}\subset A\}, \max\{h_{z_i}^f + h_{z_i'}^c: B^{K, i}\subset A\}\rp = 0.
    \end{equation}
    where $d$ is the Levy metric for $\mbR$-valued random variables.
\end{lemma}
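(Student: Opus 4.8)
The plan is to reduce the statement to the fact that, after rescaling, the coarse field $h^c$ is asymptotically continuous at scale $N/K$, so that displacing its evaluation point by $o(N/K)$ changes its value by a negligible amount. First I would perform two deterministic reductions. Writing $a_i := h_{z_i}^f$, $b_i := h_{z_i}^c$, $c_i := h_{z_i'}^c$, the elementary inequality $\lv \max_i (a_i + b_i) - \max_i (a_i + c_i) \rv \leq \max_i \lv b_i - c_i \rv$ shows that the two random variables compared in the lemma differ pointwise by at most
\bequn
	D_N := \max\lbb \lv h_{z_i}^c - h_{z_i'}^c \rv : B^{K, i} \subset A \rbb,
\eequn
a maximum over the fixed finite index set $\{ i : B^{K, i} \subset A \}$ (of size at most $K^4$). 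Since $d(X, Y) \leq \epsilon$ for real random variables whenever $\mbP(\lv X - Y \rv > \epsilon) \leq \epsilon$, it suffices to show that for every fixed $K$ and every $\epsilon > 0$, $\limsup_{N \rightarrow \infty} \mbP( D_N > \epsilon ) = 0$.

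Next I would remove the randomness of the points $z_i, z_i'$. Both lie in $V_N^{K, \delta, i}$ (by \Cref{prop:aux} and by hypothesis), and since $\lv z_i - z_i' \rv = o(N/K)$ we have $\lv z_i - z_i' \rv \leq \rho N / K$ for any fixed $\rho > 0$ once $N \geq N_0(\rho, K)$; hence, for such $N$,
\bequn
	D_N \leq \Omega_N(\rho) := \max_{i : B^{K, i} \subset A} \ \max\lbb \lv h_x^c - h_y^c \rv : x, y \in V_N^{K, \delta, i}, \ \lv x - y \rv \leq \rho N/K \rbb.
\eequn
The functional $\Omega_N(\rho)$ depends only on $h^c$ restricted to the interiors of the sub-boxes (in particular no conditioning on $h^f$ is needed, although $h^c \perp h^f$ anyway), so it remains to prove that $\limsup_{N \rightarrow \infty} \mbP(\Omega_N(\rho) > \epsilon) \to 0$ as $\rho \downarrow 0$, for each fixed $K$ and $\epsilon$.

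This final point is where the regularity of the coarse field enters, and it is the only non-bookkeeping step. Rescaling the sub-boxes to unit size and invoking the already-recorded convergence of the rescaled coarse field, in the topology of uniform convergence on each of the finitely many compacts $\overline{B^{K, \delta, i}}$ with $B^{K, i} \subset A$, to the a.s.\ continuous Gaussian field $\Phi_K$: the modulus-of-continuity map $g \mapsto \sup\{ \lv g(x) - g(y) \rv : \lv x - y \rv \leq \rho \}$ is continuous on $C(\overline{B^{K, \delta, i}})$, so $\Omega_N(\rho)$ converges in law, as $N \rightarrow \infty$, to the maximum over these boxes of the corresponding $\rho$-moduli of $\Phi_K$; the latter tends to $0$ a.s.\ as $\rho \downarrow 0$ by continuity of $\Phi_K$, and the Portmanteau theorem gives $\limsup_N \mbP(\Omega_N(\rho) > \epsilon) \to 0$ as $\rho \downarrow 0$. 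Alternatively one can bypass the scaling limit: $h^c$ is a centered Gaussian field with $\Var(h_x^c - h_y^c) \leq c \lv x - y \rv K/N$ for $x, y$ in a common sub-box at distance $\leq \rho N/K$ (this is \Cref{prop:var-decomp} with inner box of side $N/K$ and outer box $V_N$, using that conditioning on $\mcF_{\p_2 V_N}$ is trivial), whence Dudley's entropy bound yields $\mbE \Omega_N(\rho) \leq C_K \sqrt{\rho}$ uniformly in $N$ and the Borell--TIS inequality upgrades this to the required tail bound. Either way, given $\epsilon > 0$ and $K$, choosing $\rho$ so small that $\limsup_N \mbP(\Omega_N(\rho) > \epsilon) \leq \epsilon$ gives $\limsup_N \mbP(D_N > \epsilon) \leq \epsilon$, and since $\epsilon$ is arbitrary this limsup is $0$ for each $K$ — more than the asserted iterated limit.

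The main obstacle is the uniform-in-$N$ small-scale regularity of the coarse field used in the last step: one needs either the $C$-valued convergence $h^c \Rightarrow \Phi_K$ together with the tightness it presupposes, or equivalently the increment-variance estimate of \Cref{prop:var-decomp}; both rest on the covariance analysis of the 4D membrane model already developed. Everything else — the deterministic maximum inequality, the Lévy-metric reduction, and the Gaussian concentration — is routine.
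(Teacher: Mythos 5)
Your argument is correct, and it is essentially the argument the paper itself points to: the paper omits this proof, deferring to the tightness/coarse-field regularity argument of Bramson--Zeitouni, which is exactly your reduction of the Lévy-distance bound to the pointwise inequality $\lv \max_i(a_i+b_i)-\max_i(a_i+c_i)\rv\leq\max_i\lv b_i-c_i\rv$ followed by control of the coarse-field oscillation at scale $o(N/K)$ inside the $\delta$-inset sub-boxes, via either the $C$-convergence to $\Phi_K$ or the increment-variance bound of \Cref{prop:var-decomp} plus chaining. The only minor point worth flagging is that \Cref{prop:var-decomp} is stated for increments relative to the center of the concentric box, so bounding $\Var(h_x^c-h_y^c)$ for arbitrary pairs in $V_N^{K,\delta,i}$ requires re-centering the box at one of the points (possible precisely because of the $\delta$-inset), a routine adjustment.
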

This lemma shows that small changes in the coordinate of the coarse field do not affect the resulting maxima and the proof is the 
same as \cite{bramson2012tightness} and hence omitted here.

\begin{proof}[Proof of \Cref{thm:aux}]
	For each $A_1, A_2, \cdots, A_m$ define
	\begin{equation}
		\overline{h}_{A_l}^*:= \max\{ h_{z_i}^f + h_{z_i}^c: B^{K, i} \subset A, h_{z_i}^f > m_{N/K} + b_K \}.
	\end{equation}
	Then for all $l = 1,\cdots,m$, we have $\mbP(\lv \overline{h}_{A_l}^* - h_{A_l}^*\rv > \epsilon) < \epsilon$.
	Denoting by $\nu_{N, \underline{A}}^{K, \delta}$ the law of $\lp \overline{h}_{A_l}^* \rp_{l=1}^m$, this shows
	$d(\nu_{N, \underline{A}}, \nu_{N, \underline{A}}^{K, \delta}) < \epsilon.$
	
	Next we employ the same coupling of $h$ with $\lp \mcP_i^K, Y_i^K, z_i^K \rp_{i=1}^{K^2}$, as in the original proof.
	We have the law of 
	\begin{equation}
		\lp \max\{ b_K + Y_i^K + h_{z_i^K}^c - (m_N - m_{N/K}): \mcP_i^K=1, z_i^K\in V_N^{K, \delta, i}, B_N^{K, i}\subset A_l \} \rp_{l=1}^m,
	\end{equation}
	to be denoted by $\overline{\nu}_{N, \underline{A}}^{K, \delta}$, satisfies $d^{(m)}(\overline{\nu}_{N, \underline{A}}^{K, \delta}, \nu_{N, \underline{A}}^{K, \delta}) < \epsilon$,
	for all $K, N$ large enough. Using the convergence of $h^c$ to $\Phi_K$ as $N\rightarrow \infty$ and triangle inequality, 
	we conclude that 
	\begin{equation}
		\lim_{\delta \downarrow 0}\lim\sup_{K \rightarrow \infty}\lim\sup_{N \rightarrow \infty}d^{(m)}(\mu_{N, \underline{A}}, \mu_{K, \delta, \underline{A}}) = 0,
	\end{equation}
	as desired. Finally, since $\mu_{K, \delta, \underline{A}}$ does not depend on $N$, this implies that 
	$\mu_{N, \underline{A}}$ is Cauchy and hence converges weakly to a limit.
\end{proof}

Now, using \Cref{thm:ppp} and \Cref{thm:joint-unique}, we can prove the uniqueness of the law of the full extremal process.

\begin{proof}[Proof of \Cref{thm:Poisson-limit}]
The proof relies on the fact that a probability distribution function is uniquely determined by its Laplace transform under certain conditions. For some $r_N$ with $r_N\rightarrow \infty$ and $r_N/N \rightarrow 0$ as
$N \rightarrow \infty$, let $\eta$ be a sub-sequential limit of $\eta_{N, r_N}$. Let $Z(dx)$ be the random measure so that
\eqref{equ:poisson-limit} holds and let $h_A^*$ denote the maximum of $h_x$ over $x \in V_N$ with $x/N \in A$. Note that in light
of \Cref{thm:joint-unique}, $h_A^* - m_N$ is tight. Given any collection $(A_1, \cdots ,A_m)$ of disjoint non-empty open
subsets of $[0,1]^4$ such that $Z(\p A_l) = 0$ a.s.\ for each $l = 1, \cdots ,m$, we then have
\begin{equation}\label{equ:laplace-transform}
	\mbE\lb \exp\lp -\alpha^{-1}\sum_{l=1}^m e^{-\alpha t_l}Z(A_l)\rp \rb = \lim_{N \rightarrow \infty} \mbP\lp h_{A_l}^* - m_N \leq t_l, l=1, \cdots, m\rp,
\end{equation} 
for any $t_1, \cdots , t_m \in \mbR$. (The convergence in \Cref{thm:joint-unique} ensures this for a dense set of $t_l$s; the
continuity of the left-hand side then extends this to all of Rm.)
By \Cref{thm:joint-unique} again, the right-hand side of \eqref{equ:laplace-transform} is the same for all subsequences and so this
proves the uniqueness of the law of integrals with respect to $Z(dx)$ of all positive simple functions on
open sets $A$ with $Z(\p A) = 0$ a.s. Using standard arguments this can be extended to the class of all
continuous functions on $[0,1]^4$. Hence, the law of $Z(dx)$ is itself unique.

The a.s.\ finiteness of the total mass of $Z$ arises (via the arguments in \Cref{thm:ppp}) from the
tightness of the upper tail of $\eta$. The fact that $Z(A) > 0$ a.s.\ for any $A \subset [0,1]^4$ open is a consequence
of the fact that $\eta(A\times \mbR) > 0$ a.s.\ for all such $A$. This follows from the Gibbs-Markov
property of the 4D membrane model and the fact that $A$ contains an open square.
\end{proof} 

\begin{remark}
	The proofs of both \Cref{thm:joint-unique} and \Cref{thm:Poisson-limit} can be extended to arbitrary domains $D$ beyond the unit box $[0,1]^4$, following the philosophy of \cite{biskup2017extrema}.
Moreover, we believe that the limiting measure in the membrane model admits an interpretation in terms of a derivative martingale, although we leave a detailed treatment of this for future work to maintain focus and clarity.
\end{remark}

\section*{Acknowledgement}
We thank Jian Ding for suggesting this problem, and Marek Biskup, Xin Sun, Yifan Gao, Runsheng Liu, and Haoran Yang for valuable discussions.
The authors are supported by National Key R\&D Program of China (No.\ 2021YFA1002700 and No.\ 2020YFA0712900) and National Natural Science Foundation of China (T2225001).

\bibliographystyle{abbrv}
\bibliography{main}
\end{document}